\theoremstyle{plain}
\newtheorem{thm}{\protect\theoremname}[section]
\theoremstyle{definition}
\newtheorem{defn}[thm]{\protect\definitionname}
\theoremstyle{plain}
\newtheorem{lem}[thm]{\protect\lemmaname}
\theoremstyle{plain}
\newtheorem{prop}[thm]{\protect\propositionname}
\theoremstyle{plain}
\newtheorem{cor}[thm]{\protect\corollaryname}
\theoremstyle{remark}
\newtheorem*{acknowledgement*}{Acknowledgement}
	 \renewcommand{\theenumi}{\emph{\roman{enumi})}} 
	\numberwithin{equation}{section}
\DeclareMathOperator{\Irr}{Irr}
\DeclareMathOperator{\Ind}{Ind}
\DeclareMathOperator{\Res}{Res}
\DeclareMathOperator{\res}{res}
\DeclareMathOperator{\Stab}{Stab}
\DeclareMathOperator{\Tr}{tr}
\DeclareMathOperator{\Ker}{Ker}
\DeclareMathOperator{\Hom}{Hom}
\DeclareMathOperator{\GL}{GL}
\DeclareMathOperator{\SL}{SL}
\DeclareMathOperator{\M}{M}
\newcommand\leftexp[2]{\hspace{1pt}{\vphantom{#2}}^{#1}\hspace{-1pt}{#2}}
\newcommand{\C}{\ensuremath{\mathbb{C}}}
\newcommand{\N}{\ensuremath{\mathbb{N}}}
\newcommand{\Q}{\ensuremath{\mathbb{Q}}}
\newcommand{\Z}{\ensuremath{\mathbb{Z}}}
\newcommand{\cB}{\ensuremath{\mathcal{B}}}
\newcommand{\cC}{\ensuremath{\mathcal{C}}}
\newcommand{\cD}{\ensuremath{\mathcal{D}}}
\newcommand{\cG}{\ensuremath{\mathcal{G}}}
\newcommand{\cH}{\ensuremath{\mathcal{H}}}
\newcommand{\cO}{\ensuremath{\mathcal{O}}}
\newcommand{\cT}{\ensuremath{\mathcal{T}}}
\newcommand{\cS}{\ensuremath{\mathcal{S}}}
\renewcommand{\bar}[1]{\mkern 1.5mu\overline{\mkern-1mu#1\mkern-1mu}\mkern 1.5mu}
\newcommand{\llbracket}{[\mkern-2mu [}
\newcommand{\rrbracket}{]\mkern-2mu ]}
\date{\today}
		\newcommand{\tpartial}[2]{\widetilde{Z}_{#1}^{#2}(s)}
		\newcommand{\lan}{\mathcal{L}}
		\newcommand{\struc}{\mathcal{M}}
		\newcommand{\Lan}{\lan_{\mathrm{an}}}
		\newcommand{\man}{\struc_{\mathrm{an}}}
		\newcommand{\eqrel}{\mathbin{\mathcal{E}}}
		\newcommand{\tuple}[1]{{\bm{\mathrm{#1}}}}
		\newcommand{\coho}[1]{\mathrm{H}^{#1}}
		\newcommand{\cocy}[1]{\mathrm{Z}^{#1}}
		\newcommand{\cobo}[1]{\mathrm{B}^{#1}}
		\newcommand{\onecocyp}{\widetilde{Z}_p}
		\newcommand{\onecobop}{\widetilde{B}_p}
		\newcommand{\stgroup}{K}
		\newcommand{\uniform}{N}
		\newcommand{\Kp}{{K_p}}
		\newcommand{\Lp}{{L_p}}
		\newcommand{\Gammap}{\Gamma_p}
		\newcommand{\indexPN}{r}
		\newcommand{\indexKpN}{{\indexPN}}
		\newcommand{\indexLpN}{{\indexPN'}}
		\newcommand{\indexLN}{{u'}}
		\newcommand{\indexGN}{{m}}
\newcommand{\twist}{\mathbin{\sim}} 
\newcommand{\Gtwist}{\mathrel{\sim_G}} 
\newcommand{\twirr}{\widetilde{\Irr}}
\newcommand{\twirrparams}{\twirr\vphantom{\Irr}^{c,c'}_{L,K,\Gamma}(N)}
\newcommand{\tic}[1]{\widetilde{#1}} 
\DeclareMathOperator{\Func}{Func} 
\DeclareMathOperator{\Lin}{Lin} 
\DeclareMathOperator{\twind}{\widetilde{\Ind}}  
\DeclareMathOperator{\PIrr}{PIrr} 
\newcommand{\ptwirr}{\widetilde{\PIrr}}
\providecommand{\corollaryname}{Corollary}
\providecommand{\definitionname}{Definition}
\providecommand{\lemmaname}{Lemma}
\providecommand{\propositionname}{Proposition}
\providecommand{\theoremname}{Theorem}
\newcommand{\thmMain}{Theorem~1.1}				
\newcommand{\corMain}{Corollary~1.2}				
\newcommand{\defMan}{Section~2.2}				
\newcommand{\lemInterpret}{Lemma~2.4}			
\newcommand{\thmRationalSeries}{Theorem~2.7}		
\newcommand{\defStrongExt}{Definition~3.1}			
\newcommand{\thmCliffordMap}{Theorem~3.3}			
\newcommand{\lemSameFs}{Lemma~3.4}			
\newcommand{\lemCliffordExt}{Lemma~3.6}			
\newcommand{\lemBasicCoho}{Lemma~3.8}			
\newcommand{\lemJaikinsProp}{Lemma~4.1}			
\newcommand{\secRedDegOne}{Section~5}			
\newcommand{\propRedLinear}{Proposition~5.2}		
\newcommand{\corSurjCoho}{Corollary~5.3}			
\newcommand{\subsecGoodBases}{Section~6.1}		
\newcommand{\defGoodBasis}{Definition~6.1}			
\newcommand{\propDc}{Proposition~6.9}				
\newcommand{\prerequisites}{Propositions 5.2 and 6.9}	
\begin{document}

\title[Rationality of twist zeta functions]{Rationality of twist representation zeta functions of compact $p$-adic analytic groups}

\author{Alexander Stasinski and Michele Zordan}
\address{Department of Mathematical Sciences, Durham University,
	Durham, DH1 3LE, UK}
\email{alexander.stasinski@durham.ac.uk}

\address{Department of Mathematics, Imperial College London, London SW7 2AZ, UK.}
\email{mzordan@imperial.ac.uk}
\subjclass[2020]{Primary 20E18; Secondary 20C15, 22E35, 20J06}

\begin{abstract}
We prove that for any twist rigid compact $p$-adic analytic group $G$, its
twist representation zeta function is a finite sum of terms $n_{i}^{-s}f_{i}(p^{-s})$,
where $n_{i}$ are natural numbers and $f_{i}(t)\in\Q(t)$ are
rational functions. Meromorphic continuation and rationality of the abscissa of the zeta function follow as corollaries. If $G$ is moreover a pro-$p$ group, we prove
that its twist representation zeta function is rational in $p^{-s}$. 
To establish these results we develop a Clifford theory for twist isoclasses of representations, including a new cohomological invariant of a twist isoclass.
\end{abstract}

\maketitle


\section{Introduction}

The representation zeta series of a group $G$ is the formal Dirichlet series
\[
Z_G(s)=\sum_{n=1}^{\infty}r_n(G)n^{-s}=\sum_{\rho\in \Irr(G)}\rho(1)^{-s},
\]
where $r_n(G)$ is the number of isomorphism classes of
(continuous, if $G$ is topological) complex irreducible $n$-dimensional
representations of $G$ (assumed to be finite for each $n$) and  $\Irr(G)$ denotes the set of irreducible characters of $G$. If the sequence $R_N(G)=\sum_{i=1}^{N}r_i(G)$ grows at most polynomially, $Z_G(s)$ defines a holomorphic function $\zeta_{G}(s)$ on some right half-plane of $\C$, which is called the representation zeta function of~$G$.
Representation zeta functions have been studied in a number of papers; see the references in \cite{Rationality1}.

A group $G$ is called representation rigid if the number $r_n(G)$ is finite, for each $n$. This holds for large families of groups, such as semisimple Lie groups, arithmetic groups and semisimple $p$-adic groups (see, for instance, \cite{Lubotzky-Martin} and \cite{Larsen-Lubotzky}). 
There are significant classes of groups that are not representation rigid, for example torsion-free nilpotent groups or reductive $p$-adic groups with infinitely many $1$-dimensional representations, like $\GL_n(\Z_p)$. These groups therefore do not possess a representation zeta function in the usual sense. Nevertheless, it turns out that in many cases the number $\widetilde{r}_n(G)$ of irreducible representations (up to isomorphism) of dimension $n$ up to one-dimensional twists (i.e., tensoring by one-dimensional representations) is finite for all $n$. We call such groups \emph{twist rigid}. Examples of twist rigid groups include finitely generated nilpotent groups and most (possibly all) reductive compact $p$-adic groups. The latter is work in progress of the authors joint with B.~Martin.
 For a twist rigid group $G$, one can define the Dirichlet series 
\[
\widetilde{Z}_G(s)=\sum_{n=1}^{\infty}\widetilde{r}_n(G)n^{-s}
\]
and its meromorphic continuation $\widetilde{\zeta }_G(s)$ (where it exists), called the \emph{twist (representation) zeta series/function}, respectively.

Following the terminology in \cite{Rationality1}, we say that a Dirichlet series with integer coefficients is \emph{virtually rational in
$p^{-s}$} if, as an element of $\Z\llbracket p_1^{-s},p_2^{-s},\dots\rrbracket$, where $p_1,p_2,\dots$ are the primes in $\N$, it is of the form
\begin{equation}
\sum_{i=1}^{k}n_{i}^{-s}f_{i}(p^{-s}),
\end{equation}
for some natural numbers $k$ and $n_{i}$ and rational functions $f_{i}(t)\in\Q(t)$.
If $Z_G(s)$ defines a zeta function $\zeta_{G}(s)$, we say that $\zeta_{G}(s)$ is virtually rational in $p^{-s}$ if $Z_G(s)$ is. When speaking informally, we will often say that a zeta series/function is (virtually) rational, that is, omitting the specification ``in $p^{-s}$''.

In \cite{Stasinski-Voll-Tgrps} the first author and Voll proved rationality of the local factors of twist zeta functions of torsion-free finitely generated nilpotent groups associated with certain group schemes 
when a suitable Kirillov orbit method can be applied and in
 \cite{hrumar2015definable} Hrushovski, Martin and Rideau 
proved (among other things) rationality of local factors of twist representation zeta functions for all finitely generated nilpotent groups. Examples of these zeta functions have been computed in \cite{Stasinski-Voll-Tgrps,zor2022univariate}, and their abscissae of convergence and analytic properties  have been investigated in \cite{dunvol2017uniform}.

The study of twist representation zeta functions of compact $p$-adic groups was initiated by the first author and H\"as\"a in \cite{Hasa-Stasinski}, who proved in particular that 
$\GL_n(\cO)$ is twist rigid (where $\cO$ is any compact discrete valuation ring) and explicitly computed the twist zeta function of $\GL_2(\cO)$ when $2$ is a unit in $\cO$. 
When the characteristic of $\cO$ does not divide 
$n$, Stasinski and H\"as\"a also proved that the abscissa of 
convergence of $\widetilde{\zeta }_{\GL_n(\cO)}$ coincides with the abscissa of convergence of $\zeta_{\SL_n(\cO)}$ (\cite[Proposition~3.4]{Hasa-Stasinski}). 
Computing the latter abscissae is an active area of research:
see, for example, the work by Larsen and Lubotzky \cite{Larsen-Lubotzky}, by Aizenbud and Avni \cite{Aizenbud-Avni-2016}, by Budur \cite{bud2021rational} for lower and upper bounds 
and the work by Avni, Klopsch, Onn, and Voll \cite{AKOV-Duke, AKOV-GAFA,AKOV-ProcLMS} and by Zordan \cite{zor2016adjoint} for computations in the cases $n = 2,3$.

Unlike the pro-$p$ completions of finitely generated nilpotent groups, 
compact $p$-adic groups do not have rational twist zeta function, in general. However, in the present paper we prove:

\begin{thm}
	\label{thm:Main-twist}Let $G$ be a twist rigid compact $p$-adic analytic group.
	Then $\widetilde{\zeta}_{G}(s)$ is virtually rational in $p^{-s}$. If in addition
	$G$ is pro-$p$, then $\widetilde{\zeta}_{G}(s)$ is rational in $p^{-s}$.
\end{thm}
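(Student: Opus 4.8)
The plan is to reduce the computation of $\widetilde{\zeta}_{G}(s)$ to a finite sum of Dirichlet series that are definable in the analytic language of \defMan, and then to invoke the rationality criterion \thmRationalSeries. The reduction runs through the Clifford theory for twist isoclasses developed in Sections~3--5, applied relative to a suitably chosen open normal pro-$p$ subgroup, combined with the orbit-method description recalled in \lemJaikinsProp\ and the explicit parametrisations of Section~6.

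First I would fix an open normal pro-$p$ subgroup $N\trianglelefteq G$ small enough that the Kirillov orbit method applies to $N$ (for instance $N$ uniform, or saturable); then $Q\coloneqq G/N$ is finite. Each twist isoclass in $\twirr(G)$ lies over a $G$-orbit of twist isoclasses in $\twirr(N)$, and the Clifford map of \thmCliffordMap\ identifies the twist isoclasses over a fixed $\twtheta\in\twirr(N)$ with triples consisting of a subgroup of $Q$ (a twist-stabiliser), a class of the cohomological invariant of \lemBasicCoho\ in a suitable $\coho{2}$, and a twist isoclass of projective representations of the associated subquotient of $Q$ with the prescribed cocycle; the degree of the corresponding character of $G$ is the product of $\theta(1)$, an index dividing $[G:N]$, and the degree of the projective representation. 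As $Q$ is finite, only finitely many subgroups and cohomology classes occur, so $\widetilde{Z}_G(s)$ is a finite sum, over these ``types'', of series of the shape $d^{-s}\sum_{\twtheta} r(\twtheta)^{-s}\theta(1)^{-s}$, with $d$ the (fixed) degree of an irreducible projective character of a subquotient of $Q$ and $r(\twtheta)$ the relevant index.

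Next I would carry out the degree-one reduction of Section~5: \propRedLinear\ and \corSurjCoho\ show that the cohomological obstruction to extending a twist isoclass can be trivialised, reducing each ``type'' to the problem of counting twist isoclasses of $N$ with prescribed twist-stabiliser and index. To see that the resulting series are definable I would use \lemJaikinsProp\ to express $\twirr(N)$, the $G$-action on it, the twist-stabilisers, and the degrees $\theta(1)$ in terms of (twist classes of) coadjoint orbits on the $\Zp$-Lie lattice of $N$, and then the good bases of \subsecGoodBases\ (culminating in \propDc) to present the set of relevant orbits together with the index function and the degree function as a definable family over $\Zp$ in the structure $\man$. Applying \thmRationalSeries\ to each piece then shows that $\widetilde{\zeta}_{G}(s)$ is virtually rational in $p^{-s}$. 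If moreover $G$ is pro-$p$, then $Q$ is a finite $p$-group, so every irreducible projective character of a subquotient of $Q$ has $p$-power degree, and every index and every $\theta(1)$ occurring above is a power of $p$; hence all the integers $n_i$ in the virtually rational expression are powers of $p$, and the expression is a genuine rational function of $p^{-s}$.

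I expect the principal difficulty to lie in Section~3, namely in making Clifford theory work at the level of twist isoclasses. Passing from isomorphism classes to twist isoclasses breaks the usual description of the fibres of the Clifford correspondence by a single $\coho{2}$-obstruction together with projective representations, since twisting interacts non-trivially with the extension problem. The crux is therefore to introduce a replacement invariant (the class of \lemBasicCoho), to show that it genuinely parametrises the fibres of the Clifford map and behaves well under the operations used in the reduction (\defStrongExt, \lemSameFs, \lemCliffordExt), and --- most delicate of all --- to arrange, via \propRedLinear\ and the good bases of \propDc, that this invariant and all the accompanying data vary definably in $\man$. The remaining ingredients are either standard inputs (\lemJaikinsProp, \thmRationalSeries) or bookkeeping over the finite group $Q$.
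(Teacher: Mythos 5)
Your overall template — pass to an open normal pro-$p$ (ultimately uniform) subgroup $N$, run Clifford theory, express everything definably, invoke \thmRationalSeries\ — matches the paper's architecture. But two of your load-bearing ingredients are wrong, and the gap they leave is precisely the content the paper has to invent.

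First, neither this paper nor \cite[\thmMain]{Rationality1} uses the Kirillov orbit method. You propose to express $\twirr(N)$, the $G$-action, the twist-stabilisers and the degrees via coadjoint orbits on the $\Zp$-Lie lattice of $N$ and attribute this to \lemJaikinsProp. That citation does nothing of the kind: it is a statement about the restriction map on $\coho{2}$ being injective on a $p$-primary part, used here only to pass from $\cC_{K_p}$ to $\cC_K$. The actual route to definability is \propRedLinear\ and \corSurjCoho: every $\theta\in\Irr_K(N)$ is induced from a degree one character $\chi$ of $N\cap H$ for some open $H$ with $HN=K_p$, and good bases (\defGoodBasis, \propDc) encode the pair $(H,\chi)$ by $\Zp$-coordinates. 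The entire Section~5/6 machinery is monomial, not orbit-theoretic, and the substitution matters because the orbit method for $G$-conjugation and $G$-twisting is not even on the table here.

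Second, and more seriously, you have misidentified the key new invariant. You say it is ``the class of \lemBasicCoho\ in a suitable $\coho{2}$'', but \lemBasicCoho\ is a utility lemma on torsion and restriction in group cohomology, and a $\coho{2}$ class alone is exactly what does \emph{not} suffice here. The structural obstruction the paper must overcome is that, unlike the classical Clifford correspondence $\Irr(H\mid\theta)\to\Irr(H'\mid\theta)$, the induced map on $G$-twist classes $\twind_H^{H'}$ is surjective but in general \emph{not injective}. To control its fibres the paper constructs a genuinely new invariant
\[
\cT_{L,K,\Gamma}(\tic{\theta})\in\coho{1}(L/N,\,F_K/\Gamma),
\]
built from the $1$-cocycle $g\mapsto\bar{\mu}(gN)$ recording how a strong extension $\hat{\theta}$ transforms, up to twists by $\Lin(G)$ and up to the subgroup $\Gamma=\Gamma_{K,\tic{\theta}}\le\Lin(K/N)$, under $L$-conjugation (Proposition~\ref{prop:function-mu-cohomology}). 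The whole of Section~\ref{sec:Reduction-to-pro-p_twisted_case} is devoted to reducing this $\coho{1}$-invariant to pro-$p$ Sylow data $(\cC_{K_p},\cT_{L_p,K_p,\Gamma_p})$ (Proposition~\ref{prop:red_coeff_to_Sylow}), and Section~\ref{sec:Reduction partial twist} shows these two invariants determine the Dirichlet polynomial $\widetilde f_{(L,N,\tic\theta)}(s)$ (Proposition~\ref{prop:Analogue-Jaikin}), which is what lets one decompose $\widetilde Z_G(s)$ into finitely many partial series indexed by $(L,K,\Gamma,c,c')$. Section~\ref{sec:rationality_partial_tw} then shows the predicate $\cT_{L_p,K_p,\Gamma_p}(\tic\theta)=c'$ is definable (Proposition~\ref{prop:Linearisation_twist} and Proposition~\ref{pro:X_definable_tw}). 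Your sketch neither names the non-injectivity problem nor produces the $\coho{1}$-with-module-coefficients invariant that resolves it; without that, the ``finite sum over types'' you write down does not control the degrees and multiplicities, and the reduction to \thmRationalSeries\ does not go through.

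Your final observation — that for pro-$p$ $G$ all indices and degrees are $p$-powers, so virtual rationality upgrades to rationality — is correct and matches the last paragraph of the paper's proof of Proposition~\ref{prop:partial-Main-twist}.
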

By the same argument as for \cite[\corMain]{Rationality1}, this theorem has the following consequences.
\begin{cor}\label{cor:Main-twist}
	Let $G$ be a twist rigid compact $p$-adic analytic group. Then the following holds regarding $\widetilde{\zeta}_{G}(s)$:
	\begin{enumerate}
		\item it extends meromorphically to the whole complex plane,
		\item it has an abscissa of convergence which is a rational number.
	\end{enumerate}
\end{cor}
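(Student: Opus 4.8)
The plan is to deduce Corollary~\ref{cor:Main-twist} from Theorem~\ref{thm:Main-twist} by the standard analytic arguments, as in \cite[\corMain]{Rationality1}. First I would use Theorem~\ref{thm:Main-twist} to fix an expression
\[
\widetilde{Z}_{G}(s)=\sum_{i=1}^{k}n_{i}^{-s}f_{i}(p^{-s}),\qquad n_i\in\N,\quad f_i(t)\in\Q(t),
\]
with each $f_i$ regular at $t=0$. Since each $f_i(p^{-s})$ is then a power series in $p^{-s}$ converging on some right half-plane, so is $\widetilde{Z}_{G}(s)$, and hence its abscissa of convergence $\alpha$ is not $+\infty$. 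I would dispose at once of the case where $\widetilde{\zeta}_{G}(s)$ is entire, which is trivial; otherwise infinitely many of the non-negative integers $\widetilde{r}_n(G)$ are non-zero (else $\widetilde{Z}_{G}(s)$ is a Dirichlet polynomial), so the series diverges at $s=0$ and $\alpha\ge 0$. Thus $\alpha\in[0,\infty)$.

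For part (i), I would show that each summand $n_i^{-s}f_i(p^{-s})$ is meromorphic on all of $\C$: the factor $n_i^{-s}$ is entire, and $f_i(p^{-s})$ is the composition of the rational function $f_i$ with the entire, nowhere-vanishing map $s\mapsto p^{-s}$, hence meromorphic, its poles being, for each pole $t_0$ of $f_i$, the vertical arithmetic progression of solutions of $p^{-s}=t_0$. A finite sum of meromorphic functions being meromorphic, $\widetilde{\zeta}_{G}(s)$ extends meromorphically to $\C$; in particular its only singularities are poles.

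For part (ii), I would first invoke Landau's theorem: as the coefficients $\widetilde{r}_n(G)$ are non-negative, $s=\alpha$ is a singular point of $\widetilde{\zeta}_{G}(s)$, hence by (i) a pole. Every pole is a point $s$ at which a common denominator $h(t)\in\Q[t]$ of the $f_i$ vanishes at $t=p^{-s}$. At this stage I would call on the sharper form of Theorem~\ref{thm:Main-twist} that emerges from its proof: the $f_i$ can be chosen with $h(t)$ a product of polynomials $1-p^{a}t^{b}$ ($a\in\Z$, $b\in\N_{>0}$) --- this being the shape of the $p$-adic and definable integrals underlying the rationality proof. Since $1-p^{a}t^{b}$ vanishes at $t=p^{-s}$ exactly when $\mathrm{Re}(s)=a/b$, every pole of $\widetilde{\zeta}_{G}(s)$ has rational real part, and in particular $\alpha\in\Q$.

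The only point requiring care --- and the one I would flag, slight as it is --- is that abstract virtual rationality does \emph{not} by itself force a rational abscissa: for instance $\sum_{m\ge0}3^{m}p^{-ms}=(1-3p^{-s})^{-1}$ is virtually rational in $p^{-s}$ but has abscissa $\log_p 3\notin\Q$. So the argument genuinely depends on carrying over, from the proof of Theorem~\ref{thm:Main-twist}, the information that the denominators of the $f_i$ are assembled from factors $1-p^{a}t^{b}$; with that in hand, the reasoning is the same as in \cite[\corMain]{Rationality1}.
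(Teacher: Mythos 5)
Your proof is correct and follows the same route the paper intends (the paper simply defers to the argument of \cite[\corMain]{Rationality1}, which proceeds exactly as you describe: meromorphy of each summand $n_i^{-s}f_i(p^{-s})$, Landau's theorem to make $\alpha$ a pole, and rational real parts of the poles). You are right, and it is worth flagging, that part (ii) genuinely requires the extra information, extracted from the \emph{proof} of Theorem~\ref{thm:Main-twist} (ultimately from the model-theoretic rationality theorem applied to the partial twist zeta series) rather than from its bare statement, that the denominators of the $f_i$ are products of polynomials $1-p^{a}t^{b}$; your counterexample $(1-3p^{-s})^{-1}$ correctly shows that virtual rationality in $p^{-s}$ alone, even with non-negative integer coefficients, does not force a rational abscissa.
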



In \cite{Rationality1}, we proved the analogous results (cf.\ \cite[\thmMain\ and \corMain]{Rationality1}) 
for the representation zeta functions of representation rigid groups (a different proof, for $p\neq 2$, had been given earlier by Jaikin-Zapirain \cite{Jaikin-zeta}). 
Although the proof of Theorem~\ref{thm:Main-twist} relies in part on results that were originally proved in \cite{Rationality1} (namely \cite[\prerequisites]{Rationality1}), the present case of twist zeta functions is substantially more difficult and requires several new ideas. 

Section~\ref{sec:Twist-iso-Clifford} defines and studies restriction and induction of what we call $G$-twist classes of characters in the presence of a normal subgroup. Here we let $G$ be an arbitrary profinite group and $N$ a normal subgroup of finite index. For any subgroup $H$ of $G$, we say that $\lambda,\delta\in\Irr(H)$
are \emph{$G$-twist equivalent}	if $\lambda=\delta\psi|_{H}$, for some character $\psi$ of $G$ of degree one (see Definition~\ref{def:G-twist}). 

Let now $H$ and $H'$ be subgroups of $G$ such that $H\leq H'$ and such that $H$ contains the stabiliser in $G$ of some $\theta \in \Irr(N)$. Then the Clifford correspondence says that induction gives a bijection between irreducible characters of $H$ lying over $\theta$ and irreducible characters of $H'$ lying over $\theta$. As we will see, this immediately implies that induction of $G$-twist classes is a surjective map. However, in contrast to the classical Clifford correspondence, induction of $G$-twist classes is not necessarily injective. It is for this reason that our proof of Theorem~\ref{thm:Main-twist} requires new methods in addition to those used in the proof of \cite[\thmMain]{Rationality1}.

The main new ingredient needed is an invariant $\cT_{L,K,\Gamma}(\tic{\theta})$ attached to the $G$-twist class $\tic{\theta}$ of a $\theta \in \Irr(N)$ that controls precisely when two $G$-twist classes induce to the same $G$-twist class. This invariant is an element in the group cohomology
\[
	\coho{1}(L/N,F_K/\Gamma),
\]
where $L$ is the stabiliser of $\tic{\theta}$ in $G$, $K$ is the stabiliser of $\theta$ in $G$ (so that $K\trianglelefteq L$), $F_K$ is the set of functions $K/N\rightarrow \C^{\times}$, 
$\Gamma$ is a certain subgroup of $\Hom(K/N,\C^{\times})$ and the action of $L/N$ on $F_K/\Gamma$ is the co-adjoint action (see Section~\ref{subsec:The-function-bar-mu} for the definitions).
We give a quick idea of how $\cT_{L,K,\Gamma}(\tic{\theta})$ is defined.  By definition of $L$, any $g\in L$ fixes $\theta$ up to $G$-twist, that is, for any $g\in L$ there is a character $\psi_g$ of $G$ of degree one such that $\leftexp{g}{\theta}=\theta\psi_{g}|_{N}$. Now let $\hat{\theta}$ be a projective character (i.e., the character of a projective representation) 
of $K$ strongly extending $\theta$ (see \cite[\defStrongExt]{Rationality1}). Then both $\leftexp{g}{\hat{\theta}}$ and $\hat{\theta}\psi_{g}|_{K}$ strongly extend $\leftexp{g}{\theta}$, so there exists a function $\mu(g):K/N \rightarrow \C^{\times}$ such that
\[
\leftexp{g}{\hat{\theta}}=\hat{\theta}\psi_{g}|_{K}\cdot\mu(g).
\]
The goal of Section~\ref{sec:Twist-iso-Clifford} is then to prove that the function $g \mapsto \mu(g)$ gives rise to a unique element in $\coho{1}(L/N,F_K/\Gamma)$, where the ambiguity in the choice of strong extension $\hat{\theta}$ has been accounted for by quotienting out by $1$-coboundaries, and the the ambiguity in the choice of $\psi_g$ has been accounted for by quotienting out by $\Gamma$. At the same time, it is shown that the resulting cohomology class only depends on the class $\tic{\theta}$, and not on the choice of representative $\theta$.

The next step, carried out in Section~\ref{sec:Reduction-to-pro-p_twisted_case}, is to show that the invariant $\cT_{L,K,\Gamma}(\tic{\theta})$ is determined by
$\cC_{K_p}(\tic{\theta})$ together with
$\cT_{L_p,K_p,\Gamma_p}(\tic{\theta})$. Here $\cC_{K_p}$ is a function with values in $\coho{2}(K_p/N,\C^{\times})$ defined in \cite{Rationality1} but here considered as a function on $G$-twist classes, $L_p$ and $K_p$ are pro-$p$ Sylow subgroups of $L$ and $K$, respectively and $\Gamma_p$ is the image of $\Gamma$ under restriction of homomorphisms to $K_p/N$. Here it is assumed that $N$ is a normal pro-$p$ 
subgroup of $G$ but eventually $N$ will be a uniform subgroup. The reasons for reducing to pro-$p$ Sylow subgroups is the same as for the reduction of $\coho{2}(K/N,\C^{\times})$ to $\coho{2}(K_p/N,\C^{\times})$ 
 in the proof of \cite[\thmMain]{Rationality1}, but in the  twist zeta setting considered in the present paper, the reduction is more complicated and uses very different arguments.

In Section~\ref{sec:Reduction partial twist} we use the main result of the previous section (Proposition~\ref{prop:red_coeff_to_Sylow}) to prove that Theorem~\ref{thm:Main-twist} follows from the rationality of the partial twist zeta series $\tpartial{N;L, K, \Gamma}{c, c'}$. Finally, Section~\ref{sec:rationality_partial_tw} proves rationality of the partial twist zeta series by, among other things, showing that the condition $\cT_{L_p,K_p,\Gamma_p}(\tic{\theta})=c'$, for $c'\in \coho{1}(L/N,F_K/\Gamma)$, can be expressed as a definable condition on a suitable definable set.

\section{Twist classes and Clifford theory\label{sec:Twist-iso-Clifford}}

From now on and throughout the rest of this paper, we will develop
results that will lead up to the proof of Theorem~\ref{thm:Main-twist}. 
The main goal of the present section is to define a cohomology class $\cT_{L,K,\Gamma}(\tic{\theta})$ 
attached to a twist class $\tic{\theta}$ of $N$. In the following section, 
we will show that $\cT_{L,K,\Gamma}(\tic{\theta})$ controls the number of 
$G$-twist classes of $L$ lying above a given $\tic{\theta}$. In this sense, 
the function $\cT_{L,K,\Gamma}$ can be thought of as an analogue of the 
function $\cC_K$ introduced  in our previous joint work:

\begin{thm}[{\cite[Theorem~3.3]{Rationality1}}]
\label{thm:Clifford-map}Let $\Theta$ be an irreducible representation
of $N$ fixed by $K\leq G$. There exists a projective representation $\Pi$
of $K$ which strongly extends $\Theta$. Let $\hat{\alpha}$ be the
factor set of $\Pi$. Then $\hat{\alpha}$ is constant on cosets in
$K/N$, so we have a well-defined element $\alpha\in \cocy{2}(K/N)$
given by
\[
\alpha(gN,hN)=\hat{\alpha}(g,h).
\]
Moreover, we have a well-defined function 
    \[
	\cC_{K}:\{\theta\in\Irr(N)\mid K\leq \Stab_G(\theta)\}\longrightarrow \coho{2}(K/N),\qquad\cC_{K}(\theta)=[\alpha].
    \]
\end{thm}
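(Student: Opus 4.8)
The plan is to construct a strong extension of $\Theta$ by hand from intertwining operators, read off its factor set, verify by a short computation that this factor set is constant on cosets of $N$, and finally check that the resulting class in $\coho{2}(K/N)$ is insensitive to every choice made along the way. First I would realise the character $\theta$ by a concrete irreducible representation $\Theta\colon N\to\GL(V)$. Since $K\leq\Stab_G(\theta)$, for each $g\in K$ the conjugate representation $\leftexp{g}{\Theta}\colon n\mapsto\Theta(gng^{-1})$ is isomorphic to $\Theta$, so there is $P_g\in\GL(V)$ with $\Theta(gng^{-1})=P_g\Theta(n)P_g^{-1}$ for all $n\in N$, unique up to a scalar by Schur's lemma. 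Fixing a transversal $T$ of $N$ in $K$ with $1\in T$, setting $P_1=\Id$, choosing $P_t$ as above for $t\in T\setminus\{1\}$, and defining $\Pi(tn)=P_t\Theta(n)$ for $t\in T$ and $n\in N$, a direct check shows that $\Pi(g)$ intertwines $\Theta$ with $\leftexp{g}{\Theta}$ for every $g\in K$. Consequently $\Pi(g)\Pi(h)$ and $\Pi(gh)$ both intertwine $\Theta$ with $\leftexp{gh}{\Theta}$, so by Schur's lemma there is a unique $\hat\alpha(g,h)\in\C^{\times}$ with $\Pi(g)\Pi(h)=\hat\alpha(g,h)\Pi(gh)$; thus $\Pi$ is a projective representation of $K$, clearly with $\Pi|_N=\Theta$ and with $\Pi(gn)=\Pi(g)\Theta(n)$, $\Pi(ng)=\Theta(n)\Pi(g)$ for $n\in N$. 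It remains at this stage to confirm that these properties (together with any further normalisation clauses) are exactly what \cite[\defStrongExt]{Rationality1} asks of a strong extension.

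Next, given any strong extension $\Pi$ with factor set $\hat\alpha$, I would show $\hat\alpha$ is constant on $N$-cosets by evaluating products two ways. Using $\Pi(hn)=\Pi(h)\Theta(n)$ and $\Pi(gh)\Theta(n)=\Pi(ghn)$ one gets $\Pi(g)\Pi(hn)=\hat\alpha(g,h)\Pi(ghn)$, hence $\hat\alpha(g,hn)=\hat\alpha(g,h)$; symmetrically $\Pi(ng)\Pi(h)=\Theta(n)\Pi(g)\Pi(h)=\hat\alpha(g,h)\Pi(ngh)$ gives $\hat\alpha(ng,h)=\hat\alpha(g,h)$, and since $N$ is normal every element of $gN$ has the form $n'g$ with $n'\in N$, so in fact $\hat\alpha(gn,h)=\hat\alpha(g,h)$ for all $n\in N$ as well. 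Combining these, $\hat\alpha(g,h)$ depends only on $gN$ and $hN$, so $\alpha(gN,hN)\coloneqq\hat\alpha(g,h)$ is well defined; and since $\hat\alpha$ — being the factor set of a projective representation — satisfies the $2$-cocycle identity on $K$ with trivial coefficients, the same identity for $\alpha$ on $K/N$ follows at once, i.e.\ $\alpha\in\cocy{2}(K/N)$.

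Finally I would check independence of $[\alpha]\in\coho{2}(K/N)$ from the choices. If $\Pi,\Pi'$ are two strong extensions of the \emph{same} $\Theta$, then $\Pi'(g)=\mu(g)\Pi(g)$ for a scalar $\mu(g)\in\C^{\times}$ by Schur's lemma; from $\Pi'|_N=\Pi|_N$ we get $\mu|_N\equiv1$, and from $\Pi'(gn)=\Pi'(g)\Theta(n)$ we get $\mu(gn)=\mu(g)$, so $\mu$ descends to $\bar{\mu}\colon K/N\to\C^{\times}$; then $\hat\alpha'(g,h)=\hat\alpha(g,h)\mu(g)\mu(h)\mu(gh)^{-1}$ shows $\alpha'$ and $\alpha$ differ by the coboundary of $\bar{\mu}$, so $[\alpha']=[\alpha]$. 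This in particular absorbs the choices of transversal $T$ and of the operators $P_t$. Replacing $\Theta$ by an isomorphic representation $A\Theta(\cdot)A^{-1}$ replaces $\Pi$ by the strong extension $A\Pi(\cdot)A^{-1}$, which has the very same factor set, so $[\alpha]$ depends only on the character $\theta\in\Irr(N)$, and $\theta\mapsto[\alpha]$ is the asserted well-defined function $\cC_K$. The step I expect to be the real obstacle is the first one: manufacturing a bona fide strong extension and matching it to the precise (and rather rigid) list of conditions in \cite[\defStrongExt]{Rationality1} — especially the normalisations amounting to $\hat\alpha(g,n)=\hat\alpha(n,g)=1$ for $n\in N$ — rather than merely obtaining a projective representation whose factor set is cohomologous to one inflated from $K/N$; once those normalisations are in place, both the descent of the cocycle and the independence of its class are formal.
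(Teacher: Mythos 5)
Your argument is correct and is essentially the standard proof of this result (which the present paper cites from \cite[Theorem~3.3]{Rationality1} rather than re-proving). Constructing $\Pi$ from Schur intertwiners $P_t$ indexed by a transversal, reading off the factor set via Schur's lemma, and using the identities $\Pi(gn)=\Pi(g)\Theta(n)$ and $\Pi(ng)=\Theta(n)\Pi(g)$ (equivalently $\hat\alpha(g,n)=\hat\alpha(n,g)=1$ for $n\in N$) to descend $\hat\alpha$ to a $2$-cocycle on $K/N$, is exactly the intended route, and your verification that the resulting class is independent of the choice of strong extension and of the realisation of $\theta$ is complete. The only point worth flagging is cosmetic: you take $\leftexp{g}{\Theta}(n)=\Theta(gng^{-1})$ whereas the paper uses $\leftexp{g}{\Theta}(n)=\Theta(g^{-1}ng)$; this is a harmless change of convention and does not affect the argument.
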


Unlike in \cite{Rationality1}, where only $\cC_K$ was used, we will need to use
also $\cT_{L,K,\Gamma}(\tic{\theta})$ to establish Theorem~\ref{thm:Main-twist}. 

Throughout the current section, we let $G$ be an arbitrary profinite group. 
We say that two irreducible continuous complex representations $\rho,\sigma$ of $G$ are \emph{twist equivalent} if there exists
a one-dimensional representation $\psi$ of $G$ such that $\rho\otimes\psi\cong\sigma$.
This equivalence relation partitions the set of irreducible representations
of $G$ into \emph{twist isoclasses}. Let $\Lin(G)$ denote the set of characters in $\Irr(G)$ of degree one, that is, the linear continuous characters of $G$. We say that $\lambda,\delta\in\Irr(G)$ are \emph{twist equivalent} or lie in the same \emph{twist class} if $\lambda=\delta\psi$,
for some $\psi\in\Lin(G)$. Of course two representations are twist equivalent if and only if the characters they afford are. Note that twist equivalence preserves the dimension of representations, so we can speak of the dimension (degree) of a twist isoclass (twist class). 

If $H\leq G$ and $\psi:G\rightarrow\C^{\times}$ is a function (e.g.,
a degree one character), we will write $\psi|_{H}$ for $\Res_{H}^{G}(\psi)$.
We now define a twist equivalence relation for representations of
a subgroup of $G$, where the twisting is by degree one characters
which extend to $G$.

\begin{defn}
\label{def:G-twist}
	Let $H$ be a subgroup of $G$ and let $\rho$ and $\sigma$ be two
	irreducible representations of $H$. We say that $\rho$ and $\sigma$
	are \emph{$G$-twist equivalent,} and write $\rho\twist_{G}\sigma$,
	if there is a $\psi\in\Lin(G)$ such that
	\[
	\rho\otimes\psi|_{H}\cong\sigma.
	\]
	Similarly, two irreducible characters $\lambda,\delta\in\Irr(H)$
	are \emph{$G$-twist equivalent}, written $\lambda\twist_{G}\delta$,
	if $\lambda=\delta\psi|_{H}$, for some $\psi\in\Lin(G)$. 
\end{defn}
	For a character $\theta\in\Irr(H)$, we write $\tic{\theta}$ for the \emph{$G$-twist class} of $\theta$, that is,
	\[
	\tic{\theta}=\{\rho\in\Irr(H)\mid\rho\twist_{G}\theta\}
	\]
and we denote the set of such $G$-twist classes by $\twirr(H)$.
In particular, when $H=G$, $\twirr(G)$ is in bijection with the set of twist
isoclasses of $G$.

From now on, let $N$ be a normal subgroup of $G$ of finite index.
The conjugation action of $G$ on $\Irr(N)$ induces an action on
$\widetilde{\Irr}(N)$. Indeed, $g\cdot\tic{\theta}:=\tic{\leftexp{g}{\theta}\,}$
is well-defined because for any $\psi\in\Lin(G)$ and any $n\in N$,
we have
\[
\leftexp{g}{(\psi|_{N}\theta)}(n)=\leftexp{g}{\psi}(n)\leftexp{g}{\theta}(n)=\psi(n)\leftexp{g}{\theta}(n),
\]
and hence $\tic{\leftexp{g}{(\psi|_{N}\theta})}=\tic{\leftexp{g}{\theta}\,}$.

For any $\theta\in\Irr(N)$, define the stabiliser subgroups
\[
K_{\tic{\theta}}=\Stab_{G}(\theta),\qquad L_{\tic{\theta}}=\Stab_{G}(\tic{\theta}).
\]
Note that $K_{\tic{\theta}}$ only depends on the class $\tic{\theta}$
because $\Stab_{G}(\theta)=\Stab_{G}(\theta')$ for any $\theta'\in\tic{\theta}$.
It is clear that $K_{\tic{\theta}}\leq L_{\tic{\theta}}$, but in
fact we also have:
\begin{lem}
	The group $K_{\tic{\theta}}$ is normal in $L_{\tic{\theta}}$. 
\end{lem}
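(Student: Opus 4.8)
The plan is to prove the stronger statement that $g K_{\tic\theta} g^{-1}\subseteq K_{\tic\theta}$ for every $g\in L_{\tic\theta}$. Write $K=K_{\tic\theta}=\Stab_G(\theta)$ and $L=L_{\tic\theta}=\Stab_G(\tic\theta)$. Recall first that $L$ is a subgroup of $G$, being the stabiliser of the point $\tic\theta$ for the $G$-action on $\twirr(N)$; in particular $g\in L\Rightarrow g^{-1}\in L$, and by definition of the action there is, for each $g\in L$, a character $\psi_g\in\Lin(G)$ with $\leftexp{g}{\theta}=\theta\,\psi_g|_N$.

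The one ingredient to isolate is that any $\psi\in\Lin(G)$ is a homomorphism into the abelian group $\C^{\times}$, hence a class function on $G$; consequently $\leftexp{k}{(\psi|_N)}=\psi|_N$ as functions $N\to\C^{\times}$, for every $k\in G$. Granting this, I would fix $g\in L$ and $k\in K$, choose $\psi_{g^{-1}}\in\Lin(G)$ with $\leftexp{g^{-1}}{\theta}=\theta\,\psi_{g^{-1}}|_N$, and use $\leftexp{k}{\theta}=\theta$ together with multiplicativity of the conjugation action on functions $N\to\C^{\times}$ to compute
\[
\leftexp{kg^{-1}}{\theta}=\leftexp{k}{(\theta\,\psi_{g^{-1}}|_N)}=\leftexp{k}{\theta}\cdot\leftexp{k}{(\psi_{g^{-1}}|_N)}=\theta\,\psi_{g^{-1}}|_N=\leftexp{g^{-1}}{\theta}.
\]
Applying $\leftexp{g}{(-)}$ to both sides then gives $\leftexp{gkg^{-1}}{\theta}=\leftexp{g}{(\leftexp{g^{-1}}{\theta})}=\theta$, so $gkg^{-1}\in\Stab_G(\theta)=K$, as desired.

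I do not expect any real obstacle here: the argument is a short computation with the conjugation action, and the only points that need care are the (standard) facts that a stabiliser is a subgroup, so that $g^{-1}\in L$, and that linear characters of $G$ are invariant under $G$-conjugation, so that conjugating the twist relation $\leftexp{g^{-1}}{\theta}=\theta\,\psi_{g^{-1}}|_N$ by an element of $K$ leaves it unchanged.
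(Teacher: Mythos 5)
Your proof is correct and takes essentially the same approach as the paper: both are short computations with the conjugation action, with the key fact being that any $\psi\in\Lin(G)$ restricts to a class function on $N$, so that conjugating the twist relation $\leftexp{g^{-1}}{\theta}=\theta\,\psi_{g^{-1}}|_{N}$ by $k\in K_{\tic{\theta}}$ leaves it unchanged. The only cosmetic difference is in how the final cancellation is organised: the paper works pointwise and separately records that $\psi_{g}\psi_{g^{-1}}|_{N}=1$ (via $\leftexp{gg^{-1}}{\theta}=\theta$), whereas you apply $\leftexp{g}{(-)}$ to the identity $\leftexp{kg^{-1}}{\theta}=\leftexp{g^{-1}}{\theta}$, so that the same cancellation happens at the level of the action rather than at the level of the characters $\psi_{g},\psi_{g^{-1}}$; this is marginally slicker but is the same underlying argument.
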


\begin{proof}
	Indeed, if $k\in K_{\tic{\theta}}$, $g\in L_{\tic{\theta}}$ and
	$x\in N$, then there exist some $\psi_{g}$, $\psi_{g^{-1}}\in\Lin(G)$
	such that 
	\[
	\leftexp{g}{\theta}(y)=\theta(y)\psi_{g}(y),\quad\leftexp{g^{-1}}{\theta}(y)=\theta(y)\psi_{g^{-1}}(y),\qquad\text{for all }y\in N.
	\]
	Thus
	\begin{align*}
	\leftexp{gkg^{-1}}{\theta}(x) & =\theta(gk^{-1}g^{-1}xgkg^{-1})=\theta(k^{-1}g^{-1}xgk)\psi_{g}(x)\\
	& =\theta(g^{-1}xg)\psi_{g}(x)=\theta(x)\psi_{g^{-1}}(x)\psi_{g}(x).
	\end{align*}
	But on the other hand, 
	\begin{align*}
	\theta(x) & =\leftexp{gg^{-1}}{\theta}(x)=\leftexp{g}{(\leftexp{g^{-1}}{\theta})}(x)=(\leftexp{g^{-1}}{\theta})(g^{-1}xg)\\
	& =\theta(g^{-1}xg)\psi_{g^{-1}}(g^{-1}xg)=\theta(x)\psi_{g}(x)\psi_{g^{-1}}(x),
	\end{align*}
	so $gkg^{-1}\in K_{\tic{\theta}}$.
\end{proof}

\subsection{Restriction and induction of twist classes}

Let $H$ be a group such that $N\leq H\leq G$. Let $\twirr(H\mid\tic{\theta})$
be the set of those $G$-twist classes $\tic{\lambda}\in\twirr(H)$
such that $\lambda\in\Irr(H\mid\psi|_{N}\theta)$, for some $\psi\in\Lin(G)$.
This is well-defined because $\lambda\in\Irr(H\mid\psi|_{N}\theta)$
if and only if $\psi'|_{H}\lambda\in\Irr(H\mid\psi'|_{N}\psi|_{N}\theta)$,
for all $\psi'\in\Lin(G)$.

The following is an immediate consequence of Clifford's theorem (see
\cite[(6.5)]{Isaacs}). Informally, it says that the $G$-twist classes
contained in the restriction to $N$ of $\tic{\rho}\in\twirr(H\mid\tic{\theta})$
are precisely the $H$-conjugates of $\tic{\theta}$.
\begin{lem}
	\label{lem:Cliffords-thm-twists}Let $\tic{\rho}\in\twirr(H\mid\tic{\theta})$.
	Then $\tic{\rho}\in\twirr(H\mid\leftexp{h}{\tic{\theta}})$, for any
	$h\in H$. Moreover, if $\tic{\rho}\in\twirr(H\mid\tic{\theta'})$,
	for some $\theta'\in\Irr(N)$, then there exists an $h\in H$ such
	that $\leftexp{h}{\tic{\theta}}=\tic{\theta'}$. 
\end{lem}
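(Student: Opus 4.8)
The statement asserts two things about a $G$-twist class $\tic{\rho} \in \twirr(H \mid \tic{\theta})$: first, that it continues to lie over every $H$-conjugate of $\tic{\theta}$; and second, that if it lies over $\tic{\theta'}$ for some other $\theta' \in \Irr(N)$, then $\theta'$ is $H$-conjugate to $\theta$ up to $G$-twist. Both follow from the classical Clifford theorem applied to a concrete representative of $\tic{\rho}$, combined with the observation (made just before the lemma) that the condition $\lambda \in \Irr(H \mid \psi|_N \theta)$ for some $\psi \in \Lin(G)$ depends only on the $G$-twist class $\tic{\lambda}$ and the $G$-twist class $\tic{\theta}$.

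First I would fix a representative $\lambda \in \tic{\rho}$ and, by definition of $\twirr(H \mid \tic{\theta})$, a $\psi \in \Lin(G)$ with $\lambda \in \Irr(H \mid \psi|_N \theta)$. By Clifford's theorem \cite[(6.5)]{Isaacs}, the irreducible constituents of $\Res^H_N \lambda$ form a single $H$-orbit, hence they are exactly the $H$-conjugates of $\psi|_N \theta$. For the first claim: for $h \in H$, the constituent $\leftexp{h}{(\psi|_N\theta)} = \leftexp{h}{\psi}|_N \cdot \leftexp{h}{\theta}$; since $\leftexp{h}{\psi} \in \Lin(G)$ and $\leftexp{h}{\theta}$ is a representative of $\leftexp{h}{\tic{\theta}}$, this exhibits $\lambda \in \Irr(H \mid \leftexp{h}{\psi}|_N \cdot \leftexp{h}{\theta})$, i.e.\ $\tic{\rho} \in \twirr(H \mid \leftexp{h}{\tic{\theta}})$. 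For the second claim: if also $\lambda' \in \Irr(H \mid \psi'|_N \theta')$ for some $\lambda' \in \tic{\rho}$ and $\psi' \in \Lin(G)$, then $\lambda' = \chi|_H \lambda$ for some $\chi \in \Lin(G)$, so $\Res^H_N\lambda'$ and $\Res^H_N\lambda$ differ by the linear character $\chi|_N$; comparing constituents via Clifford's theorem gives an $h \in H$ with $\leftexp{h}{(\psi|_N\theta)} = \chi|_N \cdot \psi'|_N\theta'$. Rearranging, $\leftexp{h}{\theta} = (\leftexp{h}{\psi}^{-1} \chi \psi')|_N \cdot \theta'$ with $\leftexp{h}{\psi}^{-1}\chi\psi' \in \Lin(G)$, which says precisely $\leftexp{h}{\tic{\theta}} = \tic{\theta'}$.

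The only mildly delicate points are bookkeeping ones: checking that $\leftexp{h}{\psi} \in \Lin(G)$ (clear, as $H \leq G$ and $\Lin(G)$ is conjugation-stable), and translating ``same $H$-orbit of $N$-constituents'' into an equality of characters of $N$ up to a linear character of $G$ restricted to $N$. I do not expect a genuine obstacle here; the lemma is essentially a direct repackaging of Clifford's theorem in the twist-class language, and the preparatory remark on well-definedness already does the work of absorbing the choices of $\psi$.
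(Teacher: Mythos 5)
Your proposal is correct and follows the route the paper itself implicitly intends: the paper offers no written proof, stating only that the lemma is ``an immediate consequence of Clifford's theorem,'' and your argument supplies exactly those routine details by unwinding the definition of $\twirr(H\mid\tic{\theta})$ and applying Clifford's theorem to a chosen representative. One small simplification you could make explicit: since $\psi\in\Lin(G)$ is a homomorphism into an abelian group, $\leftexp{h}{\psi}=\psi$ for every $h\in G$, so you may replace $\leftexp{h}{\psi}|_N$ by $\psi|_N$ throughout, which tidies the bookkeeping (and makes the final linear character in your second claim simply $(\psi^{-1}\chi\psi')|_N$, or $(\chi^{-1}\psi^{-1}\psi')|_N$ depending on the sign convention); this has no effect on the validity of what you wrote.
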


We now consider induction of twist classes.
Let $H$ and $H'$ be groups such that $K_{\tic{\theta}}\leq H\leq H'\leq G$.
Induction gives rise to a function
\begin{align*}
\twind_{H}^{H'}:\twirr(H\mid\tic{\theta}) & \longrightarrow\twirr(H'\mid\tic{\theta})\\
\tic{\lambda} & \longmapsto\tic{\Ind_{H}^{H'}\lambda},
\end{align*}
which is well-defined thanks to the formula $\Ind_{H}^{H'}(\psi|_{H}\lambda)=\psi|_{H'}\Ind_{H}^{H'}(\lambda)$,
for $\psi\in\Lin(G)$, and surjective thanks to standard Clifford
theory (see \cite[(6.11)(b)]{Isaacs}). However, unlike the classical
Clifford correspondence, where induction gives a bijection $\Irr(H\mid\theta)\rightarrow\Irr(H'\mid\theta)$,
the map $\twind_{H}^{H'}$ is not necessarily injective. Nevertheless, once we get up to the group $L_{\tic{\theta}}$, induction of twist classes behaves as in the classical Clifford correspondence, namely:

\begin{lem}
	\label{lem:Ind-is-bijective} The map $\twind_{L_{\tic{\theta}}}^{G}$
	is bijective. 
\end{lem}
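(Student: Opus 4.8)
Write $L=L_{\tic{\theta}}$ and $K=K_{\tic{\theta}}$, and recall $K=\Stab_{G}(\theta)$ and $K\trianglelefteq L$. Surjectivity of $\twind_{L}^{G}$ is the special case $H=L$, $H'=G$ of the surjectivity already noted for $\twind_{H}^{H'}$, so only injectivity needs an argument; the plan is to reduce it to the ordinary Clifford correspondence applied along the chain $K\le L\le G$, the decisive point being that the conjugating element produced by Clifford's theorem is automatically forced into $L$. So suppose $\tic{\lambda},\tic{\mu}\in\twirr(L\mid\tic{\theta})$ with $\twind_{L}^{G}\tic{\lambda}=\twind_{L}^{G}\tic{\mu}$. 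First I would choose the representatives $\lambda,\mu$ to lie over $\theta$ itself: any class in $\twirr(L\mid\tic{\theta})$ has a representative over $\psi|_{N}\theta$ for some $\psi\in\Lin(G)$, and twisting it by $\psi^{-1}$ (which does not change the $G$-twist class) yields one over $\theta$. With this choice $\Ind_{L}^{G}\lambda,\Ind_{L}^{G}\mu\in\Irr(G\mid\theta)$, and the hypothesis gives some $\psi_{0}\in\Lin(G)$ with $\Ind_{L}^{G}\lambda=\psi_{0}\cdot\Ind_{L}^{G}\mu=\Ind_{L}^{G}(\psi_{0}|_{L}\mu)$.

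The key step is then to check that $\psi_{0}|_{L}\mu$ still lies over $\theta$. Restricting the equality $\Ind_{L}^{G}\lambda=\psi_{0}\cdot\Ind_{L}^{G}\mu$ to $N$ and applying Clifford's theorem, the $G$-orbit of $\theta$ (the set of $N$-constituents of $\Ind_{L}^{G}\lambda$) coincides with the $G$-orbit of $\psi_{0}|_{N}\theta$ (that of $\psi_{0}\cdot\Ind_{L}^{G}\mu$), so $\psi_{0}|_{N}\theta=\leftexp{g}{\theta}$ for some $g\in G$. But then $\leftexp{g}{\theta}\twist_{G}\theta$, hence $g\in\Stab_{G}(\tic{\theta})=L$. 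Since $\psi_{0}|_{L}\mu$ lies over $\psi_{0}|_{N}\theta=\leftexp{g}{\theta}$ with $g\in L$, the $L$-orbit formed by its $N$-constituents also contains $\theta$, so $\psi_{0}|_{L}\mu\in\Irr(L\mid\theta)$.

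Finally, because $\Stab_{G}(\theta)=K\le L$, transitivity of the Clifford correspondence for $K\le L\le G$ shows that $\Ind_{L}^{G}$ restricts to a bijection $\Irr(L\mid\theta)\to\Irr(G\mid\theta)$. As $\lambda$ and $\psi_{0}|_{L}\mu$ both lie in $\Irr(L\mid\theta)$ and have the same image, they are equal, so $\lambda=\psi_{0}|_{L}\mu$, i.e. $\tic{\lambda}=\tic{\mu}$, which proves injectivity. The only non-formal point is the middle step: it is the stabiliser of the \emph{twist class} $\tic{\theta}$, rather than of $\theta$, that pins the conjugating element $g$ inside $L$; this is precisely the feature that fails for the smaller group $K$ and explains why $\twind_{H}^{H'}$ need not be injective in general.
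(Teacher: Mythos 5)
Your proof is correct and follows essentially the same approach as the paper's: reduce to representatives over $\theta$, use Clifford's theorem on the induced characters to produce a conjugating element $g$, observe that $g$ is forced into $L_{\tic{\theta}}$ because it sends $\theta$ into its own twist class, and finish with the ordinary Clifford correspondence since $\Stab_G(\theta)=K_{\tic{\theta}}\le L_{\tic{\theta}}$. The only cosmetic difference is that you apply the final Clifford correspondence over $\theta$ whereas the paper applies it over $\psi|_N\theta$ — these are symmetric variations of the same step.
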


\begin{proof}
	Let $\tic{\lambda},\tic{\lambda'}\in\twirr(L_{\tic{\theta}}\mid\tic{\theta})$
	such that $\Ind_{L_{\tic{\theta}}}^{G}\lambda\sim_{G}\Ind_{L_{\tic{\theta}}}^{G}\lambda'$.
	After multiplying by suitable degree one characters of $G$ restricted
	to $L_{\tic{\theta}}$ we may assume that both $\lambda$ and $\lambda'$
	lie above $\theta$. By hypothesis, there is a $\psi\in\Lin(G)$ such
	that 
	\[
	\Ind_{L_{\tic{\theta}}}^{G}\lambda=\psi\Ind_{L_{\tic{\theta}}}^{G}\lambda',
	\]
	so by Clifford's theorem there is a $g\in G$ such that $\leftexp{g}{\theta}=\psi|_{N}\theta$.
	Thus $g\in L_{\tic{\theta}}$ and $\lambda=\leftexp{g}{\lambda}$
	lies above $\psi|_{N}\theta$. Standard Clifford theory now implies
	that $\lambda=\psi|_{L_{\tic{\theta}}}\lambda'$ because $\lambda$ and $\psi|_{L_{\tic{\theta}}}\lambda'$ induce
	to the same irreducible character of $G$, both lie above $\psi|_{N}\theta$, 
	and $\Stab_{G}(\psi|_{N}\theta)=K_{\tic{\theta}}\leq L_{\tic{\theta}}$.
\end{proof}

\subsection{\label{subsec:The-function-bar-mu}The function $\bar{\mu}$ attached
	to a strong extension}

From now on, let $\theta\in\Irr(N)$ be fixed, let $L$ and $K$ groups
such that
\[
N\leq L\leq L_{\tic{\theta}}\qquad\text{and}\qquad N\leq K\leq K_{\tic{\theta}}.
\]
We assume that $L$ normalises $K$. 
The situations we will consider
where this is satisfied are when either $L=L_{\tic{\theta}}$ and
$K=L\cap K_{\tic{\theta}}$ or $L\leq L_{\tic{\theta}}$ and $K=K_{\tic{\theta}}$.

From now on, let $\hat{\theta}\in\PIrr_{\alpha}(K)$ be a strong extension
of $\theta$ to $K$ with factor set $\alpha$. For any $g\in L$,
we have
\begin{equation}
\leftexp{g}{\theta}=\theta\psi_{g}|_{N},\label{eq:g-theta-chi}
\end{equation}
for some $\psi_{g}\in\Lin(G)$. The conjugate projective character
$\leftexp{g}{\hat{\theta}}$ defined by $\leftexp{g}{\hat{\theta}}(x)=\hat{\theta}(g^{-1}xg)$
has factor set $\leftexp{g}{\alpha}$, where 
\[
\leftexp{g}{\alpha}(x,y)=\alpha(g^{-1}xg,g^{-1}yg)\qquad\text{for all }x,y\in K.
\]
Since both $\leftexp{g}{\hat{\theta}}$ and $\hat{\theta}\psi_{g}|_{K}$
are strong extensions of $\leftexp{g}{\theta}$, there exists a function
\[
\mu(g):K/N\rightarrow\C^{\times}
\]
(i.e., a function on $K$ constant on cosets of $N$) such that
\begin{equation}
\leftexp{g}{\hat{\theta}}=\hat{\theta}\psi_{g}|_{K}\cdot\mu(g).\label{eq:g-hat-theta-mu}
\end{equation}
Note that we may take $\mu(g)=\mu(gn)$, for any $n\in N$ because
$N$ fixes $\hat{\theta}$. Indeed, if $\Theta$ is a representation
affording $\theta$ and $\widehat{\Theta}$ is a projective representation
of $K$ affording $\hat{\theta}$, then, for any $n\in N$ and $x\in K$,
we have 

\begin{align}
\leftexp{n}{\hat{\theta}}(x) & =\Tr(\widehat{\Theta}(n^{-1}xn))=\Tr(\Theta(n^{-1})\widehat{\Theta}(x)\Theta(n))=\Tr(\widehat{\Theta}(x))=\hat{\theta}(x).\label{eq:N-fixes-theta-hat}
\end{align}
We will therefore henceforth write $\mu(gN)$ instead of $\mu(g)$. 

Using \eqref{eq:g-hat-theta-mu} and the fact that factor sets multiply
under tensor products of projective representations, we deduce that
the factor set of $\mu(gN)$ is $\leftexp{g}{\alpha}\alpha^{-1}$,
that is,
\[
\mu(gN)\in\PIrr_{\leftexp{g}{\alpha}\alpha^{-1}}(K/N).
\]

\begin{lem}
\label{lem:theta-hat-non-trivial-on-coset-Burnside}
	For every $x\in K$ there exists an $n\in N$ such that $\hat{\theta}(xn)\neq0$.
	Thus, for fixed $\theta$, the function $\mu(gN)$ is uniquely determined
	by $gN$, $\hat{\theta}$ and $\psi_{g}|_{K}$.
\end{lem}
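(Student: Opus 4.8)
The plan is to establish the non-vanishing statement by a Burnside-type argument, and then to read off the uniqueness of $\mu(gN)$ from \eqref{eq:g-hat-theta-mu} by evaluating at a suitable coset representative.

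For the first assertion, fix a projective representation $\widehat{\Theta}$ of $K$ affording $\hat{\theta}$ and such that $\widehat{\Theta}|_{N}=\Theta$ is an ordinary (finite-dimensional) irreducible representation of $N$ affording $\theta$ on a space $V$; such a $\widehat{\Theta}$ exists because $\hat{\theta}$ strongly extends $\theta$, and the same property gives $\widehat{\Theta}(xn)=\widehat{\Theta}(x)\Theta(n)$ for all $x\in K$ and $n\in N$ (cf.\ \cite[\defStrongExt]{Rationality1}). Hence, for a fixed $x\in K$,
\[
\hat{\theta}(xn)=\Tr\bigl(\widehat{\Theta}(x)\Theta(n)\bigr)\qquad\text{for all }n\in N.
\]
Suppose, towards a contradiction, that this vanishes for every $n\in N$. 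Since $\Theta$ is irreducible and $\C$ is algebraically closed, the $\C$-linear span of $\{\Theta(n)\mid n\in N\}$ equals $\End(V)$ by Burnside's theorem (equivalently, the Jacobson density theorem). Thus $\Tr\bigl(\widehat{\Theta}(x)A\bigr)=0$ for all $A\in\End(V)$, and non-degeneracy of the trace form $(A,B)\mapsto\Tr(AB)$ on $\End(V)$ forces $\widehat{\Theta}(x)=0$, contradicting the invertibility of $\widehat{\Theta}(x)$. Therefore some $n\in N$ satisfies $\hat{\theta}(xn)\neq0$.

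For the second assertion, recall that $\psi_{g}(x)\in\C^{\times}$ for all $x\in K$, and that $\leftexp{g}{\hat{\theta}}$ depends only on the coset $gN$ (as follows from \eqref{eq:N-fixes-theta-hat}, applied to $\leftexp{g}{\widehat{\Theta}}$). Evaluating \eqref{eq:g-hat-theta-mu} at any $x\in K$ with $\hat{\theta}(x)\neq0$ then yields
\[
\mu(gN)(xN)=\frac{\leftexp{g}{\hat{\theta}}(x)}{\hat{\theta}(x)\,\psi_{g}(x)}.
\]
By the first assertion, every coset of $N$ in $K$ contains such an $x$, so this formula determines $\mu(gN)$ on all of $K/N$, and shows that $\mu(gN)$ depends only on $gN$, $\hat{\theta}$ and $\psi_{g}|_{K}$, as claimed. (The right-hand side is automatically independent of the chosen representative $x$ of its coset, since $\mu(gN)$ is by construction a well-defined function on $K/N$.)

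The only substantive ingredient is Burnside's density theorem: once $\widehat{\Theta}|_{N}=\Theta$ is irreducible, it makes the trace of the invertible operator $\widehat{\Theta}(x)$ against the full matrix algebra vanish, which is impossible. I therefore expect the only point requiring care to be the appeal to the defining property of a strong extension that yields $\widehat{\Theta}(xn)=\widehat{\Theta}(x)\Theta(n)$ with no cocycle correction, so that the trace identity above holds exactly rather than up to a factor set.
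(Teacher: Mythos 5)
Your proof is correct and follows essentially the same route as the paper's: Burnside's density theorem combined with the non-degeneracy of the trace form on the matrix algebra for the first assertion, then solving \eqref{eq:g-hat-theta-mu} at a non-vanishing point of each coset for the second. The only (welcome) difference is that you make explicit the use of the strong-extension property $\widehat{\Theta}(xn)=\widehat{\Theta}(x)\Theta(n)$, which the paper invokes silently.
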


\begin{proof}
	Let $\Theta$ be a representation affording $\theta$ and $\widehat{\Theta}$
	a projective representation of $K$ affording $\hat{\theta}$, so
	that $\hat{\theta}(xn)=\Tr(\widehat{\Theta}(xn))=\Tr(\widehat{\Theta}(x)\Theta(n))$.
	Assume that $\hat{\theta}(xn)=0$ for all $n\in N$. Then $\Tr(\widehat{\Theta}(x)\Theta(n))=0$
	for all $n\in N$, and by a theorem of Burnside (see \cite[(27.4)]{Curtis_Reiner})
	the values of $\Theta$ span the whole algebra $\M_{\theta(1)}(\C)$
	of matrices of size $\theta(1)$, so we have $\Tr(\widehat{\Theta}(x)A)=0$,
	for all $A\in\M_{\theta(1)}(\C)$. Since the trace form on $\M_{\theta(1)}(\C)$
	is non-degenerate, this implies that $\widehat{\Theta}(x)=0$, which is
	a contradiction. Thus $\hat{\theta}(xn)\neq0$ for some $n\in N$
	and
    	\[
	        \mu(gN)(xN)=\mu(gN)(xnN)=\leftexp{g}{\hat{\theta}}(xn)\hat{\theta}(xn)^{-1}\psi_{g}(xn)^{-1},
	    \]
	which proves the second assertion.
\end{proof}
We now consider how $\mu(gN)$ depends on $\psi_{g}|_{K}$. By \eqref{eq:g-theta-chi},
we have $\leftexp{g}{\theta}=\theta\psi_{g}|_{N}$. Let $\psi_{g}'\in\Lin(G)$
be such that $\leftexp{g}{\theta}=\theta\psi_{g}'|_{N}$. Then $\theta\, (\psi_{g}\psi_{g}'^{-1})|_{N} = \theta$
and since both $\hat{\theta}\,(\psi_{g}\psi_{g}'^{-1})|_{K}$ and $\hat{\theta}$
are strong extensions of $\theta$, we have 
\begin{equation}
\hat{\theta}\,(\psi_{g}\psi_{g}'^{-1})|_{K}=\hat{\theta}\cdot\nu_{g},\label{eq:theta-psi-nu_g}
\end{equation}
for some function $\nu_{g}:K/N\rightarrow\C^{\times}$. In fact, since
$\hat{\theta}\,(\psi_{g}\psi_{g}'^{-1})|_{K}$ and $\hat{\theta}$ have
the same factor set, $\nu_{g}$ has trivial factor set, that is, $\nu_{g}$
is a homomorphism.
Thus \eqref{eq:g-hat-theta-mu} can be written 
\begin{equation}
\leftexp{g}{\hat{\theta}}=\hat{\theta}\psi_{g}|_{K}\cdot\mu(gN)=\hat{\theta}\psi_{g}'|_{K}\cdot\mu(gN)\nu_{g}.\label{eq:mu-nu_g-ambiguity}
\end{equation}

\begin{defn}\label{def:Gamma}
Define the following subgroup of $\Lin(K/N)$.
	\[
\Gamma_{K,\tic{\theta}}=\{\nu\in\Lin(K/N)\mid\hat{\theta}\varepsilon|_{K}=\hat{\theta}\nu,\ \text{for some}\ \varepsilon\in\Lin(G)\}.
\]
(as usual, we denote by $\Lin(K/N)$ the subgroup of $\Lin(K)$ of characters which are trivial on $N$).
\end{defn}
In the present section, $K$ and $\tic{\theta}$ are fixed and we will simply write $\Gamma$ for $\Gamma_{K,\tic{\theta}}$.
Note that $\Gamma$ is independent of the choice of representative
$\theta$ of $\tic{\theta}$ and of the choice of strong extension
$\hat{\theta}$ of $\theta$. Indeed, if $\psi\in\Lin(G)$ and $\hat{\theta}'$
is a strong extension of $\theta\psi|_{N}$, then there exists a function
$\omega:K/N\rightarrow\C^{\times}$ such that
\[
\hat{\theta}\psi|_{K}\omega=\hat{\theta}'.
\]
Clearly $\hat{\theta}\varepsilon|_{K}=\hat{\theta}\nu$ holds for some $\varepsilon\in\Lin(G)$,
if and only if $\hat{\theta}\varepsilon|_{K}\psi|_{K}\omega = \hat{\theta}\nu\psi|_{K}\omega$, 
that is, by the equation above, if and only if $\hat{\theta}'\varepsilon|_{K}=\hat{\theta}'\nu$.

Moreover, for every $\nu\in\Gamma$ and $\psi_{g}$ as in \eqref{eq:g-hat-theta-mu}, if we
let $\varepsilon\in\Lin(G)$ be such that $\hat{\theta}\varepsilon|_{K}=\hat{\theta}\nu$,
we have that \eqref{eq:theta-psi-nu_g} holds with $\psi'_{g}=\varepsilon^{-1}\psi_{g}$
and $\nu_{g}=\nu$. Thus \eqref{eq:mu-nu_g-ambiguity} implies the
following.
\begin{lem}
	For any $g\in L$, the coset $\bar{\mu(gN)}:=\mu(gN)\Gamma$ is independent
	of the choice of $\psi_{g}|_{K}$ in \eqref{eq:g-hat-theta-mu}.
\end{lem}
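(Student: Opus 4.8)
The plan is to read the statement off equation~\eqref{eq:mu-nu_g-ambiguity} together with Definition~\ref{def:Gamma}, so the proof is essentially a bookkeeping argument with the data already assembled above. Fix $g\in L$ and the strong extension $\hat{\theta}$, and let $\psi_{g},\psi_{g}'\in\Lin(G)$ be two choices satisfying \eqref{eq:g-theta-chi}; write $\mu(gN)$ and $\mu'(gN)$ for the associated functions determined by \eqref{eq:g-hat-theta-mu}. First I would set $\varepsilon\coloneqq\psi_{g}\psi_{g}'^{-1}\in\Lin(G)$ and note that $\theta\varepsilon|_{N}=\theta$; hence $\hat{\theta}\varepsilon|_{K}$ and $\hat{\theta}$ are both strong extensions of $\theta$ with the same factor set $\alpha$, so there is a $\nu_{g}\in\Lin(K/N)$ (a genuine character, since the factor set of $\nu_g$ is trivial) with $\hat{\theta}\varepsilon|_{K}=\hat{\theta}\nu_{g}$, which is exactly \eqref{eq:theta-psi-nu_g}. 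The key observation is that $\varepsilon$ itself witnesses $\nu_{g}\in\Gamma$ in the sense of Definition~\ref{def:Gamma}.

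Next, substituting this into \eqref{eq:mu-nu_g-ambiguity} gives $\leftexp{g}{\hat{\theta}}=\hat{\theta}\psi_{g}'|_{K}\cdot\mu(gN)\nu_{g}$; comparing with the defining relation \eqref{eq:g-hat-theta-mu} for $\psi_{g}'$ and invoking the uniqueness of the $\mu$-function (Lemma~\ref{lem:theta-hat-non-trivial-on-coset-Burnside}, which via Burnside ensures $\hat{\theta}$ is nonzero on some element of every coset of $N$, so the quotient defining $\mu'(gN)$ is unambiguous) yields $\mu'(gN)=\mu(gN)\nu_{g}$. Since $\nu_{g}$ is a homomorphism on $K/N$, tensoring by it does not alter the factor set, so $\mu(gN)\nu_{g}$ still lies in $\PIrr_{\leftexp{g}{\alpha}\alpha^{-1}}(K/N)$ and the product $\mu(gN)\Gamma$ makes sense as a coset of functions $K/N\to\C^{\times}$. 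As $\Gamma$ is a subgroup of $\Lin(K/N)$ we have $\nu_{g}\Gamma=\Gamma$, whence $\mu'(gN)\Gamma=\mu(gN)\nu_{g}\Gamma=\mu(gN)\Gamma$, which is the asserted independence.

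There is no genuinely hard step: everything of substance is contained in \eqref{eq:theta-psi-nu_g}–\eqref{eq:mu-nu_g-ambiguity}, and the lemma amounts to recognising that the correction term $\nu_{g}$ arising from changing $\psi_{g}$ is precisely of the special shape (realised by an element of $\Lin(G)$) used to define $\Gamma$, hence lies in $\Gamma$. The only point that deserves a line of care is the placement of the objects — that $\nu_{g}$ is an honest character of $K/N$ and that $\mu(gN)$ has the prescribed factor set — so that the coset $\mu(gN)\Gamma$ is well defined in the group of $\C^{\times}$-valued functions on $K/N$ modulo $\Gamma$; the factor-set remark above takes care of this.
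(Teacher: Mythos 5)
Your proof is correct and matches the paper's intended argument: the content of equations \eqref{eq:theta-psi-nu_g}--\eqref{eq:mu-nu_g-ambiguity} together with Definition~\ref{def:Gamma} is precisely that changing $\psi_g$ to $\psi_g'$ multiplies $\mu(gN)$ by a character $\nu_g$ for which $\varepsilon=\psi_g\psi_g'^{-1}$ is the required witness that $\nu_g\in\Gamma$, and the paper states the lemma as an immediate consequence of that discussion. Your remark about factor sets of $\mu(gN)\nu_g$ is harmless but unnecessary, since the coset is taken in the ambient group $F_K=\Func(K/N,\C^\times)$ where no factor-set compatibility is at issue.
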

In what follows, for a set $A$, we use the notation $\Func(A,\C^{\times})$ 
to denote the group of functions
$A\rightarrow\C^{\times}$ under pointwise multiplication.
The last lemma implies that, when $\theta$ is fixed, $gN$ and $\hat{\theta}$ uniquely
determine $\bar{\mu(gN)}$ and hence $\hat{\theta}$ uniquely determines
the function
\[
\bar{\mu}:L/N\longrightarrow F_{K}/\Gamma,\qquad g\longmapsto\bar{\mu(gN)},
\]
where
    \[
	F_{K}:=\Func(K/N,\C^{\times}).
    \]
We endow
the abelian group $F_{K}$ with the structure of $L/N$-module via
$gN\cdot f=\leftexp{g}{f}$, that is, $(gN\cdot f)(xN)=f(g^{-1}xgN)$ 
(this is well-defined because $K$ is normalised by $L$). 
Since $\hat{\theta}\varepsilon|_{K}=\hat{\theta}\nu$
implies, by conjugating both sides by $g$, that $\hat{\theta}\varepsilon|_{K}=\hat{\theta}\leftexp{g}{v}$,
$\Gamma$ is a submodule of $F_{K}$. Thus the quotient $F_{K}/\Gamma$
carries the corresponding $L/N$-module structure.

\subsection{\label{subsec:The-cohom-class-bar-mu}The cohomology class determined
	by $\bar{\mu}$}

We now consider how $\bar{\mu}$ depends on the choice of strong
extension $\hat{\theta}$ and the choice of representative $\theta\in\tic{\theta}$.
\begin{prop}
	\label{prop:function-mu-cohomology}Let $\theta\in\Irr(N)$ and let
	$\hat{\theta}$ be a strong extension of $\theta$ to $K$. The function
	$\bar{\mu}$ associated with $\hat{\theta}$ is an element of $\cocy{1}(L/N,F_{K}/\Gamma)$.
	The image $[\bar{\mu}]$ of $\bar{\mu}$ in $\coho{1}(L/N,F_{K}/\Gamma)$
	is uniquely determined by $\tic{\theta}$, that is, independent of
	the choice of strong extension $\hat{\theta}$ and independent of
	the choice of representative $\theta\in\tic{\theta}$.
\end{prop}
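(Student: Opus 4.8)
The plan is to verify the cocycle identity directly from the definition of $\bar\mu$ and the multiplicativity of the conjugation action, and then to analyze separately the two sources of ambiguity, showing each one changes $\bar\mu$ by a coboundary. First I would establish that $\bar\mu\in\cocy{1}(L/N,F_K/\Gamma)$. For $g,h\in L$ we have $\leftexp{gh}{\hat\theta} = \leftexp{g}{(\leftexp{h}{\hat\theta})}$. Applying \eqref{eq:g-hat-theta-mu} twice: $\leftexp{h}{\hat\theta} = \hat\theta\psi_h|_K\cdot\mu(hN)$, and then conjugating by $g$ gives $\leftexp{g}{(\leftexp{h}{\hat\theta})} = \leftexp{g}{\hat\theta}\cdot\leftexp{g}{(\psi_h|_K)}\cdot\leftexp{g}{\mu(hN)} = \hat\theta\psi_g|_K\cdot\mu(gN)\cdot(\psi_h\circ\text{conj}_{g^{-1}})|_K\cdot\leftexp{g}{\mu(hN)}$. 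On the other hand, directly $\leftexp{gh}{\hat\theta} = \hat\theta\psi_{gh}|_K\cdot\mu(ghN)$. Since $\psi_g\cdot(\psi_h\circ\text{conj}_{g^{-1}})$ and $\psi_{gh}$ are both degree-one characters of $G$ with $\leftexp{gh}{\theta} = \theta\cdot(\text{either})|_N$, their ratio restricted to $K$ lies in $\Gamma$ by Definition~\ref{def:Gamma} (using that $\hat\theta$ times this ratio and $\hat\theta$ are both strong extensions of the same character). Comparing the two expressions for $\leftexp{gh}{\hat\theta}$ and passing to $F_K/\Gamma$, the $\psi$-terms cancel and we get $\mu(ghN)\equiv \mu(gN)\cdot\leftexp{g}{\mu(hN)}\pmod\Gamma$, which is exactly the $1$-cocycle identity for the $L/N$-module $F_K/\Gamma$ (written multiplicatively).

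Next I would handle the dependence on the choice of strong extension $\hat\theta$ with $\theta$ fixed. If $\hat\theta'$ is another strong extension of $\theta$, then since both have the same property there is a function $\omega\colon K/N\to\C^\times$ with $\hat\theta' = \hat\theta\cdot\omega$, and in fact $\omega$ is a homomorphism (same factor set), so $\omega\in\Lin(K/N)$. Computing $\mu'(gN)$ for $\hat\theta'$: from $\leftexp{g}{\hat\theta'} = \leftexp{g}{\hat\theta}\cdot\leftexp{g}{\omega} = \hat\theta\psi_g|_K\mu(gN)\leftexp{g}{\omega} = \hat\theta'\omega^{-1}\psi_g|_K\mu(gN)\leftexp{g}{\omega}$, so $\mu'(gN) = \mu(gN)\cdot\leftexp{g}{\omega}\cdot\omega^{-1}$. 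Modulo $\Gamma$ this says $\bar{\mu'}(gN) = \bar\mu(gN)\cdot\big(gN\cdot\bar\omega\big)\cdot\bar\omega^{-1}$ where $\bar\omega = \omega\Gamma\in F_K/\Gamma$ — precisely modification by the $1$-coboundary of $\bar\omega$. Hence $[\bar{\mu'}] = [\bar\mu]$ in $\coho{1}(L/N,F_K/\Gamma)$.

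Finally I would handle the dependence on the representative $\theta\in\tic\theta$. Replace $\theta$ by $\theta' = \theta\psi|_N$ for $\psi\in\Lin(G)$; note $K$, $L$, and (as already remarked after Definition~\ref{def:Gamma}) $\Gamma$ are unchanged, and a strong extension of $\theta'$ may be taken to be $\hat\theta' = \hat\theta\psi|_K$. For $g\in L$, if $\leftexp{g}{\theta} = \theta\psi_g|_N$ then $\leftexp{g}{\theta'} = \theta'\cdot(\psi_g\cdot\psi^{-1}\cdot\leftexp{g}{\psi})|_N$, so we may take the new linear character to be $\psi_g' = \psi_g\psi^{-1}\,\leftexp{g}\psi$. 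Then $\leftexp{g}{\hat\theta'} = \leftexp{g}{\hat\theta}\cdot\leftexp{g}{\psi}|_K = \hat\theta\psi_g|_K\mu(gN)\cdot\leftexp{g}{\psi}|_K = \hat\theta'\psi^{-1}|_K\psi_g|_K\mu(gN)\leftexp{g}{\psi}|_K = \hat\theta'\psi_g'|_K\cdot\mu(gN)$, so the associated function for $\theta',\hat\theta'$ is literally the same $\mu(gN)$ — in particular $[\bar\mu]$ is unchanged. Combining this with the previous paragraph (which absorbs any other choice of strong extension of $\theta'$) completes the proof.

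The main obstacle I anticipate is the bookkeeping in the cocycle identity: one must carefully track that $\leftexp{g}{(\psi_h|_K)}$ equals $(\leftexp{g}{\psi_h})|_K$ and that this is itself the restriction of a degree-one character of $G$, and then verify that the discrepancy between $\psi_g\cdot\leftexp{g}{\psi_h}$ and $\psi_{gh}$ restricted to $K$ lands in $\Gamma$ rather than merely in $\Lin(K/N)$ — this is where Definition~\ref{def:Gamma}, together with the fact that $\hat\theta$ and $\hat\theta$ twisted by that discrepancy are both strong extensions of $\leftexp{gh}{\theta}$, is essential. Everything else is routine manipulation with strong extensions and Lemma~\ref{lem:theta-hat-non-trivial-on-coset-Burnside} to guarantee that the relevant ratios of functions are well-defined on $K/N$.
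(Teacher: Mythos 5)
Your proof is essentially the paper's: the same three-step decomposition (cocycle identity via Definition~\ref{def:Gamma}; change of strong extension contributes a $1$-coboundary; change of representative of $\tic{\theta}$ leaves $\bar{\mu}$ unchanged once a compatible strong extension is chosen), with the same computations. Two small slips are worth flagging, though neither is fatal. In the cocycle step you say that the ratio $\varepsilon := \psi_g\,\leftexp{g}{\psi_h}\,\psi_{gh}^{-1}$, restricted to $K$, lies in $\Gamma$. That is not literally right: $\varepsilon|_N$ need not be trivial (the relation $\theta\,\varepsilon|_N=\theta$ only forces $\varepsilon|_N^{\theta(1)}=1$), so $\varepsilon|_K$ need not lie in $\Lin(K/N)$ at all, let alone in $\Gamma$. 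What Definition~\ref{def:Gamma} actually gives you --- and what your parenthetical and your displayed conclusion show you have in mind --- is that the element $\nu\in\Lin(K/N)$ determined by $\hat{\theta}\,\varepsilon|_K=\hat{\theta}\,\nu$, which on comparing the two expressions for $\leftexp{gh}{\hat{\theta}}$ equals $\mu(gN)\,\leftexp{g}{\mu(hN)}\,\mu(ghN)^{-1}$, lies in $\Gamma$; that is precisely the crossed-homomorphism identity in $F_K/\Gamma$. In the second step you assert that $\omega$ with $\hat{\theta}'=\hat{\theta}\,\omega$ is a homomorphism because the two factor sets agree; but two strong extensions of the same $\theta$ need not share a factor set, so a priori $\omega$ is only an element of $F_K$. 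Your coboundary argument does not in fact use $\omega\in\Lin(K/N)$, so this is harmless, but the parenthetical claim is false. Finally, a purely cosmetic simplification you are missing and the paper uses silently: conjugation by any $g\in G$ fixes every linear character of $G$, so $\leftexp{g}{\psi_h}=\psi_h$ and $\leftexp{g}{\psi}=\psi$; this removes the conjugation twists on the $\psi$'s throughout, and in particular makes $\psi_g'=\psi_g$ immediate in your third step.
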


\begin{proof}
	For the first statement, we need to show that $\bar{\mu}$ is a crossed
	homomorphism, that is, that for all $g,g'\in L$, $\bar{\mu}(gg'N)=\bar{\mu}(gN)\leftexp{g}{\bar{\mu}(g'N)}$,
	or equivalently,
	\[
	\mu(gN)\leftexp{g}{\mu(g'N)}\Gamma=\mu(gg'N)\Gamma.
	\]
	By \eqref{eq:g-hat-theta-mu}, there exist some $\psi_{g},\psi_{g'}\in\Lin(G)$
	such that 
	\begin{align*}
	\leftexp{gg'}{\hat{\theta}} & =\leftexp{g}{(\hat{\theta}\psi_{g'}|_{K}\cdot\mu(g'N))}=\leftexp{g}{\hat{\theta}}\psi_{g'}|_{K}\cdot\leftexp{g}{\mu(g'N)}\\
	& =\hat{\theta}\psi_{g}|_{K}\cdot\mu(gN)\psi_{g'}|_{K}\cdot\leftexp{g}{\mu(g'N)}\\
	& =\hat{\theta}\,(\psi_{g}\psi_{g'})|_{K}\cdot\mu(gN)\leftexp{g}{\mu(g'N)}.
	\end{align*}
	On the other hand, for some $\psi_{gg'}\in\Lin(G)$ we have
	$\leftexp{gg'}{\hat{\theta}}=\hat{\theta}\psi_{gg'}|_{K}\cdot\mu(gg'N)$
	and hence
	\[
	\hat{\theta}\psi_{gg'}|_{K}\cdot\mu(gg'N)=\hat{\theta}\,(\psi_{g}\psi_{g'})|_{K}\cdot\mu(gN)\leftexp{g}{\mu(g'N)}.
	\]
	This is equivalent to
	\begin{equation}
	\hat{\theta}\,(\psi_{g}\psi_{g'})^{-1}|_{K}\psi_{gg'}|_{K}=\hat{\theta}\cdot\mu(gN)\leftexp{g}{\mu(g'N)}\mu(gg'N)^{-1},
	\end{equation}
	which implies that $\mu(gN)\leftexp{g}{\mu(g'N)}\mu(gg'N)^{-1}\in\Gamma$. Thus $\bar{\mu}$ is crossed homomorphism.
	
	For the second statement, let $\hat{\theta}'$ 
	be another strong extension of $\theta$ to $K$. Then there exists
	a function $\omega\in F_{K}$, such that $\hat{\theta}'=\hat{\theta}\omega$
	and hence, for any $g\in L$ there is a $\psi_{g}\in\Lin(G)$ such
	that
		\begin{equation}
			\leftexp{g}{\hat{\theta}'}
			\label{eq:another-strong-extn}
				= \hat{\theta}\psi_{g}|_{K}\cdot\mu(gN)\leftexp{g}{\omega}
				= \hat{\theta}' \psi_{g}|_{K} \cdot \mu(gN) \leftexp{g}{\omega} \omega^{-1}.
		\end{equation}
	The function 
	\[
	f:gN\mapsto\leftexp{g}{\omega}\omega^{-1}\Gamma=\leftexp{g}{\omega\Gamma}(\omega\Gamma)^{-1}
	\]
	lies in $\cobo{1}(L/N,F_{K}/\Gamma)$ and \eqref{eq:another-strong-extn}
	implies that $[\bar{\mu}]=[\bar{\mu}f]$. Hence both $\hat{\theta}$
	and $\hat{\theta}'$ determine the same element $[\bar{\mu}]\in \cocy{1}(L/N,F_{K}/\Gamma)/\cobo{1}(L/N,F_{K}/\Gamma)=\coho{1}(L/N,F_{K}/\Gamma)$.
	
	Finally, we need to show that $[\bar{\mu}]$ is independent of the
	choice of representative $\theta\in\tic{\theta}$. Let $\theta'\in\tic{\theta}$
	and let $\psi\in\Lin(G)$ be such that $\theta'=\theta\psi|_{N}$.
	Then $\hat{\theta}\psi|_{K}$ is a strong extension of $\theta'$.
	We want to compute $\bar{\mu}$ of $\theta'$ with respect to $\hat{\theta}\psi|_{K}$.
	For any $g\in L$, \eqref{eq:g-hat-theta-mu} yields
	\begin{align*}
	\leftexp{g}{(\hat{\theta}\psi|_{K})} & =\leftexp{g}{\hat{\theta}}\psi|_{K}=\hat{\theta}\psi_{g}|_{K}\psi|_{K}\cdot\mu(gN)\\
	& =(\hat{\theta}\psi|_{K})\psi_{g}|_{K}\cdot\mu(gN).
	\end{align*}
	Thus $\theta$ and $\theta'$ give rise to the same element
	$\bar{\mu}$, with respect to the strong extensions $\hat{\theta}$ and $\hat{\theta}\psi|_{K}$, respectively.
	By the independence of $[\bar{\mu}]$ on the choice of strong extension
	proved above, we conclude that $\theta$ and $\theta'$ give rise to the same element $[\bar{\mu}]$.
\end{proof}

\subsection{The function $\cT_{L,K,\Gamma}$}
\label{sec:The_function_cT_L_K_Gamma}
So far we have associated $[\bar{\mu}]\in \coho{1}(L/N,F_{K}/\Gamma)$
with a fixed class $\tic{\theta}\in\twirr(N)$. We now consider the
situation when $\tic{\theta}$ varies, but with $K$
and $L$ fixed. 

Let $L$ and $K$ be as in the beginning of Section~\ref{subsec:The-function-bar-mu}
and let $\Gamma$ be any subgroup of $\Lin(K/N)$.
Define 
\begin{align*}
\twirr_{L,K,\Gamma}(N) &= \{\tic{\theta}\in\twirr(N)\mid L=L_{\tic{\theta}},\, K=K_{\tic{\theta}},\, \Gamma=\Gamma_{K,\tic{\theta}}\},\\
\twirr^{\leq}_{L,K,\Gamma}(N) &= \{\tic{\theta}\in\twirr(N)\mid L\leq L_{\tic{\theta}},\,K\leq K_{\tic{\theta}},\,\Gamma=\Gamma_{K,\tic{\theta}}\},
\end{align*}
where $\Gamma_{K,\tic{\theta}}$ is as in Definition~\ref{def:Gamma}.
Note that $\twirr_{L,K,\Gamma}(N)$ may well be empty for some $L,K,\Gamma$.
Proposition~\ref{prop:function-mu-cohomology} implies that we may
define the following function
\[
\cT_{L,K,\Gamma}:\twirr^{\leq}_{L,K,\Gamma}(N) \longrightarrow \coho{1}(L/N,F_{K}/\Gamma),\qquad \cT_{L,K,\Gamma}(\tic{\theta})=[\bar{\mu}].
\]

\section{Reduction to pro-$p$ Sylow subgroups}\label{sec:Reduction-to-pro-p_twisted_case}

Throughout the present section, $G$ will denote a profinite group and $N$ a 
finite index normal pro-$p$ subgroup of $G$. Let $N\leq K\trianglelefteq L\leq G$
and $\Gamma$ be an arbitrary subgroup of $\Lin(K/N)$. 
For any prime $q$ dividing $|L/N|$, let $L_q$ be a subgroup of $L$ such that $L_q/N$ is a Sylow $q$-subgroup of $L/N$. 
Similarly, let $K_q$ be a subgroup of $K$ such that $K_q/N$ is a Sylow $q$-subgroup of $K/N$. We may and will assume that 
    \[
	K_p = K \cap L_p.
    \]

Let $H\leq G$ be a group that fixes $\theta$. We note that the function $\cC_H$ defined in \cite[\thmCliffordMap]{Rationality1}
induces a function on twist classes. Let $\theta'\in\tic{\theta}$ so that $\theta'=\theta\psi|_{N}$ for
some $\psi\in\Lin(G)$. Let $\hat{\theta}$ be a strong extension of $\theta$. Then $\hat{\theta}\psi|_{H}$ is a strong 
extension of $\theta'$ with the same factor set as that of $\hat{\theta}$, and thus 
\[
\cC_{H}(\theta)=\cC_{H}(\theta').
\]
This shows that the function $\cC_H$ is constant on the twist class $\tic{\theta}$, so $\cC_{H}(\tic{\theta})$ is well-defined.

The goal of this section (Proposition~\ref{prop:red_coeff_to_Sylow}) is to show that the invariant 
$\cT_{L,K,\Gamma}(\tic{\theta})$ attached to a twist class $\tic{\theta} \in \twirr^{\leq}_{L,K,\Gamma}(N)$ 
is determined by $\cC_{K_p}(\tic{\theta})$ together with $\cT_{L_p,K_p,\Gamma_p}(\tic{\theta})$, where $\Gamma_p$ is
the image of $\Gamma$ under the map defined by restricting homomorphisms of $K/N$ to $K_{p}/N$.

Let $q$ be a prime. We denote the $q$-primary component of a torsion abelian group $M$ by $M_{(q)}$ and write $m_{(q)}$ for the $q$-part of an element $m\in M$. 
\begin{lem}\label{lem:Z-BU_H-finite}
	Let $G$ be a finite group of order $m$ and $M$ be an abelian group
	(written multiplicatively) on which $G$ acts. Assume that $M$ is
	finitely divisible in the sense that for any positive integer $n$ and $a\in M$,
	there is a finite but non-zero number of elements $x\in M$ such that
	$x^{n}=a$. Then, for any integer $i\geq1$, we have
	\[
	\cocy{i}(G,M)=\cobo{i}(G,M)U^{i},
	\]
	where $U^{i}=\{\alpha\in \cocy{i}(G,M)\mid\alpha^{m}=1\}$. Moreover,
	$\coho{i}(G,M)$ is finite.
\end{lem}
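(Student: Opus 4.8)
The plan is to combine the standard fact that positive-degree cohomology of a finite group is annihilated by the group order with the existence part of the finite divisibility hypothesis, and then to use the finiteness part to bound $U^i$.

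First recall that for $i\geq 1$ the group $\coho{i}(G,M)$ is annihilated by $m=|G|$: restricting to the trivial subgroup kills everything in positive degree, and composing with corestriction gives multiplication by $m$. In the multiplicative notation this says precisely that $\alpha^m\in\cobo{i}(G,M)$ for every $\alpha\in\cocy{i}(G,M)$. Fix such an $\alpha$ and choose $\beta\in C^{i-1}(G,M)$ with $\partial\beta=\alpha^m$. Since $G$ is finite there are only finitely many tuples $(g_1,\dots,g_{i-1})\in G^{i-1}$, and by hypothesis each value $\beta(g_1,\dots,g_{i-1})$ admits at least one $m$-th root in $M$; choosing one for each tuple produces a cochain $\gamma\in C^{i-1}(G,M)$ with $\gamma^m=\beta$ pointwise. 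Because $\partial$ is a homomorphism of abelian groups and so is the $m$-th power map, these commute, giving $(\partial\gamma)^m=\partial(\gamma^m)=\partial\beta=\alpha^m$. Hence $\bigl(\alpha\,(\partial\gamma)^{-1}\bigr)^m=1$, so $\alpha\,(\partial\gamma)^{-1}\in U^i$, and therefore $\alpha=(\partial\gamma)\cdot\bigl(\alpha\,(\partial\gamma)^{-1}\bigr)\in\cobo{i}(G,M)\,U^i$. This proves $\cocy{i}(G,M)=\cobo{i}(G,M)\,U^i$.

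For the finiteness of $\coho{i}(G,M)$, note that all groups in sight are abelian, so from the identity just proved we get $\coho{i}(G,M)=\cocy{i}(G,M)/\cobo{i}(G,M)=\cobo{i}(G,M)\,U^i/\cobo{i}(G,M)\cong U^i/\bigl(U^i\cap\cobo{i}(G,M)\bigr)$, and it suffices to show $U^i$ is finite. An element of $U^i$ is a function $G^i\to M$ all of whose values lie in $M[m]:=\{x\in M\mid x^m=1\}$. Applying the finite divisibility hypothesis with $a=1$ shows that $M[m]$ is finite, and since $G^i$ is finite there are only finitely many functions $G^i\to M[m]$. Hence $U^i$ is finite, and so is $\coho{i}(G,M)$.

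The argument is essentially bookkeeping once the two ingredients are in place; the only point requiring care is that the hypothesis is used in both of its aspects — the ``non-zero'' part guarantees the root $\gamma$ needed to extract the $U^i$-factor, while the ``finite'' part is exactly what makes $U^i$, and hence $\coho{i}(G,M)$, finite. One should also keep in mind that the choice of cochain conventions (normalized versus unnormalized $C^{i-1}(G,M)$) is immaterial here, since $\cocy{i}$, $\cobo{i}$ and the resulting cohomology are the same in either case. The hypothesis $i\geq 1$ enters only through the annihilation statement, which fails in degree $0$.
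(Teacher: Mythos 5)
Your proof is correct and takes essentially the same route as the paper: both show $\alpha^m\in\cobo{i}(G,M)$ (the paper cites an earlier lemma, you reprove it via restriction--corestriction), then extract an $m$-th root of the underlying $(i-1)$-cochain to produce a coboundary $\partial\gamma$ with $(\partial\gamma)^m=\alpha^m$, and finally bound $U^i$ by functions $G^i\to M[m]$. The paper merely packages the root-extraction step as a separate claim that $\cobo{i}(G,M)$ is divisible, whereas you carry it out inline for the specific exponent $m$; the content is identical.
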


\begin{proof}
	We first prove that $\cobo{i}(G,M)$ is divisible. A function $\beta:G^{i}\rightarrow M$
	is in $\cobo{i}(G,M)$ if and only if it is of the form
	\begin{align*}
	\beta(g_{1},\dots,g_{i}) & =\leftexp{g_{1}}{f(g_{2},\dots,g_{i})}f(g_{1},\dots,g_{n})^{(-1)^{i}}\\
	& \hphantom{{}={}} \prod_{j=1}^{i-1}f(g_{1},\dots,g_{j-1},g_{j}g_{j+1},\dots,g_{i})^{(-1)^{j}}
	\end{align*}
	for some function $f:G^{i-1}\rightarrow M$, where $G^{0}:=\{1\}$
	(see, e.g., \cite[VII.3]{serre}). Let $\beta$ and $f$ be such that
	this holds. Since $M$ is divisible, there exists, for any positive integer $n$,
	a function $\widetilde{f}\in G^{i-1}\rightarrow M$ such that $\widetilde{f}^{n}=f$.
	Thus
	\begin{multline*}
	\beta(g_{1},\dots,g_{i})=\leftexp{g_{1}}{\widetilde{f}(g_{2},\dots,g_{i})^{n}}\widetilde{f}(g_{1},\dots,g_{n})^{(-1)^{i}n}\\
	\prod_{j=1}^{i-1}\widetilde{f}(g_{1},\dots,g_{j-1},g_{j}g_{j+1},\dots,g_{i})^{(-1)^{j}n}\\
	=\Big(\leftexp{g_{1}}{\widetilde{f}(g_{2},\dots,g_{i})}\widetilde{f}(g_{1},\dots,g_{n})^{(-1)^{i}}\prod_{j=1}^{i-1}\widetilde{f}(g_{1},\dots,g_{j-1},g_{j}g_{j+1},\dots,g_{i})^{(-1)^{j}}\Big)^{n},
	\end{multline*}
	so $\beta=\gamma^{n}$ for some $\gamma\in \cobo{i}(G,M)$.
	
	Let $\alpha\in \cocy{i}(G,M)$. By \cite[\lemBasicCoho\,(i)]{Rationality1}
	we have $\alpha^{m}\in \cobo{i}(G,M)$. Since $\cobo{i}(G,M)$ is divisible,
	there is a $\beta \in \cobo{i}(G,M)$ such that $\alpha^{m}=\beta^{m}$, and hence
	$\alpha \beta^{-1}\in U^{i}$. We thus have $\alpha\in \cobo{i}(G,M)U^{i}$
	and since $\alpha$ was arbitrary, $\cocy{i}(G,M)=\cobo{i}(G,M)U^{i}$.
	
	Now, every element in $U^{i}$ is a function $G^{i}\rightarrow\{a\in M\mid a^{m}=1\}$.
	The codomain is a finite set since $M$ is finitely divisible, so
	$U^{i}$ is finite and hence $\coho{i}(G,M)$ is finite, since it embeds in $U^i$.
\end{proof}

\subsection{Reduction of the parameter $L$}
In this subsection we will show that for $\tic{\theta}, \tic{\theta'} \in \twirr_{L,K,\Gamma}(N)$, 
	\[
	\cT_{L_p,K,\Gamma}(\tic{\theta}) = \cT_{L_p,K,\Gamma}(\tic{\theta'})
		\Longrightarrow
			\cT_{L,K,\Gamma}(\tic{\theta}) = \cT_{L,K,\Gamma}(\tic{\theta'}).
	\]
	(see Proposition~\ref{prop:red_L_to_Sylow}).
In order to prove this we need two lemmas. 
 Let $W\leq \C^{\times}$ be the group of roots of unity. Let 
$f \in \Func(K, \C^\times)$ and $\ell$ a prime. As in \cite[\subsecGoodBases]{Rationality1}, we will fix a homomorphism 
$\pi_{\ell}:  \C^{\times} \rightarrow W_{(\ell)}$ and denote $\pi_{\ell}\circ f$ by $f_{(\ell)}$. Then for any $g\in G$ normalising $K$ and any prime $\ell$, we have 
$\leftexp{g}{(f_{(\ell)})} = (\leftexp{g}{f})_{(\ell)}$. Recall also the notation $F_K=\Func(K/N,\C^{\times})$.
\begin{lem}
	\label{lem:pi_ell_mu-cobound}
	Let $q$ be a prime dividing $\lvert L/N \rvert$ and $\mu \in \cocy{1}(L_q/N, F_K)$.
	Suppose that there exists a function $\omega_q: K/N \to W_{(q)}$ such that for all $g \in L_q$,
	\[
	\mu(gN)_{(q)} = \leftexp{g}{\omega_q} \omega_q^{-1}\cdot\nu_g,
	\]
	for some $\nu_g\in\Gamma$.
	Then $\bar{\mu} \in \cobo{1}(L_q/N, F_K/\Gamma)$.
\end{lem}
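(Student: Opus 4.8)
The plan is to split $F_K$, as an $L_q/N$-module, into a ``$q$-primary part'' $A$ on which the hypothesis applies directly and a complementary part $B$ that contributes nothing to $\coho{1}$ of the $q$-group $L_q/N$.

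First I would set up the decomposition. The fixed homomorphism $\pi_q\colon\C^\times\to W_{(q)}$ restricts to the identity on $W_{(q)}$, so it is idempotent and $\C^\times=W_{(q)}\times C$ internally with $C=\ker\pi_q$. Since $\C^\times$ is divisible with $q$-primary torsion subgroup exactly $W_{(q)}$, the complement $C$ is divisible and $q$-torsion-free, hence uniquely $q$-divisible. Applying $\Func(K/N,-)$ yields a decomposition of $L_q/N$-modules
\[
F_K=A\times B,\qquad A=\Func(K/N,W_{(q)}),\quad B=\Func(K/N,C),
\]
with $B$ uniquely $q$-divisible; it is $L_q/N$-equivariant because conjugation commutes with $\pi_q$ (the identity $\leftexp{g}{(f_{(q)})}=(\leftexp{g}{f})_{(q)}$ recalled before the statement). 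Being finite abelian, $\Gamma$ decomposes as $\Gamma=\Gamma_{(q)}\times\Gamma_{(q')}$ into its $q$-part and its prime-to-$q$ part; since every element of $A$ has $q$-power order while $B$ has no $q$-torsion, we get $\Gamma_{(q)}\le A$ and $\Gamma_{(q')}\le B$, both $L_q/N$-submodules, so the splitting descends to $F_K/\Gamma=A/\Gamma_{(q)}\times B/\Gamma_{(q')}$ as $L_q/N$-modules. Projecting $\mu$ onto the two factors produces $1$-cocycles $\mu_A,\mu_B$ valued in $A,B$ with $\mu=\mu_A\mu_B$ and $\mu_A(gN)=\mu(gN)_{(q)}$.

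Next I would handle the two factors. On $A$: the hypothesis reads $\mu_A(gN)=\leftexp{g}{\omega_q}\,\omega_q^{-1}\,\nu_g$ with $\omega_q\in A$ and $\nu_g\in\Gamma$, and comparing values (all lying in $W_{(q)}$) forces $\nu_g\in\Gamma\cap A=\Gamma_{(q)}$; hence, modulo $\Gamma_{(q)}$, the cocycle $\mu_A$ equals $gN\mapsto\leftexp{g}{(\omega_q\Gamma_{(q)})}(\omega_q\Gamma_{(q)})^{-1}$, so its image in $\cocy{1}(L_q/N,A/\Gamma_{(q)})$ lies in $\cobo{1}(L_q/N,A/\Gamma_{(q)})$. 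On $B$: the group $B/\Gamma_{(q')}$ is uniquely $q$-divisible, being a quotient of the uniquely $q$-divisible group $B$ by the finite prime-to-$q$ (hence uniquely $q$-divisible) subgroup $\Gamma_{(q')}$; since $L_q/N$ is a $q$-group, $\coho{1}(L_q/N,B/\Gamma_{(q')})$ is annihilated by $|L_q/N|$, a power of $q$ (by \cite[\lemBasicCoho\,(i)]{Rationality1}), and at the same time carries an invertible multiplication-by-$q$, so it vanishes; thus the image of $\mu_B$ in $\cocy{1}(L_q/N,B/\Gamma_{(q')})$ is a coboundary as well. As a cocycle into a product module is a coboundary precisely when both of its components are, we conclude $\bar{\mu}\in\cobo{1}(L_q/N,F_K/\Gamma)$.

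The one genuine idea is the isolation of the $q$-primary part $A$ of $F_K$, where the hypothesis is used verbatim; on the complement $B$ — uniquely $q$-divisible — first cohomology of the $q$-group $L_q/N$ vanishes for free. The routine points are the unique $q$-divisibility of $C$ and the $L_q/N$-equivariance of the splittings of $\C^\times$, of $\Gamma$, and hence of $F_K/\Gamma$; I expect the fiddliest step to be the bookkeeping that makes $\Gamma$ respect the decomposition $F_K=A\times B$ (so that one may pass to the quotient $F_K/\Gamma$), although it is elementary.
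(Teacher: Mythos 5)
Your proof is correct, and it takes a genuinely different route from the paper. The paper invokes its Lemma~3.2 to factor the cocycle group multiplicatively as $Z^1(L_q/N,F_K)=U\cdot B^1(L_q/N,F_K)$ with $U$ the cocycles killed by $|L_q/N|$ (hence valued in $W_{(q)}$), writes $\mu=\tau\cdot(\text{coboundary})$, applies $\pi_q$, and combines with the hypothesis to exhibit $\bar\mu$ explicitly as a coboundary. You instead decompose the coefficient module $F_K=A\times B$ into its $q$-primary part $A=\Func(K/N,W_{(q)})$ and a uniquely $q$-divisible complement $B$, carry $\Gamma=\Gamma_{(q)}\times\Gamma_{(q')}$ along this splitting, and treat the two factors independently: the hypothesis is used verbatim on the $A$-component (after observing $\nu_g\in\Gamma\cap A=\Gamma_{(q)}$), while $H^1(L_q/N,B/\Gamma_{(q')})=0$ automatically because the $q$-group $L_q/N$ has vanishing positive cohomology with uniquely $q$-divisible coefficients. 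What your version buys is conceptual clarity --- it isolates exactly where the hypothesis bites ($A/\Gamma_{(q)}$) and explains the vanishing on the complement by a standard principle rather than by explicit manipulation; what the paper's version buys is economy, since Lemma~3.2 was already available and the same $\pi_q$-bookkeeping is reused later (e.g.\ in Proposition~3.5). Two small points to make fully rigorous in a write-up: that $\pi_q$ restricts to the identity on $W_{(q)}$ (the paper uses this implicitly when it asserts $\tau(gN)=\tau(gN)_{(q)}$, so it is part of the fixed choice of $\pi_q$), and that $\Gamma$ is $L_q/N$-invariant so that the primary decomposition $\Gamma=\Gamma_{(q)}\times\Gamma_{(q')}$ is one of $L_q/N$-submodules; the latter holds since conjugation is an automorphism of $\Gamma$ and therefore preserves element orders.
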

\begin{proof}
	Let $U = \lbrace \alpha \in \cocy{1}(L_q/N, F_K) \mid \alpha^{\lvert L_q/N \rvert} = 1 \rbrace$.
	Then, as $\C^{\times}$, and hence $F_K$, is finitely divisible, Lemma~\ref{lem:Z-BU_H-finite} implies that
	\[
	\cocy{1}(L_q/N, F_K) = U \cobo{1}(L_q/N, F_K).
	\]
	Thus there is a $\tau \in U$ and a function $\omega: K /N \to \C^{\times}$ 
	such that, for $g \in L_q$, $\mu(gN) = \tau(gN) \leftexp{g}{\omega}\omega^{-1}$.
	This implies that $ \mu(gN)_{(q)} = \tau(gN)_{(q)}\leftexp{g}{(\omega_{(q)})} {\omega}^{-1}_{(q)}$ and hence 
        \[
	        \tau(gN)_{(q)} = \mu(gN)_{(q)} (\leftexp{g}{(\omega_{(q)})} {\omega}^{-1}_{(q)})^{-1}.
        \]
	Combined with the equation
	$\mu(gN)_{(q)} = \leftexp{g}{\omega_q} \omega_q^{-1}\nu_g$ this implies that
	\[
	    \tau(gN)_{(q)} = \leftexp{g}{\omega_q} \omega_q^{-1} (\leftexp{g}{(\omega_{(q)})} {\omega}^{-1}_{(q)})^{-1} \nu_g 
				= \leftexp{g}{(\omega_q\omega_{(q)}^{-1})} (\omega_q\omega_{(q)}^{-1})^{-1} \nu_g.
	\]
	Since $\tau(gN)$ has values in $W_{(q)}$, we have $\tau(gN)=\tau(gN)_{(q)}$ and thus, for all $g\in L_q$,
	\begin{align*}
	    \mu(gN)     & = \tau(gN) \leftexp{g}{\omega}\omega^{-1} = 
				\leftexp{g}{(\omega_q\omega_{(q)}^{-1})} (\omega_q\omega_{(q)}^{-1})^{-1} \leftexp{g}{\omega}\omega^{-1} \nu_g \\
			& = \leftexp{g}{(\omega_q\omega_{(q)}^{-1}\omega)} (\omega_q\omega_{(q)}^{-1}\omega)^{-1}\nu_g.
	\end{align*}
	Hence the function $\bar{\mu} : L_q/N\rightarrow F_K/\Gamma$ is an element in $\cobo{1}(L_q/N, F_K/\Gamma)$.
\end{proof}
\begin{lem}\label{lem:triv_l_not_q}
	Let $\tic{\theta}\in \twirr_{L,K,\Gamma}(N)$. For every prime $q$ dividing $\lvert L/N \rvert$ such that 
	$q\neq p$, we have $\cT_{L_q,K,\Gamma}(\tic{\theta}) = 1$.
\end{lem}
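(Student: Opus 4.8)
The strategy is to verify the hypotheses of Lemma~\ref{lem:pi_ell_mu-cobound} for the prime $q$; this immediately yields $\bar\mu\in\cobo{1}(L_q/N,F_K/\Gamma)$, that is, $\cT_{L_q,K,\Gamma}(\tic\theta)=[\bar\mu]=1$. Concretely I need (a) a representative $\mu$ of $\bar\mu|_{L_q}$ lying in $\cocy{1}(L_q/N,F_K)$, and (b) a function $\omega_q\colon K/N\to W_{(q)}$ with $\mu(gN)_{(q)}=\leftexp{g}{\omega_q}\,\omega_q^{-1}\,\nu_g$ for some $\nu_g\in\Gamma$ and all $g\in L_q$. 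The arithmetic input driving everything is that $N$ is pro-$p$: the irreducible degree $\theta(1)$ and the determinantal order $o(\theta)=o(\det\theta)$ are both powers of $p$, while $L_q/N$ is a $q$-group with $q\neq p$.

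First I would normalise the choice of strong extension $\hat\theta$. For any prime $\ell\neq p$, a Sylow $\ell$-subgroup $K_\ell/N$ of $K/N$ still fixes $\theta$ (as $K=K_{\tic\theta}$), and since $\gcd(o(\theta),[K_\ell:N])=1$, the standard extension theorem for characters of determinantal order coprime to the index (see, e.g., \cite[(6.28)]{Isaacs}, applied to the finite quotient $K_\ell/\ker\theta$) shows $\theta$ extends to a genuine character of $K_\ell$, i.e.\ $\cC_{K_\ell}(\theta)=1$. Because restriction to a Sylow $\ell$-subgroup is injective on the $\ell$-primary part of $\coho{2}(K/N,\C^\times)$, this forces $\cC_K(\tic\theta)$ to be $p$-primary, and hence we may choose $\hat\theta$ with factor set $\alpha$ valued in $W_{(p)}$. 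Then for every $g\in L_q$ the factor set $\leftexp{g}{\alpha}\alpha^{-1}$ of $\mu(gN)$ is again $W_{(p)}$-valued, so $\mu(gN)_{(q)}$ is a genuine homomorphism $K/N\to W_{(q)}$, i.e.\ lies in $\Lin(K/N)$.

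To produce $\omega_q$ I would take determinants. Fix a projective representation $\widehat\Theta$ affording $\hat\theta$ and set $\delta=\det\widehat\Theta\colon K\to\C^\times$, so $\delta(x)\delta(y)=\alpha(xN,yN)^{\theta(1)}\delta(xy)$. Since $\leftexp{g}{\hat\theta}$ and $\hat\theta\,\psi_g|_K\cdot\mu(gN)$ are projective characters with the same factor set and the same values, the affording representations are equivalent, and taking determinants (which kills the intertwiner) gives $\leftexp{g}{\delta}=\delta\,(\psi_g|_K)^{\theta(1)}\,\mu(gN)^{\theta(1)}$. Applying $\pi_q$, and noting that $\delta|_N=\det\Theta$ and $\psi_g|_N$ have $p$-power order (so $\delta_{(q)}$ and $(\psi_g|_K)_{(q)}$ lie in $\Lin(K/N)$) while $\pi_q\circ\psi_g\in\Lin(G)$ is trivial on $N$ (so $(\psi_g|_K)_{(q)}=(\pi_q\circ\psi_g)|_K\in\Gamma$ by Definition~\ref{def:Gamma}), one obtains
\[
\bigl(\mu(gN)_{(q)}\bigr)^{\theta(1)}=\leftexp{g}{\delta_{(q)}}\,\delta_{(q)}^{-1}\cdot\bigl((\psi_g|_K)_{(q)}\bigr)^{-\theta(1)} .
\]
Now $\theta(1)$ is a power of $p$, hence invertible modulo the exponent of the finite $q$-group $\Hom(K/N,W_{(q)})$; choosing $\theta(1)'$ inverse there and raising the identity to the $\theta(1)'$th power gives $\mu(gN)_{(q)}=\leftexp{g}{\omega_q}\,\omega_q^{-1}\,\nu_g$ with $\omega_q=\delta_{(q)}^{\theta(1)'}\colon K/N\to W_{(q)}$ and $\nu_g=\bigl((\psi_g|_K)_{(q)}\bigr)^{-\theta(1)\theta(1)'}\in\Gamma$, which is hypothesis (b).

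It remains to arrange (a). By (the proof of) Proposition~\ref{prop:function-mu-cohomology} the map $c(g,g')=\mu(gN)\,\leftexp{g}{\mu(g'N)}\,\mu(gg'N)^{-1}$ takes values in $\Gamma$ and is a $\Gamma$-valued $2$-cocycle of $L_q/N$; its prime-to-$q$ part is a coboundary because $L_q/N$ is a $q$-group, and substituting the displayed formula for $\mu(gN)_{(q)}$ shows, the $\omega_q$-terms cancelling, that its $q$-part is $(g,g')\mapsto\nu_g\,\leftexp{g}{\nu_{g'}}\,\nu_{gg'}^{-1}$, the coboundary of $g\mapsto\nu_g$. Hence $[c]=1$ in $\coho{2}(L_q/N,\Gamma)$, and multiplying $\mu$ by a corresponding $\Gamma$-valued $1$-cochain — chosen compatibly with the primary decomposition of $\Gamma$ so as not to disturb (b) — makes $\mu$ a genuine cocycle without changing $\bar\mu|_{L_q}$. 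Lemma~\ref{lem:pi_ell_mu-cobound} then gives $\cT_{L_q,K,\Gamma}(\tic\theta)=1$. I expect step (b) to be the main obstacle: $\pi_q$ cannot be applied to $\hat\theta$ directly, since its values are not roots of unity, so the projectivity must first be concentrated at $p$ via the extension theorem and then handled through the scalar $\delta=\det\widehat\Theta$, the coprimality of $\theta(1)$ with $q$ being exactly what lets one recover $\mu(gN)_{(q)}$ itself and not merely its $\theta(1)$th power; the cocyclicity needed in (a) then falls out of the same determinant identity together with routine $q$-group cohomology.
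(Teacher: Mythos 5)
Your proposal is correct, and the core of the argument is exactly what the paper does: pass to $\xi=\det\circ\widehat\Theta$ (your $\delta$), take $q$-primary parts of the resulting identity $\leftexp{g}{\xi}\xi^{-1}=\mu(gN)^{\theta(1)}(\psi_g|_K)^{\theta(1)}$, use that $\theta(1)$ is a $p$-power coprime to $q$ to extract a root $\omega_q$, and observe that $(\psi_g|_K)_{(q)}\in\Gamma$ because $(\psi_g|_N)_{(q)}$ is trivial. The differences are in how you package the two ancillary points. For the root extraction, you first normalise $\hat\theta$ so that its factor set is $W_{(p)}$-valued (your Isaacs~(6.28) paragraph), so that $\delta_{(q)}$ and $\mu(gN)_{(q)}$ land in the finite group $\Lin(K/N)$ and an integer inverse $\theta(1)'$ of $\theta(1)$ modulo its exponent can be used; the paper instead takes the \emph{pointwise} $\theta(1)$-th root of $\xi_{(q)}$ inside $\Func(K,W_{(q)})$ (a bijection on $W_{(q)}$ since $\gcd(\theta(1),q)=1$) and then checks directly that this $\omega_q$ is constant on $N$-cosets, which dispenses with any normalisation of the factor set. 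For your step~(a), you flag that the lift $\mu$ of $\bar\mu$ is a priori only a $1$-cocycle modulo $\Gamma$, and you repair it via the $\Gamma$-valued obstruction $2$-cocycle $c$, using that its prime-to-$q$ part dies in $\coho{2}(L_q/N,\Gamma)$ and that substituting your formula for $\mu(gN)_{(q)}$ identifies $c_{(q)}$ with the coboundary of $g\mapsto\nu_g$; this is a valid and genuinely useful justification of a point the paper simply asserts (namely that $\mu$ may be taken in $\cocy{1}(L_q/N,F_K)$, which Lemma~\ref{lem:pi_ell_mu-cobound} requires). Note that the phrase ``chosen compatibly with the primary decomposition of $\Gamma$'' is superfluous: any $\Gamma$-valued $1$-cochain $\eta$ with $d\eta=c$ works, since $\nu_g\eta(gN)_{(q)}^{-1}$ is automatically in $\Gamma$.
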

\begin{proof}   
	Let $\Theta$ be a representation of $N$ affording the character $\theta$. 
	Then, for $g \in L_q$, there is a $\psi_g\in \Lin(G)$, 
	$P_g \in \GL_{\theta(1)}(\C)$ and $\mu \in \cocy{1}(L_q/N, F_K)$ such that 
		\begin{equation}
		\label{eq:mu_for_Theta}
			\leftexp{g}{\widehat{\Theta}} =  \,P_g^{-1} \widehat{\Theta} P_g \cdot \psi_g\lvert_K\cdot\mu(gN).
		\end{equation}
	
	By definition we have $\cT_{L_q,K,\Gamma}(\tic{\theta}) = [\bar{\mu}]$, so by Lemma~\ref{lem:pi_ell_mu-cobound} 
	it suffices to show that there is a function $\omega_q:\nolinebreak K/N \to W_{(q)}$ such that for all $g \in L_q$,
	\begin{equation}
	\label{eq:triviality_q}
	\mu(gN)_{(q)} = \leftexp{g}{\omega_q}\omega_q^{-1} \nu_g,
	\end{equation}
	for some $\nu_g\in \Gamma$.
	To prove this, let $\xi = \det  \circ \,\widehat{\Theta}$ so that $\xi \in \Func(K,\C^{\times})$ 
	(note that the use of this function is the reason we cannot work only with projective characters in this proof). 
	Then, by equation \eqref{eq:mu_for_Theta}, 
		\[
			\leftexp{g}{\xi}{\xi}^{-1} = \mu(gN)^{\theta(1)} (\psi_g \lvert_K)^{\theta(1)}.
		\]
	and hence 
	\begin{equation}
	\label{eq:xi}
	\leftexp{g}{\xi}_{(q)}\xi^{-1} = 
	\mu(gN)_{(q)}^{\theta(1)} (\psi_g\lvert_K)_{(q)}^{\theta(1)}.
	\end{equation}
	Now, $\theta(1)$ is a power of $p$ so it is coprime to $q$. This means that raising 
	to the power of $\theta(1)$ is an automorphism of $W_{(q)}$. Therefore there exists a unique function 
	$\omega_q: K\rightarrow W_{(q)}$ such that $\omega_q^{\theta(1)} = \xi_{(q)}$ and  \eqref{eq:xi} implies that 
	\[
	\leftexp{g}{\omega_q}\omega_q^{-1} = \mu(gN)_{(q)} (\psi_g\lvert_K)_{(q)}.
	\]    
	
	We finish the proof by showing that the last equality implies equation \eqref{eq:triviality_q}, that is, 
	that $(\psi_g\lvert_K)_{(q)}\in \Gamma$ and that $\omega_q$ is constant on cosets of $N$.
	First observe that $(\psi_g\lvert_N)_{(q)}$ 
	is a homomorphism from a pro-$p$ group
	to the $q$-group $W_{(q)}$, so it must be trivial. 
	By the definition of $\Gamma_{K, \tic{\theta}}$ (Definition~\ref{def:Gamma}), it therefore follows that 
	$(\psi_g\lvert_K)_{(q)} \in \Gamma_{K, \tic{\theta}} = \Gamma$. 
	
	It remains to show that $\omega_q$ is constant on the cosets of $N$ in $K$. Indeed, let $t \in K$ 
	and $n \in N$. Then $\widehat{\Theta}(tn) = \widehat{\Theta}(t) \Theta(n)$, so
	$\xi(tn) = \xi(t) \xi(n)$ and hence  $\xi_{(q)}(tn) = \xi_{(q)}(t) \xi_{(q)}(n)$. 
	Since $\xi \lvert_N = \det \circ \Theta$ is a homomorphism from the pro-$p$ group $N$ to $\C^{\times}$,  
	$\xi_{(q)}$ is trivial on $N$. It follows that  $\xi_{(q)}(tn) = \xi_{(q)}(t)$, so 
	$\omega_q(tn)^{\theta(1)}=\xi_{(q)}(tn)=\xi_{(q)}(t)=\omega_q(t)^{\theta(1)}$ and 
	thus $\omega_q(tn) = \omega_q(t)$.
\end{proof}
\begin{prop}
	\label{prop:red_L_to_Sylow}
	Let $\tic{\theta}, \tic{\theta'} \in \twirr_{L,K,\Gamma}(N)$. Then 
	\[
	\cT_{L_p,K,\Gamma}(\tic{\theta}) = \cT_{L_p,K,\Gamma}(\tic{\theta'})
	\Longrightarrow
	\cT_{L,K,\Gamma}(\tic{\theta}) = \cT_{L,K,\Gamma}(\tic{\theta'}).
	\]
\end{prop}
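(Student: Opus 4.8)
The plan is to reduce the statement to the Sylow‑wise detection of first cohomology, kill all primes $q\neq p$ by Lemma~\ref{lem:triv_l_not_q}, and handle $q=p$ using the hypothesis. Set
\[
c \coloneqq \cT_{L,K,\Gamma}(\tic{\theta})\,\cT_{L,K,\Gamma}(\tic{\theta'})^{-1}\in\coho{1}(L/N,F_{K}/\Gamma);
\]
it suffices to show $c=1$. For each prime $q$ dividing $\lvert L/N\rvert$ we have $N\leq L_{q}\leq L=L_{\tic{\theta}}$, and $L_{q}$ normalises $K$ because $K\trianglelefteq L$; together with $K=K_{\tic{\theta}}$ and $\Gamma=\Gamma_{K,\tic{\theta}}$ this shows $\tic{\theta},\tic{\theta'}\in\twirr^{\leq}_{L_{q},K,\Gamma}(N)$, so $\cT_{L_{q},K,\Gamma}(\tic{\theta})$ and $\cT_{L_{q},K,\Gamma}(\tic{\theta'})$ are defined. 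Fixing one representative $\theta$ and one strong extension $\hat{\theta}$ of $\theta$ to $K$, the crossed homomorphism $\bar{\mu}:L/N\to F_{K}/\Gamma$ representing $\cT_{L,K,\Gamma}(\tic{\theta})$ restricts, as a function on $L_{q}/N$, to a crossed homomorphism representing $\cT_{L_{q},K,\Gamma}(\tic{\theta})$ --- indeed the defining equation $\leftexp{g}{\hat{\theta}}=\hat{\theta}\psi_{g}|_{K}\cdot\mu(gN)$ for $g\in L_{q}$ is a special case of the one for $g\in L$. Hence
\[
\cT_{L_{q},K,\Gamma}(\tic{\theta})=\Res^{L/N}_{L_{q}/N}\cT_{L,K,\Gamma}(\tic{\theta}),
\]
and likewise for $\tic{\theta'}$, so that $\Res^{L/N}_{L_{q}/N}c=\cT_{L_{q},K,\Gamma}(\tic{\theta})\,\cT_{L_{q},K,\Gamma}(\tic{\theta'})^{-1}$.

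Next I would record that restriction to Sylow subgroups is injective on $\coho{1}(L/N,F_{K}/\Gamma)$. Since $F_{K}=\Func(K/N,\C^{\times})$ is a finite product of copies of the finitely divisible group $\C^{\times}$, it is finitely divisible, and $\Gamma\leq\Lin(K/N)$ is finite, so $F_{K}/\Gamma$ is again finitely divisible; by Lemma~\ref{lem:Z-BU_H-finite} the group $\coho{1}(L/N,F_{K}/\Gamma)$ is therefore finite, and it is annihilated by $m\coloneqq\lvert L/N\rvert$ (e.g.\ by \cite[\lemBasicCoho]{Rationality1}). Thus it is the internal direct sum of its $q$‑primary parts over the primes $q\mid m$, and for each such $q$ the composite $\operatorname{cor}^{L/N}_{L_{q}/N}\circ\Res^{L/N}_{L_{q}/N}$ is multiplication by the index $[L/N:L_{q}/N]$, which is prime to $q$ and hence invertible on the $q$‑primary part. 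So $\Res^{L/N}_{L_{q}/N}$ is injective on that $q$‑primary part, and consequently an element of $\coho{1}(L/N,F_{K}/\Gamma)$ that restricts trivially to $L_{q}/N$ for every $q\mid m$ must be trivial.

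It then remains to check that $\Res^{L/N}_{L_{q}/N}c=1$ for every prime $q\mid m$. For $q=p$ this is exactly the hypothesis $\cT_{L_{p},K,\Gamma}(\tic{\theta})=\cT_{L_{p},K,\Gamma}(\tic{\theta'})$; for $q\neq p$, Lemma~\ref{lem:triv_l_not_q} gives $\cT_{L_{q},K,\Gamma}(\tic{\theta})=1=\cT_{L_{q},K,\Gamma}(\tic{\theta'})$, so $\Res^{L/N}_{L_{q}/N}c=1$ in that case as well. Hence $c=1$, i.e.\ $\cT_{L,K,\Gamma}(\tic{\theta})=\cT_{L,K,\Gamma}(\tic{\theta'})$, as claimed. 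I do not expect a genuine obstacle in this argument: the real content sits in Lemma~\ref{lem:triv_l_not_q} (which is where $N$ being pro‑$p$ and $\theta(1)$ being a power of $p$ are used), while the rest is the routine corestriction--restriction argument together with the straightforward identification $\cT_{L_{q},K,\Gamma}(\tic{\theta})=\Res^{L/N}_{L_{q}/N}\cT_{L,K,\Gamma}(\tic{\theta})$; the only point needing a line of care is verifying that the $\cT_{L_{q},K,\Gamma}$ are well defined, which is why the conditions $N\leq L_{q}\leq L_{\tic{\theta}}$, $K\leq K_{\tic{\theta}}$ and ``$L_{q}$ normalises $K$'' were checked above.
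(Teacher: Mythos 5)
Your proof is correct and follows essentially the same route as the paper's: identify $\cT_{L_q,K,\Gamma}(\tic{\theta})$ with the restriction of $\cT_{L,K,\Gamma}(\tic{\theta})$ to $L_q/N$, invoke Lemma~\ref{lem:triv_l_not_q} to kill the primes $q \neq p$, apply the hypothesis at $q=p$, and conclude by injectivity of restriction to Sylow subgroups on primary components. The only cosmetic differences are that you re-derive the finiteness of $\coho{1}(L/N,F_K/\Gamma)$ from Lemma~\ref{lem:Z-BU_H-finite} and the injectivity of restriction via the corestriction--restriction composite, whereas the paper cites both facts from \cite[\lemBasicCoho]{Rationality1}, and that you phrase the conclusion in terms of $c=\cT_{L,K,\Gamma}(\tic{\theta})\,\cT_{L,K,\Gamma}(\tic{\theta'})^{-1}$ rather than comparing the primary components of the two invariants directly.
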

\begin{proof}
By \cite[\lemBasicCoho]{Rationality1}, $\coho{1}(L/N,F_{K}/\Gamma)$ is a torsion abelian group so we can write
\[
\cT_{L,K,\Gamma}(\tic{\theta}) = 
\prod_q
\cT_{L,K,\Gamma}(\tic{\theta})_{(q)},
\]
where $q$ runs through the primes dividing $\lvert L/N \rvert$.
      Let $q$ be a prime dividing $|L/N|$. By \cite[\lemBasicCoho]{Rationality1}, 
	    \[
		\res_{q}:\coho{1}(L/N,F_K/\Gamma)_{(q)}\longrightarrow\coho{1}(L_{q}/N,F_K/\Gamma)
	    \]
	is injective. We claim that 
	\begin{equation}
	\res_{q}(\cT_{L,K,\Gamma}(\tic{\theta})_{(q)})=\cT_{L_{q},K,\Gamma}(\tic{\theta})
	\label{eq:res(T_L-K-Gamma(theta))}
	\end{equation}
	(and similarly for $\theta'$). Indeed, letting $\mu$ be such that $\cT_{L,K,\Gamma}(\tic{\theta}) = [\bar{\mu}]$, 
	we have
		\[
			\res_{L/N,L_{q}/N}(\cT_{L,K,\Gamma}(\tic{\theta})) = [\bar{\mu}\lvert_{L_q}] = \cT_{L_{q},K,\Gamma}(\tic{\theta}),
		\]
	where the second equality holds by the definition of $\cT_{L_q,K,\Gamma}(\tic{\theta})$.
	Furthermore, since $\coho{1}(L_{q}/N,F_K/\Gamma)$ is a $q$-group, the homomorphism 
	$\res_{L/N,L_{q}/N}$ is trivial on $\coho{1}(L/N,F_K/\Gamma)_{(\ell)}$, for any prime
	$\ell\neq q$. Thus,  
	    \[
		\res_{L/N,L_{q}/N}(\cT_{L,K,\Gamma}(\tic{\theta}))
			=\res_{L/N,L_{q}/N}(\cT_{L,K,\Gamma}(\tic{\theta})_{(q)})=\res_{q}(\cT_{L,K,\Gamma}(\tic{\theta})_{(q)}),
	    \]
	proving \eqref{eq:res(T_L-K-Gamma(theta))}.

Now, if $q\neq p$, Lemma~\ref{lem:triv_l_not_q} implies that $\cT_{L_{q},K,\Gamma}(\tic{\theta})=1$ and
	by \eqref{eq:res(T_L-K-Gamma(theta))} we obtain $\res_{q}(\cT_{L,K,\Gamma}(\tic{\theta})_{(q)})=1$,
	whence $\cT_{L,K,\Gamma}(\tic{\theta})_{(q)}=1$ (by the injectivity of $\res_{q}$). We must therefore have 
	$\cT_{L,K,\Gamma}(\tic{\theta})=\cT_{L,K,\Gamma}(\tic{\theta})_{(p)}$, and since
	$\theta$ was arbitrary, we similarly have $\cT_{L,K,\Gamma}(\tic{\theta'})=\cT_{L,K,\Gamma}(\tic{\theta'})_{(p)}$.
	Applying \eqref{eq:res(T_L-K-Gamma(theta))} for $q=p$, we get
	    \[
		\res_{p}(\cT_{L,K,\Gamma}(\tic{\theta})_{(p)})=\cT_{L_p,K,\Gamma}(\tic{\theta})
			     = \cT_{L_p,K,\Gamma}(\tic{\theta'})=\res_{p}(\cT_{L,K,\Gamma}(\tic{\theta'})_{(p)}),
	    \]
	and we conclude that $\cT_{L,K,\Gamma}(\tic{\theta})_{(p)}=\cT_{L,K,\Gamma}(\tic{\theta'})_{(p)}$,
	and thus $\cT_{L,K,\Gamma}(\tic{\theta})=\cT_{L,K,\Gamma}(\tic{\theta'})$. 
\end{proof}
\subsection{Reduction of the coefficient module}
We have shown that $\cT_{L, K, \Gamma}(\tic{\theta})$ is determined by 
$\cT_{L_{p}, K, \Gamma}(\tic{\theta})\in \coho{1}(L_p/N,F_{K}/\Gamma)$. We will now further show that the latter 
is determined by an element in $\coho{1}(L_p/N,F_{K_p}/\Gamma_p)$ where 
    \[
        \Gamma_p = \{ \nu\lvert_\Kp \mid \nu \in \Gamma \}.
    \]
\subsubsection{Reduction of the parameter $\Gamma$}
We start by investigating the structure of $\Gamma_{K, \tic{\theta}}$.
\begin{defn}
We define
	\[	
		\Gamma_{K}^{0} = \{ \nu \in \Lin(K/N) \mid \nu = \varepsilon\lvert_K, \text{ for some } \varepsilon\in \Lin(G),\, \nu_{(p)} = 1\}.
	\]
\end{defn}
\begin{lem}
\label{lem:struct_Gamma}
Let $\theta \in \Irr(N)$ such that $K\leq \Stab_G(\theta)$. 
\begin{enumerate}
	\item  \label{lem:struct_Gamma_i}
	Then $\Gamma_{K, \tic{\theta}}$ splits as the 
	(internal) direct product
			\[
				\Gamma_{K, \tic{\theta}} = \Gamma_{K}^{0}\, \big(\Gamma_{K, \tic{\theta}}\big)_{(p)}
			\]
	where $\big(\Gamma_{K, \tic{\theta}}\big)_{(p)} = \{ \nu_{(p)} \mid \nu \in \Gamma \}$.
	\item \label{lem:struct_Gamma_ii}
		Moreover, let $\rho: \Lin(K) \to \Lin(\Kp)$ be the 
		homomorphism of abelian groups induced by restricting maps 
		on $K$ to maps on $\Kp$. Then $\Gamma_K^0 \leq \Ker \rho$ and $\rho$ restricted to 
		$\big(\Gamma_{K, \tic{\theta}}\big)_{(p)}$ is injective with image $\Gamma_{\Kp, \tic{\theta}}$.
	\end{enumerate}
\end{lem}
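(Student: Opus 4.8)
The plan is to prove both assertions together, exploiting throughout that $N$ is pro-$p$, that a strong extension of $\theta$ is unique up to multiplication by a homomorphism, and the non-vanishing supplied by Lemma~\ref{lem:theta-hat-non-trivial-on-coset-Burnside}. Fix a strong extension $\hat\theta$ of $\theta$ to $K$, and let $\pi_{p'}\colon\C^\times\to W_{(p')}$ be the projection complementary to $\pi_p$, so that on $W$ every finite-order character $\chi$ factors as $\chi=\chi_{(p)}\chi_{(p')}$, where $\chi_{(p')}:=\pi_{p'}\circ\chi$ takes values in $W_{(p')}$. Since $K/N$ is finite, $\Lin(K/N)$ is a finite abelian group; $\Gamma_K^{0}$ is visibly a subgroup, consisting of characters of order prime to $p$; and $(\Gamma_{K,\tic\theta})_{(p)}=\{\nu_{(p)}\mid\nu\in\Gamma_{K,\tic\theta}\}$ is the $p$-primary component of the subgroup $\Gamma_{K,\tic\theta}$, hence a subgroup of $p$-power order contained in $\Gamma_{K,\tic\theta}$. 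Consequently $\Gamma_K^{0}\cap(\Gamma_{K,\tic\theta})_{(p)}=1$, and since $\Gamma_K^{0}\subseteq\Gamma_{K,\tic\theta}$ (if $\nu=\varepsilon|_K$ with $\varepsilon\in\Lin(G)$, then $\nu$ itself certifies $\nu\in\Gamma_{K,\tic\theta}$ in Definition~\ref{def:Gamma}), the first assertion follows — in an abelian group the trivial intersection then gives the internal direct product — once we show $\Gamma_{K,\tic\theta}=\Gamma_K^{0}\,(\Gamma_{K,\tic\theta})_{(p)}$. For $\nu\in\Gamma_{K,\tic\theta}$, the element $\nu_{(p)}$ is a power of $\nu$, hence lies in $(\Gamma_{K,\tic\theta})_{(p)}$, whence $\nu_{(p')}=\nu\nu_{(p)}^{-1}\in\Gamma_{K,\tic\theta}$; thus it suffices to prove the claim: if $\lambda\in\Gamma_{K,\tic\theta}$ satisfies $\lambda_{(p)}=1$, then $\lambda\in\Gamma_K^{0}$.

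To prove the claim, pick $\varepsilon\in\Lin(G)$ with $\hat\theta\,\varepsilon|_K=\hat\theta\lambda$. Since $N$ is pro-$p$, $\varepsilon|_N$ takes values in $W_{(p)}$, so $\varepsilon_{(p')}|_N=(\varepsilon|_N)_{(p')}$ is trivial and $\varepsilon_{(p')}|_K\in\Lin(K/N)$. Given $x\in K$, Lemma~\ref{lem:theta-hat-non-trivial-on-coset-Burnside} provides $n\in N$ with $\hat\theta(xn)\neq0$, so evaluating $\hat\theta\,\varepsilon|_K=\hat\theta\lambda$ at $xn$ and cancelling gives $\varepsilon(xn)=\lambda(xn)=\lambda(x)$; applying $\pi_{p'}$ and using $\lambda(x)\in W_{(p')}$ and $\varepsilon(n)\in W_{(p)}$ yields $\varepsilon_{(p')}(x)=\varepsilon_{(p')}(xn)=\pi_{p'}(\varepsilon(xn))=\pi_{p'}(\lambda(x))=\lambda(x)$. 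Hence $\lambda=\varepsilon_{(p')}|_K$ with $\lambda_{(p)}=1$, i.e.\ $\lambda\in\Gamma_K^{0}$. This proves the first assertion, and also shows $\Gamma_K^{0}$ equals the prime-to-$p$ component of $\Gamma_{K,\tic\theta}$, so the decomposition is the primary decomposition.

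For the second assertion, observe that $\Kp$ is pro-$p$, being an extension of the finite $p$-group $\Kp/N$ by the pro-$p$ group $N$; therefore any $\nu\in\Lin(K)$ with values in $W_{(p')}$ restricts trivially to $\Kp$, and applying this to $\nu\in\Gamma_K^{0}$ gives $\Gamma_K^{0}\leq\Ker\rho$. Injectivity of $\rho$ on $(\Gamma_{K,\tic\theta})_{(p)}$ follows from \cite[\lemBasicCoho]{Rationality1}: as $\Kp/N$ is a Sylow $p$-subgroup of $K/N$, restriction $\coho{1}(K/N,\C^\times)_{(p)}=\Lin(K/N)_{(p)}\to\coho{1}(\Kp/N,\C^\times)=\Lin(\Kp/N)$ is injective, and $(\Gamma_{K,\tic\theta})_{(p)}\subseteq\Lin(K/N)_{(p)}$. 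It remains to show $\rho\big((\Gamma_{K,\tic\theta})_{(p)}\big)=\Gamma_{\Kp,\tic\theta}$, computed using the strong extension $\hat\theta|_{\Kp}$ of $\theta$ to $\Kp$. If $\nu\in(\Gamma_{K,\tic\theta})_{(p)}$, say $\hat\theta\,\varepsilon|_K=\hat\theta\nu$, then restricting to $\Kp$ shows $\rho(\nu)\in\Gamma_{\Kp,\tic\theta}$. Conversely, let $\nu'\in\Gamma_{\Kp,\tic\theta}$, witnessed by $\varepsilon\in\Lin(G)$; restricting the defining identity to $N$ gives $\theta\,\varepsilon|_N=\theta$, so $\hat\theta\,\varepsilon|_K$ is again a strong extension of $\theta$ to $K$ with the same factor set as $\hat\theta$ (as recalled at the start of this section), and therefore $\hat\theta\,\varepsilon|_K=\hat\theta\,\omega$ for some homomorphism $\omega\in\Lin(K/N)$. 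Then $\omega\in\Gamma_{K,\tic\theta}$, and restricting $\hat\theta\,\varepsilon|_K=\hat\theta\,\omega$ to $\Kp$ and invoking Lemma~\ref{lem:theta-hat-non-trivial-on-coset-Burnside} gives $\rho(\omega)=\nu'$. Finally, writing $\omega=\omega_{(p)}\omega_{(p')}$ as in the first assertion, $\omega_{(p')}\in\Gamma_K^{0}\subseteq\Ker\rho$, so $\nu'=\rho(\omega)=\rho(\omega_{(p)})\in\rho\big((\Gamma_{K,\tic\theta})_{(p)}\big)$, which finishes the proof.

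The step demanding most care is the claim in the first assertion: passing from the pointwise identity $\hat\theta\,\varepsilon|_K=\hat\theta\lambda$ of functions to the character identity $\varepsilon_{(p')}|_K=\lambda$ requires the non-vanishing of $\hat\theta$ on every coset of $N$ in order to cancel $\hat\theta$, and crucially uses that $N$ is pro-$p$ to separate the $p$- and $p'$-parts. The parallel point in the second assertion — verifying that $\hat\theta\,\varepsilon|_K$ really is a strong extension of $\theta$, so that $\omega$ is a genuine homomorphism rather than an arbitrary function — is the other delicate spot, but it is of the same type as arguments already made in \cite{Rationality1}.
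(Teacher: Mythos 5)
Your proof is correct, and it reaches the same conclusions by a noticeably different technical route, which is worth recording. Where the paper passes to a projective \emph{representation} $\widehat\Theta$ and extracts the key identity from $\det\circ\widehat\Theta$ (obtaining $(\varepsilon|_K)^{\theta(1)}=\nu^{\theta(1)}$ and then using that $\theta(1)$ is a power of $p$), you stay at the level of projective \emph{characters} and use the non-vanishing statement of Lemma~\ref{lem:theta-hat-non-trivial-on-coset-Burnside} to evaluate $\hat\theta\,\varepsilon|_K=\hat\theta\lambda$ at an $xn$ with $\hat\theta(xn)\neq 0$ and cancel. That gives $\varepsilon_{(p')}|_K=\lambda$ directly, which is arguably the more transparent argument and makes the pro-$p$ hypothesis on $N$ do the work visibly. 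Likewise, for injectivity of $\rho$ on $(\Gamma_{K,\tic\theta})_{(p)}$ you invoke the cohomological restriction--corestriction fact (\lemBasicCoho\ of \cite{Rationality1}) applied to $\coho{1}=\Hom$, whereas the paper argues structurally via the decomposition of $K/([K,K]N)$ into $q$-primary factors; both are standard and equivalent here. For the image computation in part~(ii) you again avoid the detour through $\widehat{\Theta}$ and the conjugating matrix $P$: you observe that $\hat\theta\,\varepsilon|_K$ is a strong extension of $\theta$ (because $\theta\,\varepsilon|_N=\theta$), deduce $\hat\theta\,\varepsilon|_K=\hat\theta\,\omega$ for a homomorphism $\omega\in\Gamma_{K,\tic\theta}$, and read off $\rho(\omega)=\nu'$ using the non-vanishing lemma. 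The only place you should spell out a little more is the line ``so $\hat\theta\,\varepsilon|_K$ is again a strong extension of $\theta$'': from $\theta\,\varepsilon|_N=\theta$ (equality of characters) one gets $\Theta\otimes\varepsilon|_N\cong\Theta$, and conjugating a projective representation affording $\hat\theta\,\varepsilon|_K$ by the intertwiner produces one that restricts on $N$ exactly to $\Theta$; this is what licenses the claim, and it is the same issue the paper handles by introducing $P$. Overall, your version buys a cleaner, purely character-theoretic argument that leans systematically on the non-vanishing lemma, at the price of one extra appeal to the cohomological restriction injectivity; the paper's version is more computational but self-contained in that respect.
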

	\begin{proof}
	We prove the first statement. Let $\hat{\theta}$ be a strong extension of $\theta$. 
	First $\Gamma_{K}^{0} \leq \Gamma_{\Kp, \tic{\theta}}$. Indeed, if $\nu \in \Gamma_{K}^{0}$, then $\nu\lvert_N = 1$ and 
	$\nu = \varepsilon\lvert_K$ for some $\varepsilon \in \Lin(G)$. We have $\varepsilon\lvert_N = 1$, so 
	$\theta \varepsilon\lvert_N = \theta$. Thus $\hat{\theta} \nu = \hat{\theta} \varepsilon_{K}$, so 
	$\nu \in \Gamma$. 
	Second, 
	$\Gamma_{K}^{0} \cap \big(\Gamma_{\Kp, \tic{\theta}}\big)_{(p)} = 1$ because, by definition, $\nu_{(p)} = 1$ for 
	all  $\nu \in \Gamma_{K}^{0}$.
	Let now  $\nu \in \Gamma$, so that 
		\[
			\nu = \prod_{q} \nu_{(q)}
		\]
	where the product runs over primes $q \mid \lvert K : N \rvert$. We prove that for 
	$q \neq p$, $\nu_{(q)} \in \Gamma_{K}^{0}$. Fix $q \neq p$. Since $(\nu_{(q)})_{(p)} = 1$,
	it suffices to show that $\nu_{(q)}$ is the restriction of a character in $\Lin(G)$. 
	In order to do so, let $\Theta$ be representation
	affording $\theta$ and let $\widehat{\Theta}$ be a strong extension of $\Theta$ to $K$. By 
	definition of $\Gamma_{\Kp, \tic{\theta}}$ we have that there are $P \in \GL_{\theta(1)}(\C)$ and $\varepsilon \in \Lin(G)$
	such that 
		\[
			\varepsilon\lvert_{K} \cdot\,  \widehat{\Theta} = \nu \cdot\,  P^{-1} \widehat{\Theta} P.
		\]
	This implies that
		\[
			\varepsilon\lvert_{K}^{\theta(1)} \cdot\,  (\det \circ \, \widehat{\Theta})= \nu^{\theta(1)} \cdot\,  (\det \circ\, \widehat{\Theta}).
		\]
	Hence $(\varepsilon\lvert_{K})^{\theta(1)} = \nu^{\theta(1)}$ and so 
	$((\varepsilon\lvert_{K})^{\theta(1)})_{(q)} = (\nu^{\theta(1)})_{(q)}$. The decomposition of 
	a root of unity into roots of unity of prime power order is multiplicative, hence
	$(\varepsilon_{(q)}\lvert_{K})^{\theta(1)} = \nu_{(q)}^{\theta(1)}$. 
	Since $q \neq p$ and $\theta(1)$ is a power of $p$, it follows that  
	$\varepsilon_{(q)}\lvert_{K}= \nu_{(q)}$.\par
	We prove the second part. Clearly $\Gamma_{K}^{0} \leq \Ker \rho$ because $\Kp$ is a pro-$p$ group. 
	Moreover, every 
	homomorphism $K/N \rightarrow W_{(p)}$ factors through
		\[
			\frac{K}{[K,K]N} =  \prod_{q} \frac{K_q [K,K]N}{[K,K]N}.
		\]
	where the product runs over primes $q \mid \lvert K : N \rvert$.
	For $q \neq p$, there are no non-trivial homomorphisms 
	$K_q[K, K]N/[K,K]N \rightarrow W_{(p)}$. Thus $\rho$ is injective on $\big(\Gamma_{\Kp, \tic{\theta}}\big)_{(p)}$ and
	we need only prove the statement about its image. To this end, let $\widehat{\Theta}_p = 
	\widehat{\Theta}\lvert_{\Kp}$ and let $\nu_p \in \Gamma_{\Kp, \tic{\theta}}$. Then 
	there are $P \in \GL_{\theta(1)}(\C)$ and $\varepsilon \in \Lin(G)$ such that
		\begin{equation*}
			\varepsilon\lvert_{\Kp} \cdot\, \widehat{\Theta}_p = \nu_p P^{-1} \widehat{\Theta}_p P.
		\end{equation*}
	Restricting both sides of the last equality to $N$ we have that 
	$\varepsilon\lvert_{N} \cdot\,  \Theta = P^{-1} \Theta P$, so $\varepsilon\lvert_{K} \cdot\,  \widehat{\Theta}$ and 
	$P^{-1} \widehat{\Theta} P$ are both strong extensions of $\varepsilon\lvert_{N} \cdot\,  \Theta$. Thus there
	is a scalar function $\nu:K/N \to \C^{\times}$ such that
		\[
			\nu \cdot I_{\theta(1)}= \varepsilon\lvert_{K} \cdot\,  \widehat{\Theta} P^{-1} \widehat{\Theta}^{-1} P.
		\]
	By its very definition, $\nu \in \Gamma$ and so $\nu_{(p)} \in \big(\Gamma_{\Kp, \tic{\theta}}\big)_{(p)}$. 
	This is enough to conclude, because $\Kp$ is a pro-$p$ group and hence 
	we have $\nu_{(p)}\lvert_{\Kp} = \nu\lvert_{\Kp} = \nu_p$.	
	\end{proof}
The following consequence of the structure of $\Gamma_{K, \tic{\theta}}$ will achieve the 
goal of this subsection and will also be key to producing a first order formula 
for the predicate $\Gamma_{K, \tic{\theta}} = \Gamma$ in Section~\ref{sec:rationality_partial_tw}.
\begin{prop}
\label{prop:red_Gamma}
Let $\tic{\theta} \in  \twirr(N)$ such that $K \leq K_{\tic{\theta}}$. 
Assume there exists $\tic{\theta}' \in  \twirr(N)$ such that $\Gamma_{K, \tic{\theta}'} = \Gamma$.
Then $\Gamma_{K,\tic{\theta}} = \Gamma$ if and only if $\Gamma_{\Kp, \tic{\theta}} = \Gamma_p$.
\end{prop}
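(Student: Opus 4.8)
The plan is to reduce the whole statement to the structure result Lemma~\ref{lem:struct_Gamma}, together with the observation that the restriction homomorphism $\rho\colon\Lin(K/N)\to\Lin(\Kp/N)$, $\nu\mapsto\nu\lvert_{\Kp}$, is \emph{injective on the $p$-primary part} $\Lin(K/N)_{(p)}$. This last fact is not stated verbatim in Lemma~\ref{lem:struct_Gamma}, but it is exactly what the proof of its second assertion establishes: every homomorphism $K/N\to W_{(p)}$ factors through $K/[K,K]N=\prod_q K_q[K,K]N/[K,K]N$, and since for $q\neq p$ there is no non-trivial homomorphism $K_q[K,K]N/[K,K]N\to W_{(p)}$, such a homomorphism is determined by its restriction to $\Kp$. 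As $\Lin(K/N)_{(p)}=\Hom(K/N,W_{(p)})$, this gives the claimed injectivity.

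First I would unpack the hypothesis that $\Gamma=\Gamma_{K,\tic{\theta}'}$ for some twist class $\tic{\theta}'$ (with $K\leq K_{\tic{\theta}'}$, which is implicit in $\Gamma_{K,\tic{\theta}'}$ being defined). By the first assertion of Lemma~\ref{lem:struct_Gamma}, $\Gamma=\Gamma_K^0\cdot(\Gamma)_{(p)}$ is an internal direct product, and by the second assertion $\Gamma_K^0\leq\Ker\rho$ while $\rho$ maps $(\Gamma)_{(p)}$ injectively onto $\Gamma_{\Kp,\tic{\theta}'}$. Applying $\rho$ to this decomposition I get
\[
\Gamma_p=\rho(\Gamma)=\rho\bigl((\Gamma)_{(p)}\bigr)=\Gamma_{\Kp,\tic{\theta}'}.
\]
The same lemma applied to $\tic{\theta}$ (legitimate since $K\leq K_{\tic{\theta}}$ by hypothesis) gives $\Gamma_{K,\tic{\theta}}=\Gamma_K^0\cdot(\Gamma_{K,\tic{\theta}})_{(p)}$ and $\rho\bigl((\Gamma_{K,\tic{\theta}})_{(p)}\bigr)=\Gamma_{\Kp,\tic{\theta}}$. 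Note that the factor $\Gamma_K^0$ is the same in both decompositions, since it depends only on $K$, $N$, $G$ and not on the twist class.

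With these two decompositions in hand, both implications are formal. For the forward direction, if $\Gamma_{K,\tic{\theta}}=\Gamma$ then their $p$-primary components coincide, $(\Gamma_{K,\tic{\theta}})_{(p)}=(\Gamma)_{(p)}$, and applying $\rho$ yields $\Gamma_{\Kp,\tic{\theta}}=\rho\bigl((\Gamma)_{(p)}\bigr)=\Gamma_p$. For the converse, assume $\Gamma_{\Kp,\tic{\theta}}=\Gamma_p$; then
\[
\rho\bigl((\Gamma_{K,\tic{\theta}})_{(p)}\bigr)=\Gamma_{\Kp,\tic{\theta}}=\Gamma_p=\rho\bigl((\Gamma)_{(p)}\bigr),
\]
and since $(\Gamma_{K,\tic{\theta}})_{(p)}$ and $(\Gamma)_{(p)}$ both lie in $\Lin(K/N)_{(p)}$, on which $\rho$ is injective, we conclude $(\Gamma_{K,\tic{\theta}})_{(p)}=(\Gamma)_{(p)}$. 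Combining this with the common factor $\Gamma_K^0$ gives $\Gamma_{K,\tic{\theta}}=\Gamma_K^0\cdot(\Gamma_{K,\tic{\theta}})_{(p)}=\Gamma_K^0\cdot(\Gamma)_{(p)}=\Gamma$.

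The only point that needs care---and the one I would flag as the main obstacle, though it is a mild one---is that Lemma~\ref{lem:struct_Gamma} only records injectivity of $\rho$ on each individual subgroup $(\Gamma_{K,\tic{\theta}})_{(p)}$, whereas to compare two such subgroups with the same image one needs injectivity on the ambient group $\Lin(K/N)_{(p)}$. As explained above, this stronger statement is already contained in the proof of Lemma~\ref{lem:struct_Gamma}, so it suffices to isolate it (or reprove it in one line); no new idea is required, and everything else is bookkeeping with the direct-product decomposition.
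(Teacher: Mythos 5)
Your proof is correct and follows essentially the same route as the paper's: decompose $\Gamma_{K,\tic{\theta}}$ and $\Gamma$ via \lemSameFs-style structure as $\Gamma_K^0\cdot(\,\cdot\,)_{(p)}$ using Lemma~\ref{lem:struct_Gamma}(i), and compare $p$-parts via the restriction map $\rho$ using Lemma~\ref{lem:struct_Gamma}(ii). The point you flag---that injectivity of $\rho$ is needed on all of $\Hom(K/N,W_{(p)})$ rather than merely on a single $(\Gamma_{K,\tic{\theta}})_{(p)}$---is indeed used implicitly in the paper's terse argument and is, as you note, established inside the proof of Lemma~\ref{lem:struct_Gamma}(ii).
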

\begin{proof}
		Part \ref{lem:struct_Gamma_ii}
		of Lemma~\ref{lem:struct_Gamma} gives that 
			\[
				\Gamma_{\Kp, \tic{\theta}} = \Gamma_p \iff \big(\Gamma_{\Kp, \tic{\theta}}\big)_{(p)} = \Gamma_{(p)}.
			\] 
		By part \ref{lem:struct_Gamma_i} of Lemma~\ref{lem:struct_Gamma}, 
		the latter is equivalent to $\Gamma_{\Kp, \tic{\theta}}  = \big(\Gamma_{K}^{0} \Gamma_{\Kp, \tic{\theta}}\big)_{(p)} 
		=  \Gamma_{K}^{0}\Gamma_{(p)} = \Gamma$.
	\end{proof}
\subsubsection{Reduction of the parameter $K$}
\label{subsec:red_coeff_mod} 
In what follows, we let $\theta \in \twirr_{L,K,\Gamma}(N)$. Proposition~\ref{prop:red_Gamma} implies that 
    \[
	\twirr_{L,K,\Gamma}(N) \subseteq \twirr_{L_p, K_p, \Gamma_p}^{\leq}(N)
    \]
(see Section~\ref{sec:The_function_cT_L_K_Gamma} for the definitions of these sets)
and therefore, for $\tic{\theta} \in \twirr_{L,K,\Gamma}(N)$, 
the element $\cT_{L_{p}, K_{p}, \Gamma_p} (\tic{\theta})\in \coho{1}(L_p/N,F_{K_p}/\Gamma_p)$ is well-defined. 
The following proposition shows that $\cT_{L_{p}, K, \Gamma}(\tic{\theta})$
is determined by $\cT_{L_{p}, K_p, \Gamma_p}(\tic{\theta})$.
\begin{prop}
	\label{prop:red_coeff_to_Sylow}
	Let $\tic{\theta}, \tic{\theta'} \in \twirr_{L,K,\Gamma}(N)$ and assume that 
	$\cC_{K_{p}}(\tic{\theta}) = \cC_{K_{p}}(\tic{\theta'})$. Then
	\[
	 \cT_{L_{p}, K_{p}, \Gamma_p} (\tic{\theta})= \cT_{L_{p}, K_{p}, \Gamma_p}(\tic{\theta'})
	 \Longrightarrow
	\cT_{L, K, \Gamma} (\tic{\theta})= \cT_{L, K, \Gamma}(\tic{\theta'}).
	\]
\end{prop}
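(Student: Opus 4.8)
The plan is to combine Proposition~\ref{prop:red_L_to_Sylow}, which already reduces the parameter $L$ to its pro-$p$ Sylow $L_p$, with a further reduction of the coefficient module from $F_K/\Gamma$ to $F_{K_p}/\Gamma_p$. By Proposition~\ref{prop:red_L_to_Sylow}, it suffices to prove
\[
\cT_{L_{p}, K_{p}, \Gamma_p} (\tic{\theta})= \cT_{L_{p}, K_{p}, \Gamma_p}(\tic{\theta'})
\Longrightarrow
\cT_{L_{p}, K, \Gamma} (\tic{\theta})= \cT_{L_{p}, K, \Gamma}(\tic{\theta'}),
\]
under the hypothesis $\cC_{K_p}(\tic{\theta}) = \cC_{K_p}(\tic{\theta'})$. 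The key construction is a map of $L_p/N$-modules relating the two coefficient groups. Restriction of functions from $K/N$ to $K_p/N$ gives an $L_p/N$-equivariant homomorphism $\rho_* : F_K \to F_{K_p}$ (equivariance holds because $L_p$ normalises $K$ and $K_p = K \cap L_p$ is stable under $L_p$-conjugation), and by Lemma~\ref{lem:struct_Gamma}\ref{lem:struct_Gamma_ii} it carries $\Gamma$ into $\Gamma_p$ with kernel $\Gamma^0_K$, inducing $\bar\rho_* : F_K/\Gamma \to F_{K_p}/\Gamma_p$. This yields a homomorphism on cohomology $\coho{1}(L_p/N, F_K/\Gamma) \to \coho{1}(L_p/N, F_{K_p}/\Gamma_p)$ sending $\cT_{L_p,K,\Gamma}(\tic\theta)$ to $\cT_{L_p,K_p,\Gamma_p}(\tic\theta)$, since restricting a strong extension $\hat\theta$ of $\theta$ to $K_p$ gives a strong extension $\hat\theta|_{K_p}$ whose associated $\mu$-function is exactly $\mu(gN)|_{K_p}$.

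The heart of the argument is then to show this cohomology map is injective on the relevant classes, or at least that classes agreeing after applying it already agree. The obstruction to injectivity lives in the kernel of $\bar\rho_*$, governed by $\Gamma^0_K \subseteq F_K$, and here is where the hypothesis $\cC_{K_p}(\tic\theta) = \cC_{K_p}(\tic\theta')$ enters: equality of the $\coho{2}(K_p/N,\C^\times)$-valued Clifford invariants means the chosen strong extensions $\hat\theta$ and $\hat\theta'$ can be taken to have factor sets agreeing on $K_p$ up to a coboundary, so that the difference $\mu - \mu'$ of the two $\mu$-functions, which a priori only has values in $F_K$, in fact takes values in $\Lin(K/N)$ (it has trivial factor set on $K$, not merely on $K_p$) after adjusting the extensions. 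Combined with the analysis of $\Gamma^0_K$ — characters of $K/N$ that are restrictions from $\Lin(G)$ and are trivial on the pro-$p$ part — one shows the difference class $\cT_{L_p,K,\Gamma}(\tic\theta)\cT_{L_p,K,\Gamma}(\tic\theta')^{-1}$ is represented by a cocycle $L_p/N \to \Gamma^0_K \cdot (\text{something})/\Gamma$; but the target of a cocycle from the $p$-group $L_p/N$ into the $p'$-torsion-containing piece coming from $\Gamma^0_K$ must be trivial, exactly as in the proof of Lemma~\ref{lem:triv_l_not_q}. More precisely, one argues that the non-$p$ part of the difference is automatically a coboundary (or zero) because $L_p/N$ is a pro-$p$ group acting on a module whose relevant quotient is $p'$-torsion, while the $p$-part of the difference is precisely what is detected by $\cT_{L_p,K_p,\Gamma_p}$, and that is assumed equal.

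I expect the main obstacle to be making precise the claim that $\cC_{K_p}(\tic\theta) = \cC_{K_p}(\tic\theta')$ allows one to choose the strong extensions $\hat\theta$, $\hat\theta'$ compatibly enough that the comparison of $\mu$-functions takes values in the small module $\Gamma^0_K \cdot \Gamma / \Gamma$ rather than all of $F_K/\Gamma$ — this requires carefully tracking how changing a strong extension by a function $\omega \in F_K$ changes $\mu$ (via the coboundary $\leftexp{g}{\omega}\omega^{-1}$ as in \eqref{eq:another-strong-extn}), and using that the factor-set obstruction to a \emph{global} (on $K$) strong extension with prescribed behaviour is measured by $\cC_K$, whose $p$-part is $\cC_{K_p}$ and whose $p'$-part is controlled automatically since $\theta(1)$ is a power of $p$. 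Once that compatibility is secured, the cohomological bookkeeping — splitting into $p$-part and $p'$-part, applying injectivity of $\res_p$ from Lemma~\ref{lem:Z-BU_H-finite}/\cite[\lemBasicCoho]{Rationality1} on the $p$-part, and vanishing of the $p'$-part over the pro-$p$ group $L_p/N$ — follows the now-familiar pattern of Proposition~\ref{prop:red_L_to_Sylow}.
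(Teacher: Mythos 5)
Your proposal gets the initial reduction right (apply Proposition~\ref{prop:red_L_to_Sylow}, then use $\cC_{K_p}(\tic\theta)=\cC_{K_p}(\tic\theta')$ via \cite[\lemJaikinsProp]{Rationality1} and \cite[\lemSameFs]{Rationality1} to obtain strong extensions $\hat\theta,\hat\theta'$ with the \emph{same} factor set on $K$, so that $\mu(gN)^{-1}\mu'(gN)\in\Lin(K/N)$), and it correctly identifies that the remaining work is to pass from information on $K_p$ to information on $K$. That matches the paper up to this point.

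However, the way you plan to carry out the passage from $K_p$ to $K$ has a genuine gap. You describe the restriction map $\rho_*:F_K\to F_{K_p}$ and assert its kernel is ``governed by $\Gamma^0_K$,'' invoking Lemma~\ref{lem:struct_Gamma}\ref{lem:struct_Gamma_ii}. But that part of the lemma only concerns the restriction of the subgroup $\Lin(K)\supseteq\Gamma$ to $\Lin(K_p)$; the kernel of the full map $F_K\to F_{K_p}$ is the set of all functions $K/N\to\C^\times$ trivial on $K_p/N$, which is far larger than $\Gamma^0_K$. Consequently the induced map $\coho{1}(L_p/N,F_K/\Gamma)\to\coho{1}(L_p/N,F_{K_p}/\Gamma_p)$ is nowhere near injective, and your fallback --- ``injectivity of $\res_p$'' together with vanishing of the $p'$-part --- is the argument from Proposition~\ref{prop:red_L_to_Sylow} for shrinking the \emph{acting group} from $L$ to $L_p$. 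It does not apply here, where the acting group is already $L_p/N$ and what changes is the \emph{coefficient module} from $F_K/\Gamma$ to $F_{K_p}/\Gamma_p$; there is no $\res_p$ to appeal to. The step you are missing is the explicit extension argument: from the hypothesis $\cT_{L_p,K_p,\Gamma_p}(\tic\theta)=\cT_{L_p,K_p,\Gamma_p}(\tic\theta')$ one gets a function $\eta:K_p/N\to\C^\times$ with $\mu^{-1}\mu'|_{K_p}=\leftexp{g}\eta\eta^{-1}$ modulo $\Gamma_p$; since $\leftexp{g}\eta\eta^{-1}$ is the restriction of a character in $\Lin(K/N)$, $\eta$ descends to $K_p[K,K]N/[K,K]N$, and one must then extend $\eta$ to a function $\hat\eta$ on $K/[K,K]N$ by setting it to be $1$ on the Sylow $q$-factors for $q\neq p$. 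It is precisely this extension, together with the observation that $\mu(gN)_{(p)}^{-1}\mu'(gN)_{(p)}$ is a homomorphism into $W_{(p)}$ (hence trivial on those $q$-factors), that yields the global identity $\mu(gN)_{(p)}^{-1}\mu'(gN)_{(p)}=\leftexp{g}{\hat\eta_{(p)}}\hat\eta_{(p)}^{-1}$ on all of $K$, so that Lemma~\ref{lem:pi_ell_mu-cobound} can be applied. Without this concrete extension, there is no mechanism in your outline to upgrade the coboundary condition on $K_p$ to a coboundary condition on $K$.
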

\begin{proof}
	By Proposition~\ref{prop:red_L_to_Sylow} it suffices to prove that 
		\begin{equation*}
		\cT_{L_p, K_p, \Gamma_p} (\tic{\theta})= \cT_{L_p, K_p, \Gamma_p}(\tic{\theta'})
			\Longrightarrow
				\cT_{L_p, K, \Gamma} (\tic{\theta}) = 
				\cT_{L_p, K, \Gamma}(\tic{\theta'}).
		\end{equation*} 
	 By \cite[\lemJaikinsProp]{Rationality1} our hypothesis $\cC_{K_{p}}(\tic{\theta}) = \cC_{K_{p}}(\tic{\theta'})$ implies that 
	$\cC_{K}(\tic{\theta}) = \cC_{K}(\tic{\theta'})$. 	
	Therefore, by \cite[\lemSameFs]{Rationality1},
	there exist strong extensions  $\hat{\theta}$ and $\hat{\theta}'$ with the same factor set.
	Thus there exist $\mu, \mu' \in  \cocy{1}(L_{p}/N, F_K)$, such that for all $g \in L_{p}$ there are $\psi_g, \psi'_g\in \Lin(G)$ 
	with 
		\[
			\leftexp{g}{\hat{\theta}}   = \hat{\theta} \psi_g\lvert_K \cdot  \mu(gN) \qquad \text{and} \qquad
			\leftexp{g}{\hat{\theta}'}  = \hat{\theta}' \psi'_g\lvert_K \cdot  \mu'(gN).
		\]
	Since $\hat{\theta}$ and $\hat{\theta}'$ have the same factor set, we have
		\begin{equation}
	   	\label{eq:mu-mu_prime-hom}
			\mu(gN)^{-1} \mu'(gN) \in \Lin(K/N)=\Hom(K/N, \C^{\times}).
		\end{equation}
	Assume now that 
	$\cT_{L_{p}, K_{p}, \Gamma_p} (\tic{\theta})= \cT_{L_{p}, K_{p}, \Gamma_p}(\tic{\theta'})$.
	Then there is a function $\eta: K_{p}/N \rightarrow \C^{\times}$ such that, for any $g\in L_{p}$,
		\[
			(\mu(gN)^{-1}\lvert_{K_{p}}) (\mu'(gN)\lvert_{K_{p}}) \Gamma_p
				= \leftexp{g}{\eta}\eta^{-1} \Gamma_p.
		\]
		
	Changing $\psi_g$ and $\psi'_g$ if necessary we may without loss of generality assume that
	    \begin{equation}
	    \label{eq:triv_K_p}
		\mu(gN)^{-1}\lvert_{K_{p}} \mu'(gN)\lvert_{K_{p}} = \leftexp{g}{\eta}\eta^{-1}.
	    \end{equation}
	
	By \eqref{eq:mu-mu_prime-hom} and \eqref{eq:triv_K_p}, 
	$\leftexp{g}{\eta}\eta^{-1}$ is the restriction of an element in $\Lin(K/N)$, hence
	it is trivial on $(K_{p}\cap [K,K])N$. 
	By the second isomorphism theorem, $\eta$ defines a function on $K_{p}[K,K]N$, which is constant on cosets of $[K,K]N$. 
	By abuse of notation, we denote this function by $\eta$ as well. 
	The finite abelian group $K/([K,K]N)$ factors as 
	    \begin{equation}
	    \label{eq:splitting_K_KKN}
		\frac{K}{[K,K]N} = \frac{K_{p}[K,K]N}{[K,K]N} \prod_q
		\frac{K_q[K,K]N}{[K,K]N},
	    \end{equation}
	where $q$ runs through the primes dividing  $|K: K_{p}|$.
	Extend $\eta$ to the function $\hat{\eta}: K/([K,K]N) \rightarrow \C^{\times}$ such that $\hat{\eta} = 1$ on $\frac{K_{q}[K,K]N}{[K,K]N}$ for every $q\neq p$. 
	By \eqref{eq:mu-mu_prime-hom} and \eqref{eq:triv_K_p}, the function $\leftexp{g}{\hat{\eta}}\hat{\eta}^{-1}$ is 
	a homomorphism on $\frac{K_{p}[K,K]N}{[K,K]N}$ 
	and it is trivial on each of the other factors on the 
	right-hand side of \eqref{eq:splitting_K_KKN}. Thus $\leftexp{g}{\hat{\eta}}\hat{\eta}^{-1}$ is a homomorphism $K/N \rightarrow \C^{\times}$. 
	Therefore 
	    \[
		(\leftexp{g}{\hat{\eta}}\hat{\eta}^{-1})_{(p)} = \leftexp{g}{\hat{\eta}_{(p)}}\hat{\eta}_{(p)}^{-1}
	    \]
	is a homomorphism $K/N \rightarrow W_{(p)}$ and, by \eqref{eq:triv_K_p},
	    \[
		\mu(gN)_{(p)}^{-1}\lvert_{K_{p}}  \mu'(gN)_{(p)}\lvert_{K_{p}} 
		    = \leftexp{g}{\eta_{(p)}} \eta_{(p)}^{-1} 
			=\leftexp{g}{\hat{\eta}_{(p)}} \hat{\eta}_{(p)}^{-1}|_{K_p}.
	    \] 
	Since $\mu(gN)_{(p)}^{-1} \mu'(gN)_{(p)}$ is a homomorphism from $K/([K,K]N)$ to the $p$-group $W_{(p)}$, it is trivial on every factor $\frac{K_{q}[K,K]N}{[K,K]N}$, $q\neq p$, in \eqref{eq:splitting_K_KKN}. We therefore have 
	    \[
		\mu(gN)_{(p)}^{-1}  \mu'(gN)_{(p)} = \leftexp{g}{\hat{\eta}_{(p)}}\hat{\eta}_{(p)}^{-1},
	    \]
	for any $g\in L_p$, so by Lemma~\ref{lem:pi_ell_mu-cobound},
	    \[
		\bar{\mu^{-1} \mu'} \in \cobo{1}(L_p/N,K/\Gamma),
	    \]
	that is,
	    \[
		\cT_{L_p, K, \Gamma} (\tic{\theta}) = 
			    \cT_{L_p, K, \Gamma}(\tic{\theta'}).
	    \]
\end{proof}

\section{Reduction to the partial twist zeta series}
\label{sec:Reduction partial twist}

\subsection{Dirichlet polynomials for twist character triples}
If $\tic{\theta}\in \twirr(N)$ and $N \leq L\leq L_{\tic{\theta}}$ we will call $(L,N,\tic{\theta})$ a \emph{$G$-twist character triple}. Given such a triple, define the Dirichlet polynomial
\[
\widetilde{f}_{(L,N,\tic{\theta})}(s)=
\sum_{\tic{\lambda}\in\twirr(L \mid\tic{\theta})}
\left(\frac{\lambda(1)}{\theta(1)}\right)^{-s}.
\]
The goal of this subsection is to prove 
Proposition~\ref{prop:Analogue-Jaikin}, that is, that the invariants $\cC_{K_p}(\tic{\theta})$ and $\cT_{L_p,K_p,\Gamma_p}(\tic{\theta})$ associated with $\tic{\theta}$ determine $\widetilde{f}_{(L,N,\tic{\theta})}(s)$. This will be used in the proof of Proposition~\ref{prop:partial-Main-twist} in the following subsection.

We start with some straightforward generalisations to projective characters 
of some of the notation and formalism regarding induction and restriction.
Let $H$ be a profinite group and let $\alpha \in \cocy{2}(H)$. 
	If $M$ is a normal subgroup of $H$ of finite index and $\theta \in \PIrr_{\alpha_M}(M)$ we define
	\[
	\PIrr_{\alpha}(H \mid \theta) = 
	\bigl\{ \pi \in \PIrr_{\alpha}(H) \mid 
	\bigl< \Res^H_M \pi, \theta \bigr> \neq 0 \bigr\}.
	\]

From now on, let $\alpha\in \cocy{2}(L/N)$.
\begin{lem}
	\label{lem:isaacs_6_11_proj}
	Let $\theta \in \PIrr_{\alpha_N}(N)$. Assume that $L\cap K_{\theta}\leq K$ (i.e., $\Stab_L(\theta) \leq K$) and that $\alpha_K$ is $L$-invariant
	Then we have a bijection
	\[
	\Ind_{K,\alpha}^L : \PIrr_{\alpha_K }(K \mid \theta) 
	\longrightarrow \PIrr_{\alpha}(L \mid \theta).
	\]
\end{lem}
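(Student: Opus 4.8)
The plan is to deduce this from the ordinary Clifford correspondence (\cite[(6.11)]{Isaacs}) by passing to a central extension that trivialises $\alpha$. First I would reduce to the case where $L$ is finite: $\theta$ factors through a finite quotient $N/M$ with $M$ an open normal subgroup of $L$ contained in $N$, and then every projective character occurring in the statement factors through the corresponding finite quotient of its group. Since $\alpha$ is constant on cosets of $N$, I may also replace it within its cohomology class by a normalised cocycle, so that $\alpha(x,y)=1$ whenever $x\in N$ or $y\in N$. Then, letting $A\leq\C^{\times}$ be the finite subgroup generated by the values of $\alpha$, I would form the central extension $\hat{L}$ with underlying set $L\times A$ and multiplication $(x,a)(y,b)=(xy,ab\,\alpha(x,y))$, and let $\hat{K},\hat{N}\leq\hat{L}$ be the preimages of $K,N$ under $\hat{L}\to L$; the normalisation guarantees $\hat{N}=N\times A$ with $A$ central in $\hat{L}$. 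The next step is to set up the standard dictionary (see, e.g., \cite{Curtis_Reiner}): for $N\leq M\leq L$, assigning to $\pi\in\PIrr_{\alpha_M}(M)$ the genuine representation $\hat{\pi}$ of $\hat{M}$ (the preimage of $M$) with $\hat{\pi}(x,a)=a\,\pi(x)$ gives a bijection between $\PIrr_{\alpha_M}(M)$ and the set of $\rho\in\Irr(\hat{M})$ whose restriction to $A$ is a multiple of the inclusion $\iota\colon A\hookrightarrow\C^{\times}$; this bijection is compatible with restriction, carries $\Ind_{K,\alpha}^{L}$ to ordinary induction $\Ind_{\hat{K}}^{\hat{L}}$, and respects lying over. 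Under it $\theta$ corresponds to $\hat{\theta}:=\theta\boxtimes\iota\in\Irr(\hat{N})$, i.e.\ $\hat{\theta}(n,a)=a\,\theta(n)$, and the map in the statement becomes $\Ind_{\hat{K}}^{\hat{L}}\colon\Irr(\hat{K}\mid\hat{\theta})\to\Irr(\hat{L}\mid\hat{\theta})$---any irreducible character lying over $\hat{\theta}$ automatically restricts to $A$ through $\iota$, so that constraint is vacuous on both sides.

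The crux is to compute $\Stab_{\hat{L}}(\hat{\theta})$. For $(x,a)\in\hat{L}$ and $(n,b)\in\hat{N}$, a direct computation with the cocycle---using the normalisation to cancel the factors $\alpha(x,n)=1$ and $\alpha(xn,x^{-1})=\alpha(x,x^{-1})$---gives $(x,a)(n,b)(x,a)^{-1}=(\leftexp{x}{n},b)$, so conjugation by $(x,a)$ acts on $\hat{N}=N\times A$ as $n\mapsto\leftexp{x}{n}$ on the first factor and trivially on the second. Hence $\leftexp{(x,a)}{\hat{\theta}}=\leftexp{x}{\theta}\boxtimes\iota$, so $(x,a)$ fixes $\hat{\theta}$ precisely when $x\in\Stab_L(\theta)$; thus $\Stab_{\hat{L}}(\hat{\theta})$ is exactly the preimage in $\hat{L}$ of $\Stab_L(\theta)=L\cap K_\theta$, which by hypothesis lies inside $\hat{K}$.

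It then remains to apply the Clifford correspondence in intermediate form: when $\hat{N}\trianglelefteq\hat{L}$, $\hat{\theta}\in\Irr(\hat{N})$ and $\Stab_{\hat{L}}(\hat{\theta})\leq\hat{K}\leq\hat{L}$, induction is a bijection $\Irr(\hat{K}\mid\hat{\theta})\to\Irr(\hat{L}\mid\hat{\theta})$; this I would obtain from \cite[(6.11)]{Isaacs} applied once with ambient group $\hat{K}$ (legitimate because $\Stab_{\hat{K}}(\hat{\theta})=\hat{K}\cap\Stab_{\hat{L}}(\hat{\theta})=\Stab_{\hat{L}}(\hat{\theta})$) and once with ambient group $\hat{L}$, together with transitivity of induction through $\Stab_{\hat{L}}(\hat{\theta})\leq\hat{K}\leq\hat{L}$. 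Translating back through the dictionary then gives the asserted bijection $\Ind_{K,\alpha}^{L}\colon\PIrr_{\alpha_K}(K\mid\theta)\to\PIrr_{\alpha}(L\mid\theta)$. I expect the proof to be essentially bookkeeping, the points requiring care being the normalisation of $\alpha$ that makes $\hat{N}$ split off $A$, and the compatibility of the dictionary with induction. (The hypothesis that $\alpha_K$ is $L$-invariant is not needed for the bijection itself---it is the condition under which conjugation by $L$ on $\PIrr_{\alpha_K}(K)$ preserves the factor set $\alpha_K$, which is implicit whenever that action is used.) An alternative route, working directly inside the twisted group algebra via the projective analogues of Mackey's formula and Clifford's theorem, is also available but requires more setup.
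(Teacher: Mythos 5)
Your proof is correct and takes a genuinely different route from the paper. The paper's own proof is a one-liner: it asserts that the proof of Isaacs~(6.11) transfers mutatis mutandis to the projective setting, appealing to the projective analogues of Frobenius reciprocity and Clifford's theorem (citing Karpilovsky). You instead reduce to the \emph{ordinary} Clifford correspondence by passing to a finite central extension $\hat L \to L$ built from $\alpha$, translating everything through the standard dictionary between $\alpha$-projective characters of subgroups of $L$ and genuine characters of their preimages in $\hat L$ that restrict to $\iota$ on the central kernel $A$, and then applying Isaacs~(6.11) twice (once inside $\hat K$, once inside $\hat L$) together with transitivity of induction. Your approach is more self-contained in that it never invokes a projective Clifford theory, at the cost of setting up the covering-group machinery and verifying the dictionary is compatible with conjugation, restriction, induction, and ``lying over.'' Your stabiliser computation and the observation that $\hat N$ splits as $N\times A$ after normalising $\alpha$ are both correct, and so is the aside that the $L$-invariance of $\alpha_K$ is not used in the bijection itself (it is needed elsewhere, e.g.\ in Lemma~\ref{lem:same_ind}, where conjugation by $L$ on $\PIrr_{\alpha_K}(K)$ must preserve the factor set).

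One small point to tighten: ``the finite subgroup generated by the values of $\alpha$'' is not automatically finite---a $2$-cocycle on a finite group with coefficients in $\C^{\times}$ can take values of infinite order (coboundaries already do). Before forming $\hat L$ you should replace $\alpha$ within its cohomology class by a cocycle taking values in roots of unity of order dividing $|L/N|$; Lemma~\ref{lem:Z-BU_H-finite} (or \cite[\lemBasicCoho]{Rationality1}) supplies exactly such a representative, and the replacement does not affect $\PIrr_{\alpha}(\cdot)$ up to a canonical bijection. With that adjustment the argument goes through as written.
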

\begin{proof}
	The proof of \cite[Theorem 6.11]{Isaacs} transfers, mutatis mutandis, to the present 
	situation as Frobenius reciprocity and Clifford's theorem hold in the more general 
	context of projective representations (see, e.g.,
	\cite[Theorem~10.1]{Karpilovsky2} for the latter).
\end{proof}
We generalise the $G$-twist equivalence relation $\Gtwist$ on $\Irr(L)$ to $\PIrr_{\alpha}(L)$ in the obvious way, that is, for $\pi_1, \pi_2 \in \PIrr_{\alpha}(L)$,
\[
\pi_1 \Gtwist \pi_2    \iff    \pi_1 =  \pi_2 \psi\lvert_L, \quad \text{for some}\ \psi\in \Lin(G).
\]
For a projective character $\theta$ denote its $G$-twist class
by $\tic{\theta}$ and we denote the set of $G$-twist classes in $\PIrr_{\alpha}(L)$ by 
$\ptwirr_{\alpha}(L)$.
Moreover, if $\theta \in \PIrr_{\alpha_N}(N)$, we define 
\[
\ptwirr_{\alpha}(L \mid \tic{\theta})
\] 
as the set of those $G$-twist classes 
$\tic{\pi}\in\ptwirr_{\alpha}(L)$
such that $\pi\in\PIrr_{\alpha}(L\mid\theta \psi\lvert_{N})$, for some $\psi\in\Lin(G)$.
%
The $G$-twist equivalence relation is compatible 
with $\Ind_{K,\alpha}^L$ in the sense that for any $\lambda \in \PIrr_{\alpha_K}(K)$ and $\psi\in \Lin(G)$, we have $\Ind_{K,\alpha}^L(\lambda)\psi = \Ind_{K,\alpha}^L(\lambda \psi|_H)$. This follows immediately from the character formula for induced projective characters; see \cite[Chapter~1, Proposition~9.1\,(i)]{Karpilovsky3}.
Thus, if $\alpha_K$ is $L$-invariant, there is a function
\begin{equation}
    \label{eq:twist_iso_ind}
\twind_{K,\alpha}^L : \ptwirr_{\alpha_K}(K \mid \tic{\theta}) 
\longrightarrow \ptwirr_{\alpha}(L \mid \tic{\theta})
\end{equation}
sending $\tic{\pi} \in \ptwirr_{\alpha_K}(K \mid \tic{\theta})$ to the $G$-twist 
class of $\Ind_{K,\alpha}^L \pi$. 

The following lemma is a straightforward 
application of Mackey's intertwining number formula for projective characters \cite[Ch.~1, Theorem~8.6]{Karpilovsky3}. 
\begin{lem}     
	\label{lem:same_ind}
	Let $\pi_1, \pi_2 \in \PIrr_{\alpha_K}(K)$ and assume that $\alpha_K$ is $L$-invariant.
	Then
       \[
	\twind_{K,\alpha}^L \tic{\pi}_1 = \twind_{K,\alpha}^L \tic{\pi}_2 
	\iff \exists\,\, g \in L\  (\pi_1 \Gtwist \leftexp{g}{\pi_2}).
	\]
\end{lem}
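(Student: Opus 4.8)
The plan is to deduce this from Mackey's intertwining number formula for projective characters \cite[Ch.~1, Theorem~8.6]{Karpilovsky3}, the only non-formal ingredient being that the induced characters in sight are irreducible, which is guaranteed by the projective Clifford correspondence of Lemma~\ref{lem:isaacs_6_11_proj}. First I would record the setup: since $\twind_{K,\alpha}^L$ is the map of \eqref{eq:twist_iso_ind}, the characters $\pi_1,\pi_2$ lie over some $G$-twist of $\theta$; as every such twist has the same stabiliser in $L$, namely $\Stab_L(\theta)\leq K$, Lemma~\ref{lem:isaacs_6_11_proj} applies and $\Ind_{K,\alpha}^L\pi_1$, $\Ind_{K,\alpha}^L\pi_2$ (and their $G$-twists) are irreducible. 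Because $K\trianglelefteq L$ and $\alpha_K$ is $L$-invariant, conjugation by any $g\in L$ maps $\PIrr_{\alpha_K}(K)$ to itself, and Mackey's formula takes the form $\langle\Ind_{K,\alpha}^L\sigma_1,\Ind_{K,\alpha}^L\sigma_2\rangle=\sum_{gK\in L/K}\langle\leftexp{g}{\sigma_1},\sigma_2\rangle$ for $\sigma_1,\sigma_2\in\PIrr_{\alpha_K}(K)$; here each summand is an inner product of two $\alpha_K$-projective characters of $K$, hence lies in $\{0,1\}$ and equals $1$ exactly when the two characters coincide.

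With this in hand, for $\Leftarrow$ I would take $\pi_1=\leftexp{g}{\pi_2}\,\psi|_K$ with $g\in L$, $\psi\in\Lin(G)$, apply the displayed formula to $\sigma_1=\leftexp{g}{\pi_2}$, $\sigma_2=\pi_2$, and note that the coset $g^{-1}K$ already contributes $\langle\pi_2,\pi_2\rangle=1$; since both induced characters are irreducible with factor set $\alpha$, this forces $\Ind_{K,\alpha}^L(\leftexp{g}{\pi_2})=\Ind_{K,\alpha}^L\pi_2$, and then the compatibility of $\Ind_{K,\alpha}^L$ with $G$-twisting recalled before \eqref{eq:twist_iso_ind} gives $\Ind_{K,\alpha}^L\pi_1=\Ind_{K,\alpha}^L\pi_2\cdot\psi|_L$, i.e.\ $\twind_{K,\alpha}^L\tic{\pi}_1=\twind_{K,\alpha}^L\tic{\pi}_2$. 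For $\Rightarrow$ I would use the twisting compatibility to rewrite the hypothesis as $\Ind_{K,\alpha}^L\pi_1=\Ind_{K,\alpha}^L(\pi_2\psi|_K)$ for some $\psi\in\Lin(G)$, so that this intertwining number is nonzero; Mackey then yields a $g\in L$ with $\langle\leftexp{g}{\pi_1},\pi_2\psi|_K\rangle\neq0$, whence $\leftexp{g}{\pi_1}=\pi_2\psi|_K$ as $\alpha_K$-projective characters of $K$. Conjugating by $g^{-1}$ and using that $\psi\in\Lin(G)$ is invariant under $G$-conjugation (so $\leftexp{g^{-1}}{(\psi|_K)}=\psi|_K$) gives $\pi_1=\leftexp{g^{-1}}{\pi_2}\cdot\psi|_K$, that is, $\pi_1\Gtwist\leftexp{g^{-1}}{\pi_2}$ with $g^{-1}\in L$.

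The hard part is not any calculation but getting the bookkeeping exactly right: one must check that each Mackey summand really is an inner product of two projective characters with a common factor set — this is precisely what the $L$-invariance of $\alpha_K$ provides (together with $K\trianglelefteq L$, which makes the double cosets collapse) — and that the induced characters are genuinely irreducible, so that ``nonzero intertwining number'' can be upgraded to ``equal''. The latter is exactly the content of Lemma~\ref{lem:isaacs_6_11_proj}, and is the reason the present lemma is stated in this Clifford-theoretic setting rather than for arbitrary projective characters of $K$.
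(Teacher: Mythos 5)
Your proposal is correct and takes exactly the approach the paper indicates: the paper gives no written-out proof but states that the lemma is "a straightforward application of Mackey's intertwining number formula for projective characters," and your argument is a careful fleshing-out of precisely that, using the $L$-invariance of $\alpha_K$ and normality of $K$ in $L$ to make each Mackey summand a well-formed inner product of $\alpha_K$-projective characters, the compatibility of $\Ind_{K,\alpha}^L$ with $G$-twisting recalled before \eqref{eq:twist_iso_ind}, and Lemma~\ref{lem:isaacs_6_11_proj} to upgrade ``nonzero intertwining number'' to equality of irreducible projective characters.
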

\begin{prop}
	\label{prop:Analogue-Jaikin}
	Let $\tic{\theta}, \tic{\theta'} \in \twirr_{L,K,\Gamma}(N)$ and assume that 
	$\cC_{K_{p}}(\tic{\theta}) = \cC_{K_{p}}(\tic{\theta'})$ and $\cT_{L_{p}, K_{p}, \Gamma_p} (\tic{\theta})= \cT_{L_{p}, K_{p}, \Gamma_p}(\tic{\theta'})$. Then
	\[
	 \widetilde{f}_{(L,N,\tic{\theta})}(s)=\widetilde{f}_{(L,N,\tic{\theta'})}(s).
	\] 
\end{prop}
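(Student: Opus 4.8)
The plan is to first reduce the hypotheses to the assertion that $\cC_K$ and $\cT_{L,K,\Gamma}$ agree on $\tic\theta$ and $\tic{\theta'}$, and then to prove that those two invariants already determine $\widetilde f_{(L,N,\tic\theta)}(s)$. For the reduction: $\cC_{K_p}(\tic\theta)=\cC_{K_p}(\tic{\theta'})$ gives $\cC_K(\tic\theta)=\cC_K(\tic{\theta'})$ by \cite[\lemJaikinsProp]{Rationality1}, and then Proposition~\ref{prop:red_coeff_to_Sylow}, applied together with the hypothesis $\cT_{L_p,K_p,\Gamma_p}(\tic\theta)=\cT_{L_p,K_p,\Gamma_p}(\tic{\theta'})$, yields $\cT_{L,K,\Gamma}(\tic\theta)=\cT_{L,K,\Gamma}(\tic{\theta'})$. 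So it remains to show: if $\cC_K(\tic\theta)=\cC_K(\tic{\theta'})$ and $\cT_{L,K,\Gamma}(\tic\theta)=\cT_{L,K,\Gamma}(\tic{\theta'})$, then $\widetilde f_{(L,N,\tic\theta)}(s)=\widetilde f_{(L,N,\tic{\theta'})}(s)$.

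For this I would rewrite $\widetilde f_{(L,N,\tic\theta)}(s)$ as an orbit-counting generating function. Using $\cC_K(\tic\theta)=\cC_K(\tic{\theta'})$ and \cite[\lemSameFs]{Rationality1}, fix strong extensions $\hat\theta,\hat{\theta'}$ of $\theta,\theta'$ to $K$ with a common factor set $\alpha\in\cocy{2}(K/N)$, and let $\mu,\mu'\colon L/N\to F_K$ be the functions attached to them by \eqref{eq:g-hat-theta-mu}, so that $[\bar\mu]=\cT_{L,K,\Gamma}(\tic\theta)$ and $[\bar{\mu'}]=\cT_{L,K,\Gamma}(\tic{\theta'})$. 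Since $\Stab_L(\theta)=K_{\tic\theta}\cap L=K$, the Clifford correspondence $\Ind_K^L\colon\Irr(K\mid\theta)\to\Irr(L\mid\theta)$ (\cite[(6.11)]{Isaacs}) is a bijection, it is compatible with twisting by $\Lin(G)$, and its fibres on $G$-twist classes are the $L$-conjugacy orbits (linear characters of $G$ being conjugation-invariant); composing it with the standard bijection $\Irr(K\mid\theta)\leftrightarrow\PIrr_{\alpha^{-1}}(K/N)$, $\hat\theta\cdot\tilde\pi\leftrightarrow\pi$, under which $G$-twisting corresponds to multiplication by $\Gamma$ (by Definition~\ref{def:Gamma} and the fact that two strong extensions of the same character differ by a function $K/N\to\C^\times$), and using \eqref{eq:g-hat-theta-mu} to transport $L$-conjugation, I would obtain a bijection
\[
\twirr(L\mid\tic\theta)\;\xrightarrow{\ \sim\ }\;\bigl(\PIrr_{\alpha^{-1}}(K/N)/\Gamma\bigr)\big/(L/N),
\]
where $L/N$ acts by $gN\cdot(\pi\Gamma)=\bigl(\mu(gN)\cdot\leftexp g\pi\bigr)\Gamma$ — well defined and an action by the results of Section~\ref{subsec:The-function-bar-mu} and Proposition~\ref{prop:function-mu-cohomology} — and under which a class $\tic\lambda$ corresponding to the orbit of $\pi$ has $\lambda(1)/\theta(1)=[L:K]\,\pi(1)$. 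Hence $\widetilde f_{(L,N,\tic\theta)}(s)=\sum_O\bigl([L:K]\,\pi_O(1)\bigr)^{-s}$, the sum over $L/N$-orbits $O$; the same holds for $\tic{\theta'}$ with $\mu'$ in place of $\mu$, but with the same $\alpha$, $\Gamma$ and $[L:K]$. (Alternatively, one may package this step on $L/N$ directly through a factor set built from $\hat\theta$ and $\mu$ and invoke Lemmas~\ref{lem:isaacs_6_11_proj} and \ref{lem:same_ind}.)

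It then remains to show that the $L/N$-orbit degree multiset on $\PIrr_{\alpha^{-1}}(K/N)/\Gamma$ is the same for the action by $\mu$ and for the action by $\mu'$. Since $\mu(gN)$ and $\mu'(gN)$ share the factor set $\leftexp g\alpha\,\alpha^{-1}$, the quotient $\mu'(gN)\mu(gN)^{-1}$ lies in $\Lin(K/N)$; and since $[\bar\mu]=[\bar{\mu'}]$ in $\coho{1}(L/N,F_K/\Gamma)$ there is a fixed $\bar\omega\in F_K/\Gamma$ with $\bar{\mu'}(gN)=\bar\mu(gN)\cdot\leftexp g{\bar\omega}\cdot\bar\omega^{-1}$. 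Lifting $\bar\omega$ to $\omega\in F_K$ and absorbing the resulting $\Gamma$-errors into the $\psi'_g$ (possible because $\Gamma=\Gamma_{K,\tic{\theta'}}$: if the error at $g$ is $\nu_g\in\Gamma$, choose $\varepsilon_g\in\Lin(G)$ with $\hat{\theta'}\varepsilon_g|_K=\hat{\theta'}\nu_g$ and replace $\psi'_g$ by $\varepsilon_g^{-1}\psi'_g$), one may assume $\mu'(gN)=\mu(gN)\,\leftexp g\omega\,\omega^{-1}$ for all $g\in L$. If $\omega$ could be chosen in $\Lin(K/N)$, then $\pi\Gamma\mapsto\omega^{-1}\pi\Gamma$ would be a degree-preserving $L/N$-equivariant bijection carrying the $\mu$-action to the $\mu'$-action, and we would be done. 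In general $\omega$ is only a cochain, and its coboundary $\beta$ is then forced to lie in $\cocy{2}(K/N)$ and to be $L/N$-invariant; the hard part will be to absorb $\beta$ — replacing $\hat{\theta'}$ by the strong extension $\hat{\theta'}\omega^{-1}$ of $\theta'$, whose factor set is $\alpha\beta^{-1}$ and whose attached function is exactly $\mu$, and matching $\PIrr_{\alpha^{-1}\beta}(K/N)/\Gamma$ with $\PIrr_{\alpha^{-1}}(K/N)/\Gamma$ by multiplication by $\omega$, compatibly with degrees and with the $L/N$-orbit structure. This factor-set bookkeeping, necessary precisely because $\omega$ need not be a homomorphism, is exactly what the projective-twist formalism of Lemmas~\ref{lem:isaacs_6_11_proj} and \ref{lem:same_ind} is designed for; once it is done, the two degree multisets agree and $\widetilde f_{(L,N,\tic\theta)}(s)=\widetilde f_{(L,N,\tic{\theta'})}(s)$.
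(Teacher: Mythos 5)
Your opening reductions are exactly the paper's: \cite[\lemJaikinsProp]{Rationality1} and \cite[\lemSameFs]{Rationality1} give common factor sets for $\hat\theta,\hat{\theta'}$, and Proposition~\ref{prop:red_coeff_to_Sylow} gives the cocycles $\mu,\mu'$ with $\mu'(gN)\Gamma=\mu(gN)\leftexp{g}{\omega}\omega^{-1}\Gamma$, after which you absorb the $\Gamma$-errors into the $\psi'_g$ exactly as the paper does. The orbit-counting reformulation $\twirr(L\mid\tic\theta)\cong(\PIrr_{\alpha^{-1}}(K/N)/\Gamma)/(L/N)$ is a legitimate repackaging of the Clifford/Mackey structure (Lemmas~\ref{lem:isaacs_6_11_proj}, \ref{lem:same_ind}), so the set-up is sound and essentially identical to the paper's.

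The gap is in the final step, and it is a genuine one. You propose to pass from $\hat{\theta'}$ to $\hat{\theta'}\omega^{-1}$ (fine: it is a strong extension of $\theta'$ with factor set $\alpha\beta^{-1}$ and attached cocycle $\mu$) and then to ``match $\PIrr_{\alpha^{-1}\beta}(K/N)/\Gamma$ with $\PIrr_{\alpha^{-1}}(K/N)/\Gamma$ by multiplication by $\omega$, compatibly with degrees and with the $L/N$-orbit structure.'' But the map $\pi'\Gamma\mapsto\omega^{-1}\pi'\Gamma$ does \emph{not} intertwine the two $\mu$-twisted $L/N$-actions: one computes $\omega^{-1}\cdot(\mu(gN)\leftexp{g}{\pi'})$ versus $\mu(gN)\leftexp{g}{(\omega^{-1}\pi')}=\mu(gN)\leftexp{g}{\omega^{-1}}\leftexp{g}{\pi'}$, and these differ by $\leftexp{g}{\omega}\omega^{-1}$, which is not in $\Gamma$ unless $\omega$ is $L$-invariant modulo $\Gamma$. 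So no map at the level of $K/N$ can carry one action to the other, and Lemmas~\ref{lem:isaacs_6_11_proj} and \ref{lem:same_ind} alone cannot supply the missing equivariance. The paper's resolution is structurally different at this point: it extends $\eta(=\omega|_{K/N})$ to a function $\omega$ on all of $L/N$, takes its factor set $\delta\in\cobo{2}(L/N)$, sends $\tic\lambda$ to $\twind_{K,\delta}^L(\tic{\eta\hat{\theta'}\cdot\pi})\in\ptwirr_{\delta}(L\mid\cdot)$, and only then multiplies by $\omega^{-1}$ — i.e.\ the untwisting happens at the level of projective characters \emph{of $L$}, where $\delta$ is a genuine coboundary and the factor-set obstruction disappears. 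That extension-to-$L/N$ and the auxiliary space $\ptwirr_{\delta}(L\mid\cdot)$ are the ideas your sketch is missing; without them the ``absorption of $\beta$'' cannot be carried out.
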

\begin{proof}
	\newcommand{\comp}{\mathrm{\sigma}}
	\newcommand{\precomp}{\mathrm{\sigma_0}}
	We prove this by constructing a bijection 
	\[
	\comp : \twirr(L \mid\tic{\theta}) \longrightarrow \twirr(L \mid\tic{\theta'})
	\]
	such that for all $\tic{\lambda} \in \twirr(L \mid\tic{\theta})$ we have
	\[
	\frac{\lambda(1)}{\theta(1)} = \frac{\lambda'(1)}{\theta'(1)}
	\]
	for $\lambda' \in \comp(\tic{\lambda})$.\par
	
	As	$\cC_{K_{p}}(\tic{\theta}) = \cC_{K_{p}}(\tic{\theta'})$, we have 	$\cC_{K}(\tic{\theta}) = \cC_{K}(\tic{\theta'})$  by \cite[\lemJaikinsProp]{Rationality1}, so \cite[\lemSameFs]{Rationality1}  implies that there are strong extensions $\hat{\theta}$ and $\hat{\theta}'$ of
	$\theta$ and $\theta'$, respectively, with the same factor set, say $\alpha$.
	Suppose that 
	$\cT_{L_{p}, K_{p}, \Gamma_p} (\tic{\theta})= \cT_{L_{p}, K_{p}, \Gamma_p}(\tic{\theta'})$.
	Then by Proposition~\ref{prop:red_coeff_to_Sylow}, there are cocycles $\mu, \mu' \in \cocy{1}(L/N, F_{K})$, 
	such that for any $g \in L$ there exist $\psi_g, \psi'_g \in \Lin(G)$ with 
	\begin{align*}
	\leftexp{g}{\hat{\theta}}       &= \hat{\theta} \psi_g\lvert_K \cdot \mu(gN),\\
	\leftexp{g}{\hat{\theta}'}      &= \hat{\theta}' \psi'_g\lvert_K \cdot \mu'(gN),\\
	\mu(gN) \Gamma	  &= \mu'(gN) \leftexp{g}{\eta}\eta^{-1}\Gamma,
	\end{align*}
	for some function $\eta: K/N \to \C^{\times}$. By changing $\psi_g$ and $\psi'_g$ if necessary,
	we may assume without loss of generality that $\mu(gN) = \mu'(gN) \leftexp{g}{\eta}\eta^{-1}$.
	This gives, in particular, that 
	\begin{equation}
	\label{eq:mod_mu}
	\leftexp{g}{(\eta\hat{\theta}')} 
	= \big( \eta\hat{\theta}'\big) \psi'_g\lvert_K  \cdot \mu(gN).
	\end{equation}
	
	Let $\omega: L/N \to \C^{\times}$ be a function extending $\eta$ and let 
	$\delta \in \cobo{2}(L/N)$ 	
	be its factor set. Clearly, the restriction
	$\delta_K$ equals the factor set of $\eta$.
	Let $\tic{\lambda} \in \twirr(L \mid \tic{\theta})$ and let 
	$\rho \in \Irr(K \mid \theta)$ be such that 
	\[
	\tic{\lambda} = \tic{\Ind}_K^L \tic{\rho}.
	\]
	By \cite[\lemCliffordExt]{Rationality1} we have $\rho = \hat{\theta} \cdot \pi$ for a unique $\pi\in \PIrr_{\alpha^{-1}}(K/N)$.     
	We define 
	\[
	\precomp(\tic{\lambda}) = 
	\twind_{K,\delta}^L \big(\tic{\eta\hat{\theta}' \cdot \pi}\big).
	\]
	Note that $\hat{\theta}' \cdot \pi \in \Irr(K)$, so $\eta\hat{\theta}' \cdot \pi \in \PIrr_{\delta_K}(K)$ 
	and since $\eta$ has constant value $\eta(1)$ on $N$, we have 
	$\precomp(\tic{\lambda})\in \ptwirr_{\delta}(L\mid \eta(1)\tic{\theta}')$.
	
	We will show that 
	\[
	\precomp: \twirr(L \mid \tic{\theta}) \longrightarrow \ptwirr_{\delta}(L\mid \eta(1)\tic{\theta}')
	\]
	is a well-defined bijection.\par 
	First, to prove that $\precomp$ is well-defined we need to prove that 
	$\precomp(\tic{\lambda})$ is independent of the choice of the $G$-twist class $\tic{\rho}$ inducing 
	to $\tic{\lambda}$. To this end, suppose that $\rho^* \in \Irr(K \mid \theta)$ is another character 
	such that $\tic{\lambda} = \tic{\Ind}_K^L \tic{\rho^*}$ and let  
	$\rho^* = \hat{\theta} \cdot \pi^*$ with $\pi^*\in \PIrr_{\alpha^{-1}}(K/N)$. 
	The relation $\tic{\Ind}_K^L \tic{\rho} = \tic{\Ind}_K^L \tic{\rho^*}$ implies 
	(by Mackey's induction-restriction theorem for ordinary characters) that there 
	is a $g \in L$ such that $\leftexp{g}{(\hat{\theta}\cdot \pi)} \Gtwist \hat{\theta}\cdot \pi^*$.
	Moreover, we have
	\begin{equation}
	      \label{eq:pre-mod_mu}
	\begin{split}
	\leftexp{g}{(\hat{\theta}\cdot\pi)} 
	& =\leftexp{g}{\hat{\theta}} \cdot \leftexp{g}{\pi} 
	=\big( \hat{\theta}\psi_{g}|_{K}\cdot\mu(gN)\big)\cdot\leftexp{g}{\pi}\\
	&=\hat{\theta}\cdot \big(\leftexp{g}{\pi}\psi_{g}|_{K}\cdot\mu(gN)\big),
	\end{split}
	\end{equation}
	and thus 
	$ \pi^* \Gtwist \leftexp{g}{\pi}\mu(gN)$.
	On the other hand, by equation \eqref{eq:mod_mu},
	\begin{equation}
	\label{eq:mod_mu-pi}
	\leftexp{g}{(\eta \hat{\theta}'\cdot\pi)} 
	= \eta \hat{\theta}'  \cdot \big(\leftexp{g}{\pi} \psi_{g}'|_{K}\cdot\mu(gN)\big),
	\end{equation}
	so
	    \[ 
		\leftexp{g}{(\eta\hat{\theta}'\cdot \pi)} \Gtwist \eta\hat{\theta}'\cdot \pi^*.
	    \]
	As $\hat{\theta}$ and $\hat{\theta'}$ have the same factor set, 
	$\mu(gN)$ and $\mu'(gN)$ have the same factor set, so
	$\mu^{-1}(gN)\mu'(gN)$, and thus $\leftexp{g}{\eta}\eta^{-1}$, is a homomorphism for all $g \in L$. Hence the factor set $\delta_K$ of $\eta$ is $L$-invariant. 
	We can thus apply Lemma~\ref{lem:same_ind} to obtain that 
	 \[
	\twind_{K,\delta}^L \big(\tic{\eta\hat{\theta}' \cdot \pi}\big)
	=  \twind_{K,\delta}^L \big(\tic{\eta\hat{\theta}' \cdot \pi^*}\big),
	\]
	that is, $\precomp$ is well-defined. 
	
	Similarly, we can prove that $\precomp$ is injective. Indeed, if $\precomp(\tic{\lambda})=\precomp(\tic{\lambda}^*)$, with 
	$\tic{\lambda} = \tic{\Ind}_K^L \tic{\rho}$, $\rho=\hat{\theta}\cdot \pi$ and $\tic{\lambda}^* = \tic{\Ind}_K^L \tic{\rho^*}$, $\rho^*=\hat{\theta}\cdot \pi^*$, 
	then Lemma~\ref{lem:same_ind} implies that there is a $g\in L$ such that
	$\leftexp{g}{(\eta\hat{\theta}'\cdot \pi)} \Gtwist \eta\hat{\theta}'\cdot \pi^*$, so by \eqref{eq:mod_mu-pi}, $\pi^* \Gtwist \leftexp{g}{\pi}\mu(gN)$, hence by 
	\eqref{eq:pre-mod_mu} we get $\leftexp{g}{(\hat{\theta}\cdot \pi)} \Gtwist \hat{\theta}\cdot \pi^*$, which by Lemma~\ref{lem:same_ind} implies that $\tic{\lambda}=\tic{\lambda}^*$.
	
	The surjectivity part of Lemma~\ref{lem:isaacs_6_11_proj} implies that 
	the function in equation~\eqref{eq:twist_iso_ind} 
	is surjective. Thus the function $\precomp$ is surjective and hence bijective.\par
	We now define
	\[
	\comp(\tic{\lambda}) = \omega^{-1} \cdot\precomp(\tic{\lambda}), 
	\qquad \text{ for } \tic{\lambda} \in \twirr(L \mid \tic{\theta}). 
	\]
	Multiplying by $\omega^{-1}$ is clearly a bijection $\ptwirr_{\delta}(L\mid \eta(1)\tic{\theta'})\rightarrow \twirr(L\mid\tic{\theta'})$ so $\comp$ is a bijection
	$\twirr(L \mid \nolinebreak \tic{\theta}) \to \twirr(L \mid\tic{\theta'})$.
	Moreover, for all $\tic{\lambda} \in \twirr(L \mid \tic{\theta})$ with $\tic{\lambda} = \tic{\Ind}_K^L \tic{\rho}$, $\rho=\hat{\theta}\cdot \pi$
	and $\lambda' \in \comp(\tic{\lambda})$, we have
	\begin{align*}
	\lambda(1)      & = \lvert L : K \rvert \theta(1)\pi(1),\\
	\lambda'(1)     &= \omega(1)^{-1} \lvert L : K \rvert \,\omega(1)\theta'(1)\pi(1) = \lvert L : K \rvert\,  \theta'(1)\pi(1).
	\end{align*}
	This concludes the proof.
\end{proof}
\subsection{Reduction of Theorem~\ref{thm:Main-twist} to the partial twist zeta series}
From now on, let $G$ be a twist-rigid compact $p$-adic analytic
group.  Note that $G$ is allowed to be FAb here and that in this case we may well have $Z_G(s) \neq \widetilde{Z}_G(s)$. Let $N$ be a normal
open pro-$p$ subgroup of $G$.
	      
As in Sections~\ref{sec:Twist-iso-Clifford} and \ref{sec:Reduction-to-pro-p_twisted_case} we write
\[
K_{\tic{\theta}}=\Stab_{G}(\theta),\qquad L_{\tic{\theta}}=\Stab_{G}(\tic{\theta}),
\]
for any $\theta\in\Irr(N)$.
For $K$, $L$, $\Gamma$, $K_p$ and $L_p$ as in Section~\ref{sec:Reduction-to-pro-p_twisted_case} and 
for any $c\in\coho{2}(K_{p}/N)$ (we follow the convention in \cite{Rationality1} to drop the trivial coefficient module $\C^{\times}$ from cohomology groups as well as from cocycle and coboundary groups) 
and $c'\in \coho{1}(L_{p}/N,M_{K_{p}}/\Gamma_p)$,
define
\begin{align*}
\twirrparams  = \{\tic{\theta}\in\twirr_{L,K,\Gamma}(N)\mid 
 \cC_{K_{p}}(\tic{\theta})=c,\ \cT_{L_{p}, K_{p}, \Gamma_p}(\tic{\theta})=c'\}.
\end{align*}
In analogy with the partial representation zeta series defined earlier,
we introduce the \emph{partial twist zeta series} 
\[
\tpartial{N;L, K, \Gamma}{c, c'} =\sum_{\tic{\theta}\in\twirr_{L, K, \Gamma}^{c,c'}(N)}\theta(1)^{-s}.
\]
Note that $\tpartial{N;L, K, \Gamma}{c, c'}=0$ unless there is a $\theta \in \Irr(N)$ such that 
$K=K_{\tic{\theta}}$, $L=L_{\tic{\theta}}$ and $\Gamma_{K,\tic{\theta}} = \Gamma$. Note also that
\[
\widetilde{Z}_{N}(s)=
\sum_{\substack{	N\leq K\leq L\leq G \\ 
		\Gamma \leq \Lin(K/N)}}
\sum_{\substack{	c\in\coho{2}(K_{p}/N)\\
		c'\in \coho{1}(L_{p}/N,F_{K_{p}}/\Gamma_p)}}
\tpartial{N;L, K, \Gamma}{c, c'}.
\]

Let $\cS$
denote the set of subgroups $K\leq G$ such that $N\leq K$ and $K=K_{\tic{\theta}}$
for some $\theta\in\Irr(N)$.
Similarly, let $\widetilde{\cS}$ denote the set of subgroups $L\leq G$ such that $N\leq L$ and $L=L_{\tic{\theta}}$, for some $\tic{\theta}\in\twirr(N)$.
For $K \in \cS$, let $\cG(K)$ be the set of 
subgroups of $\Gamma \leq \Lin(K/N)$ such that $\Gamma=\Gamma_{K,\tic{\theta}}$,
for some $\tic{\theta}\in\twirr(N)$ such that $K\leq K_{\tic{\theta}}$. 
\begin{prop}\label{prop:partial-Main-twist} 
	Suppose that $\tpartial{N;L, K, \Gamma}{c, c'}$ is rational in $p^{-s}$,
	for every $L\in\widetilde{\cS}$, $K\in\cS$, $\Gamma \in \cG(K)$, 
	$c\in\coho{2}(K_{p}/N)$ and $c'\in \coho{1}(L_{p}/N,F_{K_p}/\Gamma)$. Then Theorem~\ref{thm:Main-twist} holds.
\end{prop}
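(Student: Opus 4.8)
The plan is to express $\widetilde{Z}_G(s)$ in terms of data over $N$ and then feed in the displayed decomposition of $\widetilde{Z}_N(s)$ together with Proposition~\ref{prop:Analogue-Jaikin}. First I would record the Clifford-theoretic partition of $\twirr(G)$: by Clifford's theorem the set of $\theta\in\Irr(N)$ lying below a given $\rho\in\Irr(G)$ is a single $G$-conjugacy class, and twisting $\rho$ by $\psi\in\Lin(G)$ replaces such a $\theta$ by $\psi|_N\theta\in\tic{\theta}$; hence the $G$-orbit $\Orb$ of $\tic{\theta}$ in $\twirr(N)$ depends only on $\tic{\rho}$, giving a partition $\twirr(G)=\bigsqcup_{\Orb}\twirr(G\mid\Orb)$ (cf.\ Lemma~\ref{lem:Cliffords-thm-twists}). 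For a chosen representative $\tic{\theta}$ of $\Orb$ one has $\twirr(G\mid\Orb)=\twirr(G\mid\tic{\theta})$, and by Lemma~\ref{lem:Ind-is-bijective} the map $\twind_{L_{\tic{\theta}}}^G$ is a bijection $\twirr(L_{\tic{\theta}}\mid\tic{\theta})\to\twirr(G\mid\tic{\theta})$ sending a twist class of degree $d$ to one of degree $|G:L_{\tic{\theta}}|\,d$. Consequently
\[
\sum_{\tic{\rho}\in\twirr(G\mid\Orb)}\rho(1)^{-s}
=|G:L_{\tic{\theta}}|^{-s}\,\theta(1)^{-s}\,\widetilde{f}_{(L_{\tic{\theta}},N,\tic{\theta})}(s),
\]
an expression independent of the choice of representative, since conjugation preserves $L_{\tic{\theta}}$ up to conjugacy, $\theta(1)$, and the Dirichlet polynomial $\widetilde{f}_{(L_{\tic{\theta}},N,\tic{\theta})}(s)$.

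Next I would sum over orbits. As the last display is constant on the orbit $\Orb$, which has size $|G:L_{\tic{\theta}}|$, the sum over orbits may be rewritten as a sum over all of $\twirr(N)$ weighted by $|G:L_{\tic{\theta}}|^{-1}$; grouping according to the stabiliser data $(L,K,\Gamma)=(L_{\tic{\theta}},K_{\tic{\theta}},\Gamma_{K_{\tic{\theta}},\tic{\theta}})$, on each stratum $\twirr_{L,K,\Gamma}(N)$ of which $|G:L|$ is constant, yields
\[
\widetilde{Z}_G(s)=\sum_{L,K,\Gamma}\frac{|G:L|^{-s}}{|G:L|}
\sum_{\tic{\theta}\in\twirr_{L,K,\Gamma}(N)}\theta(1)^{-s}\,\widetilde{f}_{(L,N,\tic{\theta})}(s),
\]
where $L$ runs over $\widetilde{\cS}$, $K$ over $\cS$ with $N\leq K\leq L$, and $\Gamma$ over $\cG(K)$; the outer sum is finite since $G/N$ and $\Lin(K/N)$ are finite, and strata with $\twirr_{L,K,\Gamma}(N)=\emptyset$ contribute nothing. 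Now I would refine each inner sum by the invariants $c=\cC_{K_p}(\tic{\theta})$ and $c'=\cT_{L_p,K_p,\Gamma_p}(\tic{\theta})$: the sets $\coho{2}(K_p/N)$ and $\coho{1}(L_p/N,F_{K_p}/\Gamma_p)$ are finite by Lemma~\ref{lem:Z-BU_H-finite}, and by Proposition~\ref{prop:Analogue-Jaikin} the polynomial $\widetilde{f}_{(L,N,\tic{\theta})}(s)$ takes a single value, say $\widetilde{f}^{\,c,c'}_{L,K,\Gamma}(s)=\sum_j a_j m_j^{-s}$ with $a_j\in\Z_{\geq0}$ and $m_j=\lambda(1)/\theta(1)\in\N$, on $\twirr_{L,K,\Gamma}^{c,c'}(N)$. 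Hence
\[
\widetilde{Z}_G(s)=\sum_{L,K,\Gamma}\frac{|G:L|^{-s}}{|G:L|}\sum_{c,c'}\widetilde{f}^{\,c,c'}_{L,K,\Gamma}(s)\;\tpartial{N;L,K,\Gamma}{c,c'}.
\]

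Finally I would conclude. By hypothesis each $\tpartial{N;L,K,\Gamma}{c,c'}$ equals $h(p^{-s})$ for some $h\in\Q(t)$, so each summand above is a finite sum of terms $(|G:L|\,m_j)^{-s}\cdot\tfrac{a_j}{|G:L|}h(p^{-s})$, that is, of the form $n^{-s}f(p^{-s})$ with $n\in\N$ and $f\in\Q(t)$; a finite sum of such terms is again of this form, so $\widetilde{Z}_G(s)$ — which has integer coefficients — is virtually rational in $p^{-s}$, and hence so is $\widetilde{\zeta}_G(s)$. If in addition $G$ is pro-$p$, then $|G:L|$, $\theta(1)$ and $\lambda(1)$ are all powers of $p$ (and $L_p=L$, $K_p=K$), so every $n$ occurring is a power of $p$, whence $n^{-s}f(p^{-s})\in\Q(p^{-s})$ and the finite sum lies in $\Q(p^{-s})$; thus $\widetilde{Z}_G(s)$ is rational in $p^{-s}$. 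The substantive inputs are the Clifford bijection of Lemma~\ref{lem:Ind-is-bijective} and the invariance of $\widetilde{f}_{(L,N,\tic{\theta})}$ under fixing $(c,c')$ from Proposition~\ref{prop:Analogue-Jaikin}; the rest is bookkeeping, the one delicate point being that the orbit-size factor $|G:L|^{-1}$ has to be absorbed into the rational functions $f$, which is harmless because $\widetilde{Z}_G(s)$ itself has integer coefficients.
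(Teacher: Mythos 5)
Your proof is correct and follows essentially the same route as the paper: express $\widetilde{Z}_G(s)$ as a sum over $\twirr(N)$ weighted by $|G:L_{\tic{\theta}}|^{-1}$, use the bijection of Lemma~\ref{lem:Ind-is-bijective} together with the degree-scaling factor $|G:L_{\tic{\theta}}|$, group by the stabiliser data $(L,K,\Gamma)$ and then by the invariants $(c,c')$, and invoke Proposition~\ref{prop:Analogue-Jaikin} to extract a common Dirichlet polynomial $\widetilde f^{c,c'}_{L,K,\Gamma}$ in front of each partial series. The only cosmetic difference is that you phrase the initial step as a sum over $G$-orbits in $\twirr(N)$ of size $|G:L_{\tic{\theta}}|$, whereas the paper sums over all $\tic{\theta}\in\twirr(N)$ and uses the count from Lemma~\ref{lem:Cliffords-thm-twists} that each $\tic{\rho}\in\twirr(G)$ lies over exactly $|G:L_{\tic{\theta}}|$ twist classes of $N$; these are the same computation.
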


\begin{proof}
	By Lemma~\ref{lem:Cliffords-thm-twists}, for every $\tic{\rho}\in\twirr(G)$, there are exactly $|G:L_{\tic{\theta}}|$
	distinct $G$-twist classes $\tic{\theta}\in\twirr(N)$ such that
	$\tic{\rho}\in\twirr(G\mid\tic{\theta})$. Thus
	\[
	\widetilde{Z}_{G}(s)    =       \sum_{\tic{\rho}\in\twirr(G)}\rho(1)^{-s} = 
	\sum_{\tic{\theta} \in \twirr(N)}   |G:L_{\tic{\theta}}|^{-1}\sum_{\tic{\rho}\in\twirr(G\mid\tic{\theta})}\rho(1)^{-s}.
	\]
	By Lemma~\ref{lem:Ind-is-bijective},
	induction of $G$-twist classes from $\twirr(L_{\tic{\theta}}\mid\tic{\theta})$
	to $\twirr(G\mid\tic{\theta})$ is a bijective map. Therefore,
	\[
	\sum_{\tic{\rho}\in\twirr(G\mid\tic{\theta})} \rho(1)^{-s} = 
	\sum_{\tic{\lambda}\in\twirr(L_{\tic{\theta}}\mid\tic{\theta})} (\lambda(1)\cdot|G:L_{\tic{\theta}}|)^{-s},
	\]
	and so
	\begin{align*}
	\widetilde{Z}_{G}(s) & =	\sum_{\tic{\theta}\in\twirr(N)}|G:L_{\tic{\theta}}|^{-s-1}
	\sum_{\tic{\lambda} \in \twirr(L_{\tic{\theta}}\mid\tic{\theta})} 
	\theta(1)^{-s} \left(\frac{\lambda(1)}{\theta(1)}\right)^{-s}\\
	& = \sum_{\tic{\theta}\in\twirr(N)}
	|G:L_{\tic{\theta}}|^{-s-1} \theta(1)^{-s} \widetilde{f}_{(L_{\tic{\theta}},N,\tic{\theta})}(s)\\
	& = \sum_{\substack{L\in\widetilde{\cS}}} |G:L|^{-s-1} 
	\sum_{\substack{\tic{\theta}\in\twirr(N)\\L_{\tic{\theta}}=L}}
	\theta(1)^{-s} \widetilde{f}_{(L,N,\tic{\theta})}(s).
	\end{align*}
	If $\tic{\theta},\tic{\theta'}\in\twirrparams$, then
	$\cC_{K_{p}}(\tic{\theta})=\cC_{K_{p}}(\tic{\theta'})$
	and $\cT_{L_{p}, K_{p}, \Gamma_p}(\tic{\theta})=\cT_{L_{p}, K_{p}, \Gamma_p}(\tic{\theta'})$.
	Thus, by Proposition~\ref{prop:Analogue-Jaikin}, we have
	$\widetilde{f}_{(L,N,\tic{\theta})}(s)=\widetilde{f}_{(L,N,\tic{\theta'})}(s)$. 
	By the above expression for $\widetilde{Z}_{G}(s)$, 
	we can therefore write
	\[
	\widetilde{Z}_{G}(s)=\sum_{\substack{   L\in\widetilde{\cS}\\
			K\in\cS\\
			\Gamma \in \cG(K)}}
	|G:L|^{-s-1}\sum_{\substack{    c\in\coho{2}(K_{p}/N)\\
			c'\in \coho{1}(L_{p}/N,F_K/\Gamma)}}
	\widetilde{f}_{L,K,\Gamma}^{c,c'}(s)\tpartial{N;L, K, \Gamma}{c, c'}
	\]
	where 
	$\widetilde{f}_{L,K,\Gamma}^{c,c'}(s) := \widetilde{f}_{(L,N,\tic{\theta})}(s)$ for some
	(equivalently, any) $G$-twist character triple $(L,N,\tic{\theta})$ such that $\tic{\theta} \in \twirrparams$.
	
	From the assumption that $\tpartial{N;L, K, \Gamma}{c, c'}$ is rational
	in $p^{-s}$, it now follows that $\widetilde{Z}_{G}(s)$, and hence $\widetilde{\zeta}_{G}(s)$,
	is virtually rational. Moreover, if $G$ is pro-$p$, then $|G:L|$
	is a power of $p$ for any subgroup $L$, and likewise $\lambda(1)$
	is a power of $p$, so $\widetilde{f}_{(L,N,\tic{\theta})}(s)$ is a polynomial
	in $p^{-s}$. Thus, when $G$ is pro-$p$, $\widetilde{Z}_{G}(s)$, and
	hence $\widetilde{\zeta}_{G}(s)$, is rational in $p^{-s}$.
\end{proof}
\section{Rationality of the partial twist zeta series}
\label{sec:rationality_partial_tw}
The groups $G$, $N$, $K$, $L$, $\Gamma$, $K_p$ and $L_p$ are as in the previous two sections. 
We shall show that, for each $c\in\coho{2}(K_p/N)$ and $c'\in \coho{1}(L_{p}/N,F_{K_{p}}/\Gamma_p)$, the set of twist classes $\twirrparams$  
is in bijection with a set of equivalence classes under a definable equivalence relation. 
We deduce from this that each partial twist zeta series is rational in $p^{-s}$ and hence prove 
Theorem~\ref{thm:Main-twist}.  Fix $c\in\coho{2}(K_p/N)$ and $c'\in \coho{1}(L_{p}/N,F_{K_{p}}/\Gamma_p)$ throughout the 
section. In order to use Proposition~\ref{prop:red_Gamma} we assume in this section that 
$\twirrparams \neq \emptyset$.\par
\subsection{Reduction of the predicate $\cT_{\Lp,\Kp,\Gamma_p}(\tic{\theta}) = c'$ to degree one characters}
In this section we reduce the computation of $\cT_{\Lp,\Kp,\Gammap}(\tic{\theta})$ to a statement 
on a degree one character. Namely, if $c'\in \coho{1}{(\Lp/N, F_{\Kp}/\Gamma_p)}$, our goal is to 
express  $\cT_{\Lp,\Kp,\Gammap}(\tic{\theta}) = c'$ with a statement involving only 
elements of $N$, conjugation by elements of $G$, and a linear character of a finite-index subgroup of $N$. \par
We shall make use of the following notation from \cite[\secRedDegOne]{Rationality1}. We define
	\[
		\cH(K_p)=\{H\leq K_p\mid H\text{ open in }K_p,\,K_p=HN\},
	\]
and let $X_K$ be the set of pairs $(H,\chi)$ with $H\in\cH(K_p)$, where:
\begin{enumerate}
	\item $(H,N\cap H,\chi)$ is a character triple.
	\item $\chi$ is of degree one,
	\item $\Ind_{N\cap H}^{N}\chi\in \Irr_K(N)$. 
\end{enumerate}\par
We fix a pair $(H, \chi) \in X_K$ such that $\theta = \Ind_{N \cap H}^{N}(\chi)$. 
We define 
	\[
		\indexKpN = \lvert \Kp : N \rvert,\, 	\indexLpN = \lvert \Lp : N \rvert,\text{ and } 	\indexGN = \lvert G : N \rvert.
	\] 
Let $( y_1, \dots, y_\indexGN)$ be a left transversal for $N$ in $G$ with $y _1 = 1$, and such that
$y_1, \dots, y_\indexKpN \in \Kp$ and $y_{\indexKpN + 1},\dots, y_{\indexLpN} \in \Lp$. 
Note that $\Kp = H N$ implies that 
there exist elements $t_1,\dots,t_{\indexPN}\in N$ such that $(y_1t_1,\dots,y_{\indexKpN}t_{\indexKpN})$ 
is a left transversal for $N\cap H$ in $H$.\par
In view of proving definability in $\man$ we shall express conjugation by elements in $G$ in 
terms of the associated automorphism of $N$. In order to do so, 
for $i \in \{ 1, \dots \indexGN\}$, we define $\varphi_i$ 
to be the automorphism of $N$ sending $n$ to $y_i n y_i^{-1}$ and set
	\begin{align*}
		C_\Kp 		&= \{ \varphi_i \mid i = 1, \dots, \indexKpN \},		&
		C_{\Lp} 		&= \{ \varphi_i \mid i  = 1, \dots, \indexLpN \},		&
		C_G			&= \{ \varphi_i \mid i  = 1, \dots, \indexGN \}.
	\end{align*}
Finally we need to express how conjugating by a coset representative acts on the 
other coset representatives. To this end, we define
\begin{gather*}
\kappa: \{ 1,\dots, \indexGN\} \times \lbrace 1, \dots, \indexGN \rbrace \longrightarrow\lbrace 1,\dots,\indexGN \rbrace, \text{ and } d_{ij}\in N \text{ by}\\
y_i^{-1}y_j y_i = y_{\kappa(i,j)}d_{ij},
\end{gather*}
for all $i\in \{ 1,\dots, \indexLpN\}$ and $j \in \{ 1,\dots, \indexKpN  \}$. The choice of using right conjugation in the 
definition of $\kappa$ 
is more natural here, as it will only be needed to simplify expressions in the argument of $\chi$.\par
Let $\theta$ be a representative of $\tic{\theta}$, we need to choose a strong extension of 
$\theta$. By \cite[\propRedLinear]{Rationality1}, this may be 
done by inducing a strong extension $\hat{\chi}$ of $\chi$ from $H$ to $\Kp$. Since the definition of 
$\cT_{\Lp,\Kp,\Gammap}(\tic{\theta})$ is independent of the choice of strong extension (of $\theta$), we may 
assume without loss of generality that $\hat{\chi}$ is given by
	\[
		\hat{\chi}(y_{i}t_{i}n)=\chi(n),
	\]
 for all $n\in N\cap H$ and $i \in \{ 1, \dots, \indexKpN\}$.\par
The first step is to obtain an expression of the conjugate of $\hat{\chi}$ by an element of $\Lp$. This is done in the 
following lemma.
\begin{lem}
\label{lem:conjugating_chi^}
Let $z \in \Lp$ and let $n \in N$ be such that $z =  y_i n$ for some $i \in \{ 1, \dots, \indexLpN\}$. Let moreover $n' \in N$ and 
$j \in \{ 1, \dots, \indexLpN\}$ 
be such that $y_j n' \in \leftexp{z}{H}$. Then
	\[
		\leftexp{z}{\hat{\chi}}(y_jn') = \chi ( t_{\kappa(i,j)}^{-1} \varphi_{\kappa(i,j)}^{-1}(n^{-1})d_{ij} \varphi_i^{-1}( n' ) n).
	\]
\end{lem}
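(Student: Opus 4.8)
The plan is to unwind the definitions on both sides and reduce everything to a direct computation inside $\chi$, using the chosen normal form $\hat{\chi}(y_i t_i n)=\chi(n)$ for $n\in N\cap H$ and the structure constants $\kappa,d_{ij}$.

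\textbf{Setup.} First I would write $z=y_i n$ with $i\in\{1,\dots,\indexLpN\}$ and $n\in N$, so that by definition $\leftexp{z}{\hat{\chi}}(x)=\hat{\chi}(z^{-1}xz)=\hat{\chi}(n^{-1}y_i^{-1}\,x\,y_i n)$ for $x\in\leftexp{z}{\Kp}$. The target argument is $x=y_j n'$ with $j\in\{1,\dots,\indexLpN\}$ and $y_jn'\in\leftexp{z}{H}$ (so that $z^{-1}y_jn' z\in H$ and the formula $\hat{\chi}(y_{i}t_{i}n)=\chi(n)$ applies after we massage $z^{-1}y_jn'z$ into the shape $y_{k}t_{k}m$ with $m\in N\cap H$). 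Plugging in: $\leftexp{z}{\hat{\chi}}(y_jn') = \hat{\chi}(n^{-1}\,y_i^{-1}y_j n'\,y_i n) = \hat{\chi}(n^{-1}\,(y_i^{-1}y_j y_i)\,(y_i^{-1}n'y_i)\,n)$.

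\textbf{Main computation.} Now I would substitute the defining relation $y_i^{-1}y_jy_i = y_{\kappa(i,j)}d_{ij}$ and rewrite conjugation by $y_i^{-1}$ as the automorphism $\varphi_i^{-1}$, i.e. $y_i^{-1}n'y_i=\varphi_i^{-1}(n')$ and similarly $y_i^{-1}n^{-1}y_i=\varphi_i^{-1}(n^{-1})$ — but here I must be a little careful about which conjugation convention appears, since $\varphi_i(n)=y_iny_i^{-1}$, so conjugation by $y_i^{-1}$ on the left is $\varphi_i^{-1}$. After substituting, the argument of $\hat{\chi}$ becomes $n^{-1}\,y_{\kappa(i,j)}\,d_{ij}\,\varphi_i^{-1}(n')\,n$; I then pull the leading $n^{-1}$ past $y_{\kappa(i,j)}$ using $n^{-1}y_{\kappa(i,j)} = y_{\kappa(i,j)}\varphi_{\kappa(i,j)}^{-1}(n^{-1})$, so the argument is $y_{\kappa(i,j)}\,\varphi_{\kappa(i,j)}^{-1}(n^{-1})\,d_{ij}\,\varphi_i^{-1}(n')\,n$. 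Finally, to apply $\hat{\chi}(y_k t_k m)=\chi(m)$ with $k=\kappa(i,j)$, I insert $t_k^{-1}t_k$ after $y_k$, giving $y_{\kappa(i,j)}\,t_{\kappa(i,j)}\,\big(t_{\kappa(i,j)}^{-1}\varphi_{\kappa(i,j)}^{-1}(n^{-1})\,d_{ij}\,\varphi_i^{-1}(n')\,n\big)$, and reading off the trailing factor yields exactly $\chi\big(t_{\kappa(i,j)}^{-1}\varphi_{\kappa(i,j)}^{-1}(n^{-1})\,d_{ij}\,\varphi_i^{-1}(n')\,n\big)$, as claimed.

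\textbf{Loose ends and the main obstacle.} I would need to check that the trailing factor genuinely lies in $N\cap H$, which is where the hypothesis $y_jn'\in\leftexp{z}{H}$ (equivalently $z^{-1}y_jn'z\in H$) is used: it guarantees $z^{-1}y_jn'z = y_{\kappa(i,j)}t_{\kappa(i,j)}\cdot(\text{element of }N\cap H)$ so that the normal form for $\hat{\chi}$ is applicable and the value is independent of the ambiguity $N$ vs.\ $N\cap H$. The main obstacle, and the only place real care is needed, is bookkeeping the conjugation conventions — keeping straight that $\kappa$ is defined via \emph{right} conjugation $y_i^{-1}y_jy_i$ while the $\varphi_i$ are defined via \emph{left} conjugation $y_iny_i^{-1}$, and ensuring the inverses $\varphi_i^{-1}$, $\varphi_{\kappa(i,j)}^{-1}$ land in the right spots. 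Everything else is a routine rearrangement in the group $N$ together with one application of the defining formula for $\hat{\chi}$.
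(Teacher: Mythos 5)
Your proof is correct and follows essentially the same route as the paper's: compute $z^{-1}y_jn'z = n^{-1}y_i^{-1}y_j n' y_i n$, rewrite conjugations via $\varphi_i^{-1}$, substitute $y_i^{-1}y_jy_i = y_{\kappa(i,j)}d_{ij}$, slide $n^{-1}$ past $y_{\kappa(i,j)}$ using $\varphi_{\kappa(i,j)}^{-1}$, insert $t_{\kappa(i,j)}^{-1}t_{\kappa(i,j)}$, and apply the normal form for $\hat{\chi}$. (In fact the paper's final displayed line of this proof drops the trailing factor $n$ inside $\chi(\cdot)$, which appears to be a typo — your version correctly retains it, matching the lemma statement.)
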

	\begin{proof}
	We have $\leftexp{z^{-1}}(y_j n') =  n^{-1}y_i^{-1}y_j n'  y_i n$. Moreover
		\begin{align*}
			n^{-1}y_i^{-1}y_j n'  y_i n			& = n^{-1}y_i^{-1}y_j y_i \varphi_i^{-1}( n' ) n	\\
										& = n^{-1}y_{\kappa(i,j)} d_{ij} \varphi_i^{-1}( n' ) n\\
										& = y_{\kappa(i,j)} \varphi_{\kappa(i,j)}^{-1}(n^{-1})d_{ij} \varphi_i^{-1}( n' ) n\\
										& = y_{\kappa(i,j)}t_{\kappa(i,j)} t_{\kappa(i,j)}^{-1}\varphi_{\kappa(i,j)}^{-1}(n^{-1})d_{ij} \varphi_i^{-1}( n' ) n
		\end{align*}
	The element $y_{\kappa(i,j)} \varphi_{\kappa(i,j)}^{-1}(n^{-1})d_{ij} \varphi_i^{-1}( n' ) n$ is in $H$ by assumption so 
		\[
			t_{\kappa(i,j)}^{-1} \varphi_{\kappa(i,j)}^{-1}(n^{-1})d_{ij} \varphi_i^{-1}( n' ) n \in N\cap H.
		\] 
	Therefore $\hat{\chi}(n^{-1}y_i^{-1}y_j n'  y_i n) = \chi(t_{\kappa(i,j)}^{-1} \varphi_{\kappa(i,j)}^{-1}(n^{-1})d_{ij} \varphi_i^{-1}( n' ))$. 
	\end{proof}
Next we need to be able to express when an element of $\Kp$ belongs to a conjugate of $H$ in terms of conditions, which  
translate into a first order formula.
\begin{defn}
Let $i,j \in \{ 1, \dots, \indexKpN\}$ and $n, n' \in N$. We define $\mathbf{A}_{ij}(H, \chi, n, n')$ to be the predicate 
		\begin{multline*}
			\Big( \forall n'' \in \varphi_i( \leftexp{n}{(N\cap H)}) : {\varphi_{j}(\leftexp{n'}{n''})} \in \varphi_i (\leftexp{n}{(N\cap H)})\\ 
							\wedge\; \chi (\leftexp{n^{-1}}{\varphi_i}^{-1}(\varphi_{j}(\leftexp{n'}{n''}))) = \chi (\leftexp{n^{-1}}{\varphi_i}^{-1}(n''))\Big).
		\end{multline*}
\end{defn}
\begin{lem}
\label{lem:cond_H}
Let $i,j \in \{ 1, \dots, \indexKpN\}$ and $n, n' \in N$. Then $y_j n' \in \leftexp{y_i n}{H}$ if and only if 
$\mathbf{A}_{ij}(H, \chi, n, n')$ holds.
\end{lem}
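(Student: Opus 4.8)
The plan is to reduce the membership test $y_jn'\in\leftexp{y_i n}{H}$ to a condition on the finite-index subgroup $M\coloneqq N\cap H$ of $N$ together with the linear character $\chi$ on it, by first proving the following description of $H$ inside $\Kp$:
\[
H=\{g\in\Kp\mid \leftexp{g}{M}=M\ \text{and}\ \leftexp{g}{\chi}=\chi\}.
\]
The inclusion ``$\subseteq$'' is immediate, since $M\trianglelefteq H$ and $(H,M,\chi)$ is a character triple, so that $\chi$ is $H$-invariant. For ``$\supseteq$'', let $H'$ be the right-hand side, a subgroup of $\Kp$ with $H\leq H'$; as $\Kp=HN$ we get $H'N=\Kp$, hence $[\Kp:H']=[N:H'\cap N]$ while $[\Kp:H]=[N:M]$, so it is enough to check $H'\cap N=M$. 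If $m\in N$ normalises $M$ and fixes $\chi$, then $\leftexp{m}{M}\cap M=M$ and $\leftexp{m}{\chi}=\chi$ on $M$; since $\Ind_{M}^{N}\chi=\theta$ is irreducible, Mackey's irreducibility criterion applied to $g=m$ forces $m\in M$. Thus $H'\cap N=M$ and $H=H'$. This identity, which is where the defining condition $\Ind_{N\cap H}^{N}\chi\in\Irr_{K}(N)$ of $X_K$ enters, is the heart of the lemma; without it the right-hand side above could be strictly larger than $H$.

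Next I would unwind $\mathbf{A}_{ij}(H,\chi,n,n')$. Set $z=y_i n\in\Kp$ and $M_z\coloneqq\varphi_i(\leftexp{n}{M})=\leftexp{z}{M}=N\cap\leftexp{z}{H}$, and put $g=z^{-1}(y_jn')z\in\Kp$; conjugation by $z^{-1}$ is a bijection $M_z\to M$. A direct computation gives $\varphi_j(\leftexp{n'}{n''})=\leftexp{y_jn'}{n''}$ and $\leftexp{n^{-1}}{\varphi_i^{-1}(x)}=\leftexp{z^{-1}}{x}$, so that for $n''\in M_z$ with $m\coloneqq\leftexp{z^{-1}}{n''}\in M$ we have $\leftexp{n^{-1}}{\varphi_i^{-1}(\varphi_j(\leftexp{n'}{n''}))}=\leftexp{g}{m}$ and $\leftexp{n^{-1}}{\varphi_i^{-1}(n'')}=m$. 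Hence the first clause of $\mathbf{A}_{ij}$, quantified over all $n''\in M_z$, says that conjugation by $y_jn'$ maps $M_z$ into $M_z$; since $\Kp$ normalises $N$ and $M_z$ has finite index $[N:M]$ in $N$, this forces $\leftexp{y_jn'}{M_z}=M_z$, equivalently $\leftexp{g}{M}=M$. Granting this, the second clause reads $\chi(\leftexp{g}{m})=\chi(m)$ for every $m\in M$, that is $\leftexp{g}{\chi}=\chi$. So $\mathbf{A}_{ij}(H,\chi,n,n')$ holds if and only if $g=z^{-1}(y_jn')z$ normalises $M$ and fixes $\chi$.

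By the description of $H$ established in the first step, this last condition is equivalent to $g\in H$, and $z^{-1}(y_jn')z\in H$ is equivalent to $y_jn'\in zHz^{-1}=\leftexp{y_i n}{H}$, which is the assertion. The only non-formal ingredient is the subgroup identity together with its proof via Mackey's criterion; the rest is the routine translation of conjugations by the $y_i$ into the automorphisms $\varphi_i$ (note that since $i,j\leq\indexKpN$ everything stays inside $\Kp$, so the reindexing data $\kappa$ and $d_{ij}$ do not intervene here).
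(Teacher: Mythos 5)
Your proof is correct and follows essentially the same route as the paper: you establish the subgroup identity $H=\{g\in K_p\mid \leftexp{g}{(N\cap H)}=N\cap H,\ \leftexp{g}{\chi}=\chi\}$ via an index count plus Mackey's irreducibility criterion, and then unwind $\mathbf{A}_{ij}$ as the assertion that $g=z^{-1}(y_jn')z$ normalises $N\cap H$ and fixes $\chi$; the paper's proof is the same argument kept in conjugated form, proving $\leftexp{y_in}{H}=\Stab_M(\leftexp{y_in}{\chi})$ with $M$ the normaliser of $N\cap\leftexp{y_in}{H}$ in $K_p$, and invoking the second isomorphism theorem plus Mackey to compare indices.
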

	\begin{proof}
	Let $M$ be the normaliser of $N\cap \leftexp{y_i n}{H}$ in $\Kp$. We prove that 
        \[
            \leftexp{y_i n}{H} = \Stab_{M}(\leftexp{y_i n}{\chi}).
        \]
	This will be enough to conclude. Indeed $\mathbf{A}_{ij}$ is the conjunction of two predicates: the first 
	expresses exactly that $y_j n' \in M$ and the second means that $(y_j n')^{-1}$ fixes $\leftexp{y_i n}{\chi}$. Let therefore	
		\[
			A = \Stab_{M}(\leftexp{y_i n}{\chi}).
		\] 
	Clearly $N\cap \leftexp{y_i n}{H}$ is normal in $ \leftexp{y_i n}{H}$ and $\leftexp{y_i n}{H}$ fixes $ \leftexp{y_i n}{\chi}$, 
	so $ \leftexp{y_i n}{H} \subseteq A$. This inclusion gives that $\Kp =  \leftexp{y_i n}{H} N = A N $.
	Hence, by the second isomorphism theorem, we have that $\lvert \Kp : N \rvert =  \lvert A : N\cap A \rvert  
	= \lvert  \leftexp{y_i n}{H} : N\cap  \leftexp{y_i n}{H} \rvert$. Moreover, 
	by Mackey's formula, $N\cap  \leftexp{y_i n}{H} = N\cap A$ as $ \leftexp{y_i n}{\chi}$ induces irreducibly to $N$. 
	Thus 
	$A =  \leftexp{y_i n}{H}$ because $ \leftexp{y_i n}{H} \subseteq A$ and there is a subgroup that has the same index 
	in both.
	\end{proof}
We are now able to express $\cT_{\Lp,\Kp,\Gammap}(\widetilde{\theta}) = c'$ in terms 
of the pair $(H, \chi)$. Define
	\begin{align*}
		\onecocyp     &= \cocy{1}(L_p/N,\Func(K_p/N,W_{(p)}))\\	
		\onecobop     &=  \cobo{1}(L_p/N,\Func(K_p/N,W_{(p)}))	
	\end{align*}
By Lemma~\ref{lem:Z-BU_H-finite} every class $\coho{1}(L_{p}/N,F_{K_{p}})$ has a representative in 
$\onecocyp$. Moreover, let $\delta \in 
\onecocyp \cap \cobo{1}(L_p/N, F_{\Kp})$. Then there is an $\omega\in F_{K_p}$ such that for any $g\in L_p$, 
$\delta(gN) = \leftexp{g}{\omega}\omega^{-1}$.
Since $\leftexp{g}{\omega}\omega^{-1}$ has values in $W_{(p)}$ we have $\leftexp{g}{\omega}\omega^{-1} =
(\leftexp{g}{\omega}\omega^{-1})_{(p)}$, so by the properties of $f_{(\ell)}$ just before Lemma~\ref{lem:pi_ell_mu-cobound}, 
	\[
		\delta(gN) = \leftexp{g}{\omega_{(p)}}\omega^{-1}_{(p)},
	\]
hence $\delta \in \cobo{1}(L_p/N,\Func(K_p/N,W_{(p)}))$. Thus 
	\[
		\onecocyp \cap \cobo{1}(L_p/N, F_{\Kp})= \cobo{1}(L_p/N,\Func(K_p/N,W_{(p)}).
	\] 
It follows that the inclusion of $\onecocyp$
in $ \cocy{1}(L_p/N, F_{\Kp})$ induces an isomorphism 
	\[
		\coho{1}(L_{p}/N,F_{K_{p}})\cong \onecocyp/\onecobop.
	\]
We need the following definition for ease of notation.
\begin{defn}
Let $i,j \in \{1, \dots, \indexKpN\}$ and $k \in \{ 1, \dots, \indexLpN\}$. Let also  $\psi \in \Lin(G)$, $\nu \in \Gammap$, 
and $n, n' \in N$. We define $\mathbf{B}_{ijk}(H, \chi, \psi, \nu, n, n')$ to be the predicate
	\begin{multline*}
	 \chi (t_{\kappa(k,j)}^{-1} d_{kj}  \varphi_k^{-1} \big( n' \big) ) =\\ 
	 		\mu (y_k N)(y_{\kappa(i,j)} N)\, \cdot\, \delta(y_k N)(y_{\kappa(i,j)} N)\, \cdot \, \nu(y_{\kappa(i,j)} N)\,\cdot\, \psi(y_j) \psi(n')\\
				 \cdot \chi \big( t_{\kappa(i,j)}^{-1} \varphi_{\kappa(i,j)}^{-1}(n^{-1})d_{ij} \varphi_i^{-1}( n' ) n \big).
	\end{multline*}
\end{defn}
\begin{prop}
\label{prop:Linearisation_twist}
Let $(H,\chi) \in X_K$ be a pair corresponding to $\theta \in \Irr_{K}(N)$. Let $c' \in \coho{1}{(\Lp/N, F_{\Kp}/\Gammap)}$. Fix
$\mu\in \onecocyp$ such that, the $1$-cocycle in $\cocy{1}{(\Lp/N, F_{\Kp}/\Gammap)}$ 
defined by $g \mapsto \mu(g N)\Gammap$
is in the class $c'$. Then $\cT_{\Lp,\Kp,\Gammap}(\widetilde{\theta}) = c'$ if and only if there is a 
coboundary 	$\delta \in \onecobop$ such that, for all $k = 1, \dots, \indexLpN$, there are 
	\begin{enumerate}
	\renewcommand{\theenumi}{\em\alph{enumi})}
		\item $i \in \{ 1, \dots, \indexKpN \} $ and $n \in N$,
		\item 
			a character $\psi \in \Lin(G)$,
		\item a homomorphism $\nu \in \Gammap$,
	\end{enumerate}
such that, for all $ j \in \{ 1, \dots, \indexKpN\}$ and $n' \in N$ 
	\[
		\mathbf{A}_{ij}(H, \chi, n, n') \wedge \mathbf{A}_{kj}(H, \chi, 1, n') \Longrightarrow \mathbf{B}_{ijk}(H, \chi, \psi, \nu, n, n').
	\]
\end{prop}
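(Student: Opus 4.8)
The statement is an unravelling of the definition of $\cT_{\Lp,\Kp,\Gammap}(\tic\theta)$ in terms of the chosen transversals and the pair $(H,\chi)$, so the strategy is to translate each ingredient of the cohomological equality $\cT_{\Lp,\Kp,\Gammap}(\tic\theta)=c'$ into a statement about $\chi$ evaluated on elements of $N\cap H$. Recall that $\cT_{\Lp,\Kp,\Gammap}(\tic\theta)=[\bar\mu]$ where $\bar\mu$ is the class of the cocycle $g\mapsto \mu(gN)\Gammap$ attached to the strong extension $\hat\theta=\Ind_{H}^{\Kp}\hat\chi$. By the isomorphism $\coho{1}(L_p/N,F_{K_p})\cong\onecocyp/\onecobop$ established just before the statement, the equality $[\bar\mu]=c'$ with $c'$ represented by a fixed $\mu\in\onecocyp$ is equivalent to the existence of a coboundary $\delta\in\onecobop$ and, for each $g\in\Lp$, a homomorphism $\nu_g\in\Gammap$ such that $\mu_\theta(gN)=\mu(gN)\,\delta(gN)\,\nu_g$, where $\mu_\theta$ is the function defined by $\hat\theta$. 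So the first step is to record this reformulation precisely, noting that it suffices to check the identity on the coset representatives $y_k$, $k=1,\dots,\indexLpN$, since $\mu(y_kN)$ determines $\mu$ on all of $\Lp/N$.

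The second, and technically central, step is to compute $\mu_\theta(y_kN)$ explicitly as a function on $\Kp/N$, evaluated at each coset $y_jN$. By definition $\leftexp{y_k}{\hat\theta}=\hat\theta\,\psi_{y_k}|_{\Kp}\cdot\mu_\theta(y_kN)$, and by Lemma~\ref{lem:theta-hat-non-trivial-on-coset-Burnside} the value $\mu_\theta(y_kN)(y_jN)$ can be read off at any point $y_jn'$ of the coset where $\hat\theta(y_jn')\neq0$. Here I would use the explicit description $\hat\theta=\Ind_H^{\Kp}\hat\chi$: by the induced-character formula, $\hat\theta(y_jn')$ is a sum over those conjugates $\leftexp{z}{H}$ containing $y_jn'$ of $\leftexp{z}{\hat\chi}(y_jn')$, and Lemma~\ref{lem:conjugating_chi^} evaluates each such conjugate in terms of $\chi$, $\varphi_i$, $d_{ij}$ and the $t_i$'s. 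The predicate $\mathbf{A}_{ij}$ (Lemma~\ref{lem:cond_H}) is exactly what detects membership $y_jn'\in\leftexp{y_in}{H}$, so the conditions $\mathbf{A}_{ij}(H,\chi,n,n')\wedge\mathbf{A}_{kj}(H,\chi,1,n')$ in the hypothesis of the implication pick out, for the representative $z=y_kn$ (a general element of $\Lp$ written via the $y_k$'s) and for the term coming from $\leftexp{y_i n}{H}$, a simultaneous support condition. Taking the quotient $\leftexp{y_k}{\hat\theta}(y_jn')\big/\big(\hat\theta(y_jn')\psi_{y_k}(y_jn')\big)$ and substituting Lemma~\ref{lem:conjugating_chi^} produces precisely a ratio $\chi(\cdots)/\chi(\cdots)$ of the shape appearing in $\mathbf{B}_{ijk}$; the left-hand side $\chi(t_{\kappa(k,j)}^{-1}d_{kj}\varphi_k^{-1}(n'))$ is $\leftexp{y_k}{\hat\chi}(y_jn')$ at $n=1$ and the last factor $\chi(t_{\kappa(i,j)}^{-1}\varphi_{\kappa(i,j)}^{-1}(n^{-1})d_{ij}\varphi_i^{-1}(n')n)$ is $\leftexp{y_in}{\hat\chi}(y_jn')$, while the characters $\psi(y_j)\psi(n')=\psi_{y_k}(y_jn')$ and the $\mu,\delta,\nu$ factors encode the cohomological identity from Step~1.

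The third step is to check that only finitely many data need to be quantified and that the quantifiers can be arranged as in the statement: for each $k$ one chooses the index $i$ and element $n\in N$ realising $y_kn$ inside one of the finitely many conjugates $\leftexp{y_in}{H}$ that actually occur in the support of $\hat\theta$, one chooses the linear character $\psi=\psi_{y_k}$ witnessing $\leftexp{y_k}{\theta}=\theta\psi|_N$, and one chooses $\nu=\nu_{y_k}\in\Gammap$; then the identity of functions on $\Kp/N$ becomes the family of scalar identities $\mathbf{B}_{ijk}$ indexed by $j$ and tested on all $n'\in N$ subject to the support constraints $\mathbf{A}_{ij}\wedge\mathbf{A}_{kj}$. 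Conversely, given such $\delta,\psi,\nu,i,n$ one reverses the computation to recover $\mu_\theta(y_kN)=\mu(y_kN)\delta(y_kN)\nu_{y_k}$ as functions on $\Kp/N$ — using again Lemma~\ref{lem:theta-hat-non-trivial-on-coset-Burnside} to see that agreement on the supported points forces agreement everywhere — and hence $[\bar\mu]=c'$.

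\medskip

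\textbf{Main obstacle.} The delicate point is the bookkeeping in Step~2: $\hat\theta=\Ind_H^{\Kp}\hat\chi$ is a sum of several conjugate-induced terms, and one must be careful that dividing $\leftexp{y_k}{\hat\theta}$ by $\hat\theta$ at a point $y_jn'$ is legitimate (value nonzero) and that the ratio is genuinely independent of which supported point is chosen and which term of the induced-character sum one isolates. Getting the conjugation conventions consistent — in particular reconciling the left-conjugation used for $\leftexp{z}{\hat\chi}$ with the right-conjugation in the definition of $\kappa$ and $d_{ij}$, and tracking the interplay between $\varphi_i$ and $\varphi_{\kappa(i,j)}$ and the coset-correction elements $t_i$ — is where essentially all the work lies; once the identity $\leftexp{y_k}{\hat\chi}(y_jn')$ is correctly expanded as in Lemma~\ref{lem:conjugating_chi^}, the predicate $\mathbf{B}_{ijk}$ is simply that expansion with the cohomological factors inserted, and the equivalence is formal.
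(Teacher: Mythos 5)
Your Step~1 matches the paper: both reduce $\cT_{\Lp,\Kp,\Gammap}(\tic\theta)=c'$ to the existence of $\delta\in\onecobop$, $\psi\in\Lin(G)$ and $\nu\in\Gammap$ for each $k$ such that $\leftexp{y_k}{\hat\theta}=\hat\theta\cdot\mu(y_kN)\delta(y_kN)\nu\psi|_{\Kp}$, with $\hat\theta=\Ind_{H,\hat\alpha}^{\Kp}\hat\chi$. (A minor slip in your Step~1: you first quantify over $\delta\in\onecobop$, but the a priori reformulation only gives $\delta\in\cobo{1}(L_p/N,F_{K_p})$; the paper proves \emph{at the end} that such a $\delta$ automatically has values in $W_{(p)}$ and may therefore be taken in $\onecobop$. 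That order of argument matters for correctness.)

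Your Step~2 is where the approach genuinely diverges, and where it has a gap. You want to evaluate $\hat\theta(y_jn')$ via the induced-\emph{character} formula as a sum over conjugates $\leftexp{z}{H}$ containing $y_jn'$ and then take a pointwise quotient $\leftexp{y_k}{\hat\theta}(y_jn')/\bigl(\hat\theta(y_jn')\,\psi(y_jn')\bigr)$. But that quotient is a ratio of two \emph{sums}, and there is no reason a priori that it collapses to a single ratio $\leftexp{y_k}{\hat\chi}(y_jn')/\leftexp{y_in}{\hat\chi}(y_jn')$ of the form that $\mathbf{B}_{ijk}$ asserts; your ``main obstacle'' paragraph names exactly this issue but does not resolve it. The paper avoids it entirely: it never evaluates $\hat\theta$ pointwise. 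Instead it rewrites \emph{both sides} of the functional equation as induced projective characters, $\leftexp{y_k}{(\Ind_{H,\hat\alpha}^{\Kp}\hat\chi)}=\Ind_{\varphi_k(H),\hat\beta}^{\Kp}\leftexp{y_k}{\hat\chi}$ on the left and $\Ind_{H,\hat\beta}^{\Kp}\bigl(\hat\chi\cdot(\mu(y_kN)\delta(y_kN)\nu\psi)|_H\bigr)$ on the right, and then invokes Mackey's \emph{intertwining} formula for projective characters (cf.\ \cite[Ch.~1, Theorem~8.6]{Karpilovsky3}). Since both inductions are irreducible, equality forces exactly one double coset to contribute, which produces a \emph{single} index $i$ and element $n\in N$ for which the two projective characters restrict equally to $\varphi_k(H)\cap\varphi_i(\leftexp{n}{H})$ --- this, and not a quotient of induced-character sums, is what yields a single term and hence the implication $\mathbf{A}_{ij}\wedge\mathbf{A}_{kj}\Rightarrow\mathbf{B}_{ijk}$.

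To repair your argument you would either have to prove separately that the support sums in the induced-character formula reduce to single terms here (which would essentially be re-deriving the relevant case of the intertwining formula by hand), or switch to the paper's route and apply Mackey's intertwining number formula after recognising both sides as inductions of projective characters from $H$ and $\varphi_k(H)$. The $\mathbf{A}$ predicates (Lemma~5.2) and the evaluation of $\leftexp{y_in}{\hat\chi}$ (Lemma~5.1) then slot in exactly as you describe, and the final observation that $\chi,\psi,\mu(y_kN),\nu$ are all $W_{(p)}$-valued upgrades $\delta$ to $\onecobop$.
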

	\begin{proof}
	We define $\widetilde{f}_{H}:\cocy{2}(H/(N\cap H))\rightarrow \cocy{2}(\Kp/N)$
	to be the isomorphism induced by pulling back cocycles along the isomorphism
	$\Kp/N\rightarrow H/(N\cap H)$. Let $\alpha$ be the factor set of $\hat{\chi}$. Let 
	$\hat{\alpha}$ be the (unique) cocycle in $\coho{2}(\Kp)$ descending to  
	$\widetilde{f}_{H}(\alpha)$. 
	The projective character
		\begin{equation}
		\label{eq:theta^_as_ind}
			\hat{\theta} = \Ind_{H, \hat{\alpha}}^{\Kp} \hat{\chi}
		\end{equation}
	is a strong extension of $\theta$. We therefore have $\cT_{\Lp,\Kp,\Gammap}(\widetilde{\theta}) = c'$ if and only if 
	there is a coboundary $\delta \in \cobo{1}{(\Lp/N, F_{\Kp})}$ such that, for all $\varphi_k \in C_{\Lp}$, there are 
	a degree one character $\psi \in \Lin(G)$ and a homomorphism $\nu \in \Gammap$ such that
		\[
			\leftexp{y_k}{\hat{\theta}} =  \hat{\theta} \cdot  \mu (y_k N)  \delta(y_k N) \nu \psi\lvert_{\Kp}.
		\]
	Substituting \eqref{eq:theta^_as_ind} in the last equation we obtain 
		\begin{equation}
		\label{eq:defining_mu_with_ind}
			\leftexp{y_k}{\big( \Ind_{H, \hat{\alpha}}^{\Kp} \hat{\chi} \big)} =  
					\big( \Ind_{H, \hat{\alpha}}^{\Kp} \hat{\chi} \big)
						 \cdot  \mu (y_k N)  \delta(y_k N) \nu \psi\lvert_{\Kp}.
		\end{equation}
	Let $\hat{\beta}$ be the factor set of $\leftexp{y_k}{\hat{\theta}}$. The left-hand side is equal to 
		\[
		 	\Ind_{\varphi_k(H), \hat{\beta}}^{\Kp} \leftexp{y_k}{\hat{\chi}}
		\]
	The right-hand side is equal to
		\[
			\Ind_{H, \hat{\beta}}^{\Kp} \big(  \hat{\chi}  \cdot  (\mu (y_k N)  \delta(y_k N) \nu \psi)\lvert_{H} \big).
		\]
	Note that $\mu (y_k N)  \delta(y_k N) \nu$ has factor set $\hat{\beta}\hat{\alpha}^{-1}$ because of \eqref{eq:defining_mu_with_ind}. 
	By Mackey's intertwining formula we therefore have that equation~\eqref{eq:defining_mu_with_ind} holds if and only if
	there are $n \in N$ and $i \in \{ 1, \dots, \indexKpN\}$ such that 
		\begin{multline*}
 				 \leftexp{y_k}{\hat{\chi}}\lvert_{(\varphi_k(H) \cap {\varphi_i(\leftexp{n}{H})})} 
 				        = \leftexp{y_i n}{\hat{\chi}}\lvert_{(\varphi_k(H) \cap{\varphi_i( \leftexp{n}{H)}})}  
					    \,   \leftexp{y_i n}{(\mu (y_k N)  \delta(y_k N) \nu \psi)}
					    \lvert_{(\varphi_k(H) \cap {\varphi_i(\leftexp{n}{H})})}.
		\end{multline*}
	In other words, since $\nu$ and $\psi$ are fixed by $\Kp$, equation~\eqref{eq:defining_mu_with_ind} holds if and only if
		\begin{multline}
			\label{eq:pred_before_B}
			\leftexp{y_k}{\hat{\chi}}(y_j n') = \\
				 \leftexp{y_i n}{\hat{\chi}}(y_j n')
					\cdot  \mu (y_k N)(y_{\kappa(i,j)} N) \cdot  \delta(y_k N) (y_{\kappa(i,j)} N) \cdot \nu(y_j N) \cdot  \psi(y_j) \psi(n'),
		\end{multline}
	for all $n' \in N$ and $j \in \{ 1, \dots, \indexKpN\}$ such that $y_j n' \in \varphi_k(H) \cap {\varphi_i(\leftexp{n}{H})}$. 
	By Lemma~\ref{lem:conjugating_chi^} we have 
		\begin{align*}
			\leftexp{y_k}{\hat{\chi}}(n' y_j)	 	&=  \chi (t_{\kappa(k,j)}^{-1}  d_{kj}  \varphi_k^{-1} \big( n' \big) )\\
			\leftexp{y_i n}{\hat{\chi}}(n' y_j)		&= \chi \big( t_{\kappa(i,j)}^{-1} \varphi_{\kappa(i,j)}^{-1}(n^{-1})d_{ij} 
										\varphi_i^{-1}( n' ) n \big).
		\end{align*}
	Substituting in \eqref{eq:pred_before_B} gives $\mathbf{B}_{ijk}(\chi, \psi, \nu, n, n')$. 
	Moreover, by Lemma~\ref{lem:cond_H}, $y_j n'  \in \varphi_k(H)$ if and only if 
	${\varphi_{j}(\leftexp{n'}{n})}  \in \varphi_k(N\cap H)$ and 
		\[
			\chi (\varphi_k^{-1}(\varphi_{j}(\leftexp{n'}{n''}))) = \chi (\varphi_k^{-1}(n''))
		\]
	for all $n'' \in \varphi_k(N\cap H)$. These two conditions form $\mathbf{A}_{kj}(H, \chi, n, n')$. Similarly,
	$y_j n'  \in  \leftexp{n}{\varphi_i(H)}$ if and only if $\mathbf{A}_{ij}(H, \chi, n, n')$ holds.\par 
	We finish the proof observing that $\chi, \psi, \mu(y_k N)$ and  $\nu$ all have values in $W_{(p)}$. 
	Thus if there is $\delta \in \cobo{1}{(\Lp/N, F_{\Kp})}$ satisfying the conditions above, then necessarily 
	$\delta \in \onecobop$. We may therefore restrict to $\delta \in \onecobop$ in the 
	equivalence statement.
	\end{proof}
\subsection{Definable sets for degree one characters of subgroups of $G$.}
For the rest of this paper $\Lan$ will denote the analytic Denef-Pas language and $\man$ will be the $\Lan$-structure 
$\man = (\Q_p, \Z \cup \{\infty\}, \mathbb{F}_p, \dots)$ (see \cite[\defMan]{Rationality1} for precise definitions).
The aim of the last sections is showing that the predicates we derived thus far are expressible as first order formulas in $\Lan$.
We will often derive formulas for the language of uniform pro-$p$ groups $\lan_N$ described in \cite[Definition~1.13]{duSautoy-rationality} 
and then use the fact that the $\lan_N$-structure $(N,\dots)$ is definably interpretable in  
$\man$ (see \cite[\lemInterpret]{Rationality1}).\par
From now onwards $\uniform$ will be a uniform open normal subgroup of $G$. 
We fix a minimal set of topological generators $n_1,\dots, n_d$ for 
$N$. By \cite[Proposition~3.7]{DdSMS}, $N$ is in bijection with $\Z_p^d$ 
via the map $(\lambda_1,\dots, \lambda_d)\mapsto n_1^{\lambda_1}\cdots n_d^{\lambda_d}$. If $g\in N$ is such that 
$ g = n_1^{\lambda_1}\cdots n_d^{\lambda_d}$ for some $\lambda_1, \dots, \lambda_d \in \Z_p$ we say that
$(\lambda_1,\dots, \lambda_d)$ are its $\Z_p$-coordinates (with respect to $n_1, \dots, n_d$). We recall that, 
as proved in \cite{duSautoy-rationality}, every open subgroup of $N$ admits a special set of $d$ topological 
generators called a {\em good basis} (cf.\ \cite[\defGoodBasis]{Rationality1}). This is key to expressing 
$\cH(K_p)$ as a definable subset of $\Z_p^{d(d+r)}$. We recall the definition of {\em basis} for a subgroup in $\cH(K_p)$
from \cite[Definition~2.10]{duSautoy-rationality} and
\cite[p.~261]{duSautoy-Segal-in-Horizons} 
(note that we use left cosets instead of du~Sautoy's right coset convention).
\begin{defn}
\label{def:basis}
	Let $H\in\cH(K_p)$. A $(d+\indexPN)$-tuple $(h_{1},\dots,h_{d},t_{1},\dots,t_{\indexPN})$
	of elements in $N$ is called a \emph{basis} for $H$ if
	\begin{enumerate}
		\item $(h_{1},\dots,h_{d})$ is a good basis for $N\cap H$, and
		\item $(y_{1}t_{1},\dots,y_{\indexPN}t_{\indexKpN})$ is a (left) transversal for $N\cap H$ 
		in $H$.
	\end{enumerate}
\end{defn}
We shall now show how to interpret predicates that involve quantifying on $\Lin(G)$ and other groups 
of characters. For this (and also for later), we need the following definition (taken from \cite[\propDc]{Rationality1}). 
Note that here (and later) we have denoted by $\iota$, the isomorphism $ \Q_p / \Z_p \to W_{(p)}$ 
defined as  $a/p^m+\Z_p \mapsto e^{2\pi i a/p^m}$.
\begin{defn}
	\label{def:D_c}
	Let $c \in \coho{2}(K_p/ \uniform)$, we define $\cD^c$ as the set of pairs 
	$(\tuple{\lambda}, \tuple{\xi})$, $\tuple{\lambda}\in \M_{d\times (d + \indexPN)}(\Z_p)$, $\tuple{\xi}=(\xi_1,\dots,\xi_d)\in \Q_p^{d}$  such that:
		\begin{enumerate}
			\item \label{pro:X_definable_1} the columns of $\tuple{\lambda}$ are the $\Z_p$-coordinates with respect to 
			$n_1, \dots, n_d$ of a basis $(h_1,\dots, h_d, t_1,\dots, t_\indexPN)$ for some subgroup 
			$H \in \cH(K_p)$.
			\item \label{pro:X_definable_2} The function $\{h_1,\dots, h_d\} \rightarrow \Q_p/\Z_p$, $h_i\mapsto \xi_i + \Z_p$, 
			extends to a (necessarily unique) continuous $H$-invariant homomorphism 
				\[
					\chi: N \cap H\longrightarrow \Q_p/\Z_p.
				\]
			\item \label{pro:X_definable_3} $\Ind_{N\cap H}^N (\iota \circ \chi) \in 
			\Irr_\stgroup(\uniform)$,
			\item \label{pro:X_definable_4} $\cC(H, (\iota \circ \chi)) = c$.
		\end{enumerate}
	\end{defn}
\propDc\ in \cite{Rationality1} shows that $\cD^c$ is a definable subset of $\Q_p^{d(d+r+1)}$
in $\man$.
\subsubsection{Definable set for twisting characters.}
We show that characters $\tau\in\Lin(N)$, such that $\tau = \psi\lvert_N$ 
for some $\psi \in \Lin(G)$, may be definably parametrised in $\man$, in a way that keeps track of the values 
of $\psi(y_i)$ for $i =1, \dots, \indexKpN$. Notice that, since $\Kp$ is a pro-$p$ group, every $\psi\in \Lin(G)$ 
is such that $\psi(y_i) \in W_{(p)}$ for all $i =1, \dots, \indexKpN$.\par
For the following lemma, and for later use, we define a 
function 
	\begin{equation}
	\label{eq:gamma_ext}
	\begin{gathered}
		\gamma: \{1,\dots,\indexGN\}^2\rightarrow \{1,\dots,\indexGN\}	\text{ and }	a_{ij} \in N \text{ by}\\
		y_{i}y_{j} = y_{\gamma(i,j)}a_{ij}.
	\end{gathered}
	\end{equation}
\begin{lem}
\label{lem:ext_N_G}
Let $\tau \in \Lin(N)$ and let $\sigma_1, \dots, \sigma_\indexKpN \in W_{(p)}$. 
Then $\tau = \psi\lvert_N$ for some $\psi \in \Lin(G)$ such that $\psi(y_i) = \sigma_i $ 
for $i =1, \dots, \indexKpN$, if and only if there are $\sigma_{\indexKpN + 1}, \dots, \sigma_{\indexGN} \in W_{(p)}$, 
such that, for $i, j \in \{1, \dots, \indexGN\}$ and all $n, n'\in N$,
		\[
			 \sigma_{\gamma(i,j)} \tau(a_{ij} \varphi_j^{-1}(n) n') = \sigma_i \sigma_j \tau(n) \tau (n').
		\]
\end{lem}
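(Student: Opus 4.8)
The plan is to prove the ``if'' direction constructively, building a linear character $\psi$ of $G$ out of $\tau$ and the $\sigma_i$, and then to deduce the ``only if'' direction by reading the same identity off a given $\psi$. For the ``if'' direction, assume $\sigma_{\indexKpN+1},\dots,\sigma_{\indexGN}\in W_{(p)}$ satisfy the displayed identity. First I would specialise it to $i=j=1$, $n=n'=1$; since $y_1=1$ we have $\gamma(1,1)=1$, $a_{11}=1$ and $\varphi_1=\Id_N$, so the identity collapses to $\sigma_1=\sigma_1^2$, giving $\sigma_1=1$. Then I would \emph{define} $\psi\colon G\to\C^\times$ by $\psi(y_in)=\sigma_i\tau(n)$ for $n\in N$; this is well defined because $G=\bigsqcup_i y_iN$, and continuous because $N$ is open and $\psi$ restricted to each coset $y_iN$ is a translate of the continuous map $\tau$.

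The heart of the argument is the verification that $\psi$ is multiplicative. Given $g=y_in$ and $h=y_jn'$ with $n,n'\in N$, I would move $n$ past $y_j$ and invoke \eqref{eq:gamma_ext} to obtain the coset decomposition $gh=y_{\gamma(i,j)}\bigl(a_{ij}\varphi_j^{-1}(n)\,n'\bigr)$, with $a_{ij}\varphi_j^{-1}(n)n'\in N$; hence $\psi(gh)=\sigma_{\gamma(i,j)}\tau\bigl(a_{ij}\varphi_j^{-1}(n)n'\bigr)$, which equals $\sigma_i\sigma_j\tau(n)\tau(n')=\psi(g)\psi(h)$ precisely by the hypothesised identity. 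Thus $\psi\in\Lin(G)$, and by construction $\psi\lvert_N=\tau$ (using $\sigma_1=1$) and $\psi(y_i)=\sigma_i$ for every $i$, in particular for $i\le\indexKpN$.

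For the converse, suppose $\psi\in\Lin(G)$ satisfies $\psi\lvert_N=\tau$ and $\psi(y_i)=\sigma_i$ for $i\le\indexKpN$. Applying $\psi$ to the decomposition $y_iny_jn'=y_{\gamma(i,j)}a_{ij}\varphi_j^{-1}(n)n'$ shows at once that $\sigma_i:=\psi(y_i)$ satisfies the identity; the only point needing care is that these values should lie in $W_{(p)}$, whereas a priori $\psi(G)$ is merely some finite cyclic subgroup of $\C^\times$. I would handle this by replacing $\psi$ with $\psi_{(p)}=\pi_p\circ\psi$, which is again an element of $\Lin(G)$ (a continuous homomorphism, $\psi$ having finite image). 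Since $N$ is pro-$p$, $\tau=\psi\lvert_N$ already takes values in $W_{(p)}$, so $\psi_{(p)}\lvert_N=\tau$, and since $\sigma_1,\dots,\sigma_{\indexKpN}\in W_{(p)}$ by hypothesis we still have $\psi_{(p)}(y_i)=\sigma_i$ for $i\le\indexKpN$; after this replacement $\psi$ is $W_{(p)}$-valued, so $\sigma_i:=\psi(y_i)\in W_{(p)}$ for $i>\indexKpN$ completes the proof. I do not anticipate a genuine obstacle: the statement is in essence the assertion that a function on $G$ extending $\tau$ and prescribed on the $y_i$ is a homomorphism iff it is compatible with the multiplication table $y_iy_j=y_{\gamma(i,j)}a_{ij}$, the only mild subtlety being the $W_{(p)}$-valuedness just discussed.
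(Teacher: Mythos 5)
Your proof is correct and takes essentially the same route as the paper: both directions hinge on the multiplication formula $y_iny_jn'=y_{\gamma(i,j)}a_{ij}\varphi_j^{-1}(n)n'$, the ``if'' direction defines $\psi(y_in)=\sigma_i\tau(n)$ and reads multiplicativity off the hypothesised identity, and the ``only if'' direction replaces $\psi$ by $\psi_{(p)}$ to guarantee $W_{(p)}$-valuedness (exactly as the paper does, using that $N$ is pro-$p$ so $\tau$ is already $W_{(p)}$-valued and that the given $\sigma_1,\dots,\sigma_{\indexKpN}$ already lie in $W_{(p)}$). Your explicit derivation of $\sigma_1=1$ and the remark on continuity are small, harmless additions that the paper leaves implicit.
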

	\begin{proof}
	We have
		\begin{equation}
		\label{eq:multiplying_ys}
			y_i n y_j n'= y_i y_j y_j^{-1} n y_j n' = y_i y_j \varphi_j^{-1}( n ) n' = y_{\gamma(i,j)} a_{ij} \varphi_j^{-1}( n ) n' .
		\end{equation}
	Assume that $\psi \in \Lin(G)$ restricts to $\tau$. Then, since $\Kp$ is a pro-$p$ 
	group, $\psi$ and $\psi_{(p)}$ restrict to the same character of $K$ (hence also $\psi_{(p)}$ restricts to $\tau$). 
	Set $\sigma_i = \psi_{(p)}(y_i)$ (for $i = 1, \dots, \indexGN$). 
	On the one hand, $\psi(y_i n y_j n') = \sigma_i \sigma_j \tau(n) \tau (n') $
	and, on the other hand,
		\[
			\psi(y_i n y_j n') = \sigma_{\gamma(i,j)} \tau(a_{ij} \varphi_j^{-1}( n ) n' )
		\]
	 by \eqref{eq:multiplying_ys}.\par
	 %
	Conversely,  assume there exist  $\sigma_1, \dots, \sigma_{\indexGN} \in W_{(p)}$ such that 
	for $i, j \in \{1, \dots, \indexGN\}$, and all $n, n'\in N$, 
	$ \sigma_{\gamma(i,j)} \tau(a_{ij} \varphi_j^{-1}(n) n') = \sigma_i \sigma_j \tau(n) \tau (n')$. Then 
		\[
			\psi (y_i n) =  \sigma_i \tau(n) \qquad n \in N,\, i = 1, \dots, \indexGN
		\]
	defines a homomorphism $G \to W_{(p)}$. Indeed, 
		\[
			 \sigma_{\gamma(i,j)} \tau(a_{ij} \varphi_j^{-1}(n) n') = \sigma_i \sigma_j \tau(n) \tau (n') = \psi(y_i n)\psi(y_jn').
		\]
	Moreover, $\psi(y_i n y_j n') =  \sigma_{\gamma(i,j)} \tau(a_{ij} \varphi_j^{-1}(n) n')$ 
	by \ref{eq:multiplying_ys}. Thus we get $\psi(y_i n y_j n') =  \psi(y_i n)\psi_p(y_j n')$. Clearly $\psi\lvert_N = \tau$ and 
	we conclude. 
	\end{proof}
Let $\tuple{\lambda}_0 \in \M_{(d + r) \times d}(\Q_p)$ whose rows correspond to the $\Z_p$ coordinates of a
basis of $K_p$ (as an element of $\cH(K_p)$). Let $\cD^G$ be the projection on the $\tuple{\xi}$-component of the set 
	\[
		\{  (\tuple{\lambda}, \tuple{\xi}) \in \cD^1 \mid \tuple{\lambda} = \tuple{\lambda}_0 \}
	\]
where $\cD^1$ is as in Definition~\ref{def:D_c} for $K = G$. Clearly  $\cD^G$ is a definable subset of 
$\Q_p^d$ in $\man$. By definition, the first $d$ rows of $\tuple{\lambda}_0$ are the $\Z_p$-coordinates 
of a good basis of $N$. Thus $\cD^G$ is precisely the set of $d$-tuples $\tuple{\xi}$ 
such that the function 
	\[
		\{n_1,\dots, n_d\} \longrightarrow \Q_p/\Z_p, \qquad n_i\longmapsto \xi_i + \Z_p,
	\] 
extends to a (necessarily unique) continuous homomorphism 
$\tau : N \cap H\longrightarrow \Q_p/\Z_p$, such that $\iota \circ \tau = \psi\lvert_N$ for some 
$\psi \in \Lin(G)$.
We are now able to introduce the definable set parametrising twisting characters. 
For ease of notation we introduce the following definition.
\begin{defn}
An $\Lan$-formula whose free variables are exactly $x_1, \dots, x_n$ is called an {\em $\Lan$-condition} on $x_1, \dots, x_n$.
\end{defn}
\begin{prop}
\label{pro:X_definable-lin}
Let  $\cD^G_\Kp$ be the set of tuples of the form $(\xi_1, \dots, \xi_d, \sigma_1, \dots, \sigma_\indexKpN) 
\in \Q_p^{d + \indexKpN}$ such that the function 
	\[
		\{y_i n_j \mid i = 1, \dots, \indexKpN,\, j = 1, \dots, d \}\rightarrow \Q_p/\Z_p, \qquad y_i n_j \longmapsto \sigma_i + \xi_j + \Z_p,
	\] 
extends to a (necessarily unique) continuous homomorphism $\sigma: G \to \Q_p / \Z_p$. 
Then $\cD^G_\Kp$ is a 
definable set of $\Q_p^{d + \indexKpN}$ in $\man$. 
\end{prop}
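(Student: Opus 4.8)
The plan is to reduce the defining condition of $\cD^G_\Kp$ to the extension criterion of Lemma~\ref{lem:ext_N_G} and then to recognise each ingredient of that criterion as an $\Lan$-condition, using the interpretability of the uniform group $N$ in $\man$ together with the definability machinery behind \propDc\ in \cite{Rationality1}.

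First I would unwind the definition of $\cD^G_\Kp$. If a continuous homomorphism $\sigma\colon G\to\Q_p/\Z_p$ extends the assignment $y_in_j\mapsto\sigma_i+\xi_j+\Z_p$, then, since $y_1=1$, one has $\sigma(n_j)=\xi_j+\sigma_1+\Z_p$ and $\sigma(y_i)=\sigma_i-\sigma_1+\Z_p$; conversely, prescribing $\sigma$ on $N$ and on $y_1,\dots,y_\indexKpN$ in this way and asking for a homomorphic extension to $G$ is exactly the setting of Lemma~\ref{lem:ext_N_G}. Composing with the isomorphism $\iota\colon\Q_p/\Z_p\to W_{(p)}$ — and using that $\iota$ carries a continuous homomorphism $G\to\Q_p/\Z_p$ to an element of $\Lin(G)$, while conversely the characters produced in Lemma~\ref{lem:ext_N_G} are $W_{(p)}$-valued — I obtain the following description. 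Writing $\rho_i=\sigma_i-\sigma_1$ for $1\le i\le\indexKpN$, the tuple $(\xi_1,\dots,\xi_d,\sigma_1,\dots,\sigma_\indexKpN)$ lies in $\cD^G_\Kp$ if and only if the assignment $n_j\mapsto\xi_j+\sigma_1+\Z_p$ extends to a (necessarily unique) continuous homomorphism $\tau\colon N\to\Q_p/\Z_p$ and there exist $\rho_{\indexKpN+1},\dots,\rho_\indexGN\in\Q_p$ such that
\[
\rho_{\gamma(i,j)}+\tau\bigl(a_{ij}\,\varphi_j^{-1}(n)\,n'\bigr)\equiv\rho_i+\rho_j+\tau(n)+\tau(n')\pmod{\Zp}
\]
for all $i,j\in\{1,\dots,\indexGN\}$ and all $n,n'\in N$, with $\gamma$ and $a_{ij}$ as in~\eqref{eq:gamma_ext}. (If preferred, specialising $i=j=1$ shows this relation already forces $\rho_1\equiv0$, and, since $\tau$ maps into an abelian group, one may rewrite the relation as the conjunction of $G$-invariance of $\tau$ with the finite system $\rho_{\gamma(i,j)}+\tau(a_{ij})\equiv\rho_i+\rho_j\pmod{\Zp}$; this trades evaluation of $\tau$ at a variable composite for the condition of $G$-invariance, which is condition~\ref{pro:X_definable_3} of Definition~\ref{def:D_c}.)

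Next I would check that every piece of this description is definable in $\man$. That $n_j\mapsto\xi_j+\sigma_1+\Z_p$ extends to a continuous homomorphism $N\to\Q_p/\Z_p$ is, after replacing $\xi_j$ by $\xi_j+\sigma_1$, condition~\ref{pro:X_definable_2} of Definition~\ref{def:D_c} with $H=N$, which is shown $\Lan$-definable in the proof of \propDc; that same proof also yields the auxiliary fact I rely on, namely that the map sending such a tuple together with the $\Z_p$-coordinates of an element of $N$ to the value of the associated homomorphism at that element is $\Lan$-definable (this is exactly the evaluation of a degree-one character already implicit in conditions~\ref{pro:X_definable_3} and~\ref{pro:X_definable_4} there). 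Since $N$ is interpretable in $\man$, the group operation of $N$ and the fixed automorphisms $\varphi_j$ (conjugation by $y_j$) are $\Lan$-definable, as are the fixed elements $a_{ij}\in N$ and the fixed function $\gamma$, so the $\Z_p$-coordinates of $a_{ij}\varphi_j^{-1}(n)n'$ are $\Lan$-definable in those of $n$ and $n'$, and the displayed congruence modulo $\Zp$ is a valuation inequality, hence an $\Lan$-condition. Finally, the quantifier over $i,j\in\{1,\dots,\indexGN\}$ ranges over a fixed finite set and unfolds into a finite conjunction, the quantifiers over $n,n'\in N$ are admissible after interpretation, and the existential quantifier over $\rho_{\indexKpN+1},\dots,\rho_\indexGN$ is over $\Q_p$ (the relation depending on these only modulo $\Zp$). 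Assembling these yields an $\Lan$-formula in the free variables $\xi_1,\dots,\xi_d,\sigma_1,\dots,\sigma_\indexKpN$, exhibiting $\cD^G_\Kp$ as a definable subset of $\Q_p^{d+\indexKpN}$.

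The genuinely mathematical content is light here: the proposition is essentially a repackaging of Lemma~\ref{lem:ext_N_G}. The points requiring care are the two bookkeeping normalisations — the shift induced by $y_1=1$, and the systematic translation between $\Lin(G)$-characters valued in $W_{(p)}$ and continuous homomorphisms $G\to\Q_p/\Z_p$ via $\iota$, both already implicit in Lemma~\ref{lem:ext_N_G} — and the single external input, the $\Lan$-definability of evaluation of the homomorphism parametrised by $\tuple{\xi}$, which is part of the proof of \propDc\ in \cite{Rationality1} and without which conditions~\ref{pro:X_definable_3} and~\ref{pro:X_definable_4} there could not have been shown definable.
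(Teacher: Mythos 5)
Your proof is correct and follows essentially the same route as the paper's: apply Lemma~\ref{lem:ext_N_G} to translate membership in $\cD^G_\Kp$ into the cocycle-type extension criterion, then observe that each ingredient --- the definable set $\cD^G$, the interpretability of $N$ in $\man$ with its fixed automorphisms $\varphi_j$ and constants $a_{ij}$, the $\Lan$-definable evaluation of the homomorphism parametrised by $\tuple{\xi}$, and the finite quantifiers over the coset indices and over the auxiliary values $\rho_{r+1},\dots,\rho_m$ --- is $\Lan$-definable. If anything you are more explicit than the paper about the normalisation forced by $y_1=1$, namely that $\sigma|_N$ is given by $n_j\mapsto\xi_j+\sigma_1+\Z_p$ and $\sigma(y_i)=\sigma_i-\sigma_1+\Z_p$, whereas the paper's proof passes from $\tuple{\xi}$ directly to the condition $\tuple{\xi}\in\cD^G$ without flagging that shift.
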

	\begin{proof}
	By Lemma~\ref{lem:ext_N_G}, a tuple $(\xi_1, \dots, \xi_d, \sigma_1, \dots, \sigma_\indexKpN)$ is in $\cD^G_\Kp$ if and only 
	if 
		\begin{enumerate}
			\item \label{pro:X_definable-lin_pf_1}
			$\tuple{\xi} = (\xi_1, \dots, \xi_d) \in \cD^G$, 
			\item \label{pro:X_definable-lin_pf_2}
			and, denoting by $\tau: N \to \Q_p/\Z_p$ the homomorphism defined by $\tuple{\xi}$, there are 
			$\sigma_{\indexKpN + 1}, \dots, \sigma_\indexGN \in  \Q_p$ such that for 
			$i, j \in \{1, \dots, \indexGN\}$ and all $n, n'\in N$,
				\[
					 (\sigma_{\gamma(i,j)}   - \sigma_i -  \sigma_j) + \Z_p =
					 	 \tau(n) + \tau (n') -  \tau(a_{ij} \varphi_j^{-1}(n) n'),
				\]
			where $\gamma$ and $a_{ij}$ are as in \eqref{eq:gamma_ext}.
		\end{enumerate}
	Clearly, \ref{pro:X_definable-lin_pf_1} is a definable condition because $\cD^G$ is definable. 
	By the definable interpretation of $\struc_N$ in $\man$ and using $\tuple{\xi}$ to express the values of $\tau$, 
	we see that also \ref{pro:X_definable-lin_pf_2} is given by an $\Lan$-condition on 
	$ (\xi_1, \dots, \xi_d, \sigma_1, \dots, \sigma_\indexKpN)$.
	\end{proof}
\subsubsection{Definable set for $\Gamma_{\Kp, \tic{\theta}}$.}
By definition of $\cD^c$ there is a surjective map $\Psi: \cD^c \rightarrow \cC^{-1}(c)$ defined by 
$(\tuple{\lambda}, \tuple{\xi})\mapsto (H, \chi)$ where $H\in\cH(K_p)$ is the subgroup corresponding to 
the basis $(h_1,\dots, h_d, t_1,\dots, t_\indexPN)$ in \cite[\propDc\,(i)]{Rationality1}
and $\chi$ is as in \cite[\propDc\,(ii)]{Rationality1}.\par
Let $(\tuple{\lambda}, \tuple{\xi}) \in \cD^c$ and let 
$(H, \chi) = \Psi (\tuple{\lambda}, \tuple{\xi})$. Set $\theta = \Ind_{N \cap H}^N \chi$. We shall now 
produce a definable set that will be used to interpret predicates quantifying over $\Gamma_{\Kp, \tic{\theta}}$.
\begin{defn}
We define $\cD_{\Kp/N}$ as the set of tuples $(\sigma_1, \dots, \sigma_\indexKpN) \in \Q_p^{\indexKpN}$ 
giving a function $\Kp/N \rightarrow \Q_p/\Z_p$  defined by
	\[
		y_i N \longmapsto \nu_i + \Z_p \qquad \text{for } i \in \{ 1, \dots, \indexKpN\},
	\] 
extending to a homomorphism $\Kp/N \to \Q_p/\Z_p$.
\end{defn}
Clearly $\tuple{\nu} \in \cD_{\Kp/N}$  if and only if for $i, j \in \{1, \dots, \indexKpN\}$, $\sigma_{\gamma(i,j)} = 
\sigma_i + \sigma_j \mod \Z_p$. Thus $\cD_{\Kp/N}$ is a definable set. 
\begin{lem}
\label{lem:def_Gamma}
Let  $\cD_{\Kp}(\tuple{\lambda}, \tuple{\xi})$ be the set of tuples of the form 
$(\nu_1, \dots, \nu_\indexKpN) \in \Q_p^{\indexKpN}$ such that the function 
$\bar{\nu} : \Kp/N \rightarrow \Q_p/\Z_p$ defined by
	\[
		y_i N \longmapsto \nu_i + \Z_p\qquad \text{for } i \in \{ 1, \dots, \indexKpN\},
	\] 
is a homomorphism such that $\iota \circ \bar{\nu} \in \Gamma_{\Kp,\tic{\theta}}$. 
Then $\cD_{\Kp}(\tuple{\lambda}, \tuple{\xi})$ is a definable subset of $\Q_p^{\indexKpN}$ in $\man$. 
\end{lem}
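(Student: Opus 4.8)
The plan is to write the defining condition of $\cD_{\Kp}(\tuple{\lambda},\tuple{\xi})$ as an $\Lan$-condition on $(\nu_{1},\dots,\nu_{\indexKpN})$. Since $\Gamma_{\Kp,\tic{\theta}}\subseteq\Lin(\Kp/N)$, a tuple lies in $\cD_{\Kp}(\tuple{\lambda},\tuple{\xi})$ precisely when it lies in $\cD_{\Kp/N}$ (already shown to be definable) and, moreover, $\iota\circ\bar{\nu}\in\Gamma_{\Kp,\tic{\theta}}$; so it suffices to produce an $\Lan$-condition equivalent to this second clause. Throughout, $(H,\chi)=\Psi(\tuple{\lambda},\tuple{\xi})$ and $\theta=\Ind_{N\cap H}^{N}\chi$ are fixed, and I will use the explicit strong extension $\hat{\theta}=\Ind_{H,\hat{\alpha}}^{\Kp}\hat{\chi}$ of $\theta$ to $\Kp$ with $\hat{\chi}(y_{i}t_{i}n)=\chi(n)$ from the setup preceding Lemma~\ref{lem:conjugating_chi^} (cf.\ \cite[\propRedLinear]{Rationality1}); this is legitimate because $\Gamma_{\Kp,\tic{\theta}}$ does not depend on the choice of strong extension.

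By Definition~\ref{def:Gamma} with $K=\Kp$, the clause $\iota\circ\bar{\nu}\in\Gamma_{\Kp,\tic{\theta}}$ is equivalent to the existence of $\psi\in\Lin(G)$ with $\hat{\theta}\,\psi\lvert_{\Kp}=\hat{\theta}\cdot(\iota\circ\bar{\nu})$, equivalently $\leftexp{y_{1}}{\hat{\theta}}=\hat{\theta}\cdot(\iota\circ\bar{\nu})\,\psi^{-1}\lvert_{\Kp}$. This is the $k=1$ special case of the equation $\leftexp{y_{k}}{\hat{\theta}}=\hat{\theta}\cdot\mu(y_{k}N)\delta(y_{k}N)\nu\psi\lvert_{\Kp}$ treated in the proof of Proposition~\ref{prop:Linearisation_twist}, with the factor $\mu(y_{1}N)\delta(y_{1}N)$ trivial, with $\iota\circ\bar{\nu}$ in the role of $\nu$, and with $\psi^{-1}$ in the role of $\psi$ (quantifying over $\psi$ and over $\psi^{-1}$ being the same). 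The Mackey-intertwining computation of that proof — which uses $\hat{\chi}(y_{i}t_{i}n)=\chi(n)$, Lemma~\ref{lem:conjugating_chi^} and Lemma~\ref{lem:cond_H}, and exploits that $\hat{\chi}$ has degree one — then shows that this equation holds if and only if there are $i\in\{1,\dots,\indexKpN\}$ and $n\in N$ such that for all $j\in\{1,\dots,\indexKpN\}$ and all $n'\in N$,
\[
\mathbf{A}_{ij}(H,\chi,n,n')\wedge\mathbf{A}_{1j}(H,\chi,1,n')\Longrightarrow\mathbf{B}^{*}_{ij}(H,\chi,\psi,\iota\circ\bar{\nu},n,n'),
\]
where $\mathbf{B}^{*}_{ij}$ is the obvious modification of $\mathbf{B}_{ijk}$ (taken at $k=1$) in which the factor $\mu(y_{k}N)(y_{\kappa(i,j)}N)\,\delta(y_{k}N)(y_{\kappa(i,j)}N)$ is deleted and $\iota\circ\bar{\nu}$ is put for $\nu$; it involves only the values $\psi(y_{j})$ and $\psi(n')$ of $\psi$ and the value $\iota(\nu_{\kappa(i,j)})$ of $\iota\circ\bar{\nu}$.

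To see that this is an $\Lan$-condition on $(\nu_{1},\dots,\nu_{\indexKpN})$, I will eliminate the existential quantifier over $\psi\in\Lin(G)$ using Proposition~\ref{pro:X_definable-lin}: such $\psi$ are parametrised by the definable set $\cD^{G}_{\Kp}$, which records exactly $\psi\lvert_{N}$ and the values $\psi(y_{i})$ for $i\le\indexKpN$, so the finitely many values of $\psi$ occurring above are definable functions of that parameter. The predicates $\mathbf{A}_{ij}$ are $\lan_{N}$-conditions by Lemma~\ref{lem:cond_H} and its proof, and $\mathbf{B}^{*}_{ij}$ is likewise an $\lan_{N}$-condition once the fixed data $(H,\chi)$ — obtained from $(\tuple{\lambda},\tuple{\xi})$ via $\Psi$ — together with the transversal elements $t_{i}$, the elements $d_{ij}$, the index function $\kappa$ and the automorphisms $\varphi_{i}$ are inserted and $\chi$ is evaluated through the definable interpretation of $\struc_{N}$ in $\man$ (\cite[\lemInterpret]{Rationality1}). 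Conjoining with membership in $\cD_{\Kp/N}$ and quantifying the finitely many $\Q_{p}$-coordinates of $\psi$ over the definable set $\cD^{G}_{\Kp}$ then yields that $\cD_{\Kp}(\tuple{\lambda},\tuple{\xi})$ is a definable subset of $\Q_{p}^{\indexKpN}$ in $\man$.

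The only step that needs genuine care is the Mackey-intertwining translation of the middle paragraph, but that is literally the $k=1$ case of a computation already done in the proof of Proposition~\ref{prop:Linearisation_twist}; the modest extra bookkeeping — tracking where the twisting values $\psi(y_{i})$ and $\iota(\nu_{i})$ appear, and observing that $\psi$, existentially quantified here rather than given, is absorbed by $\cD^{G}_{\Kp}$ — is routine.
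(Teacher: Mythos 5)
Your proof is correct and takes essentially the same route as the paper: fix the strong extension $\hat{\theta}=\Ind_{H,\hat\alpha}^{K_p}\hat{\chi}$, unwind $\hat{\theta}\,\psi|_{K_p}=\hat{\theta}\cdot(\iota\circ\bar\nu)$ via Mackey's intertwining formula for projective characters, obtain a predicate of the form ``$\exists\, i,\,n,\,\psi$ such that for all $j,\,n'$, $\mathbf{A}$-conditions imply an explicit equation between values of $\chi$, $\iota\circ\bar\nu$ and $\psi$'', and then interpret it in $\man$ using the interpretation of $\struc_N$, the set $\cD_{K_p/N}$, and the definable parametrisation $\cD^G_{K_p}$ from Proposition~\ref{pro:X_definable-lin}. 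The one cosmetic difference is that, rather than re-running the Mackey computation from scratch and introducing a fresh predicate (the paper's $\mathbf{C}_{ij}$), you observe that the required equation is exactly the $k=1$ instance of the equation analysed in the proof of Proposition~\ref{prop:Linearisation_twist} with the $\mu$- and $\delta$-factors dropped (since $y_1=1$ makes them trivial) and $\psi$ replaced by $\psi^{-1}$; this lets you read off the desired predicate as a pruned version $\mathbf{B}^*_{ij}$ of $\mathbf{B}_{ij1}$. Since Proposition~\ref{prop:Linearisation_twist} precedes this lemma, the citation is not circular, and the reuse is legitimate and slightly more economical than the paper's self-contained version.
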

	\begin{proof}
	We start by expressing the definition of $\Gamma_{\Kp, \tic{\theta}}$ in terms of $(H, \chi)$. 
	To do this, we need to fix a strong extension $\hat{\theta}$ of $\theta$ (all strong extensions are equally good as 
	the definition of $\Gamma_{\Kp, \tic{\theta}}$ does not depend on this choice). We choose the strong extension obtained 
	by inducing to $\Kp$ the projective character $\hat{\chi}$ of $H$ defined by
	\[
		\hat{\chi}(y_{i}t_{i}n)=\chi(n),
	\]
 	for all $n\in N\cap H$ and $i \in \{ 1, \dots, \indexKpN\}$. To say that 
	$\nu \in \Lin(\Kp/N)$ belongs to $\Gamma_{\Kp, \tic{\theta}}$ is to say that there is $\varepsilon \in \Lin(G)$ such that 
		\[
			\hat{\theta}\nu = \hat{\theta} \varepsilon\lvert_{\Kp}.
		\]
	By Mackey's formula, this happens if and only if there exist $\varepsilon \in \Lin(G)$, $i \in \{ 1, \dots, \indexKpN \}$  and $n \in N$ such that
		\[
		 	(\leftexp{y_i n}{\hat{\chi}}\,\nu)\lvert_{H \cap \leftexp{y_i n}{H}}
					= (\hat{\chi}\, \varepsilon)\lvert_{H \cap \leftexp{y_i n}{H}}.
		\]
	In other words, if and only if there exist $\varepsilon \in \Lin(G)$, $i \in \{ 1, \dots, \indexKpN \}$  and $n \in N$ such that for all $j \in \{ 1,\dots, \indexKpN\}$ 
	and all $n' \in N$ we have 
		\begin{equation}
		\label{eq:chi_has_Gamma}
			y_j n' \in H \cap \leftexp{y_i n}{H} \Longrightarrow  \leftexp{y_i n}{\hat{\chi}} (y_j n') \nu(y_j) = 
							\hat{\chi}(y_j n') \varepsilon(y_j) \varepsilon(n).
		\end{equation}
	We now rewrite \eqref{eq:chi_has_Gamma} in a way that involves only quantifying over $N$ and $\Lin(G)$, conjugation by the chosen 
	coset representatives of $N$ in $\Kp$, and values of $\chi$ on $N\cap H$. First we observe that, 
	by Lemma~\ref{lem:cond_H} we may replace the antecedent with the predicate
		\[
			\mathbf{A}_{1j}(H, \chi, 1, n') \wedge \mathbf{A}_{ij}(H, \chi, n, n').
		\]
	Secondly, by Lemma~\ref{lem:conjugating_chi^}, we may replace the consequent in 
	\eqref{eq:chi_has_Gamma} by the predicate $\mathbf{C}_{ij}(H, \chi, \nu, \varepsilon, n, n')$ defined as
		\[
			 \chi \big( t_{\kappa(i,j)}^{-1} \varphi_{\kappa(i,j)}^{-1}(n^{-1})d_{ij} \varphi_i^{-1}( n' ) n \big) \nu(y_j) = \chi(t_j^{-1} n') \varepsilon(y_j) 
			 	\varepsilon(n)
		\]
	We obtain that $\nu \in \Gamma_{\Kp, \tic{\theta}}$ if and only if the following predicate is true 
		\begin{multline}
		\label{eq:predicate_Gamma}
			\exists \varepsilon \in \Lin(G) : 
				\bigvee_{i \in \{1, \dots, \indexKpN\} } \Big( \exists n \in N\\
								 \bigwedge_{j \in \{ 1, \dots, \indexKpN\}}
								 	 \big( \forall n'\in N : \mathbf{A}_{kj}(H, \chi, 1, n') \wedge \mathbf{A}_{ij}(H, \chi, n, n')  
										\Longrightarrow \mathbf{C}_{ij}(H, \chi, n, n', \nu, \varepsilon) \big)  \Big).
		\end{multline}
	The last predicate may be written as an $\Lan$-condition on $\tuple{\nu}$ and $(\tuple{\lambda} , \tuple{\xi})$:
	\begin{itemize}
		\item[-] we use the interpretation of $\struc_N$ in $\man$ to express elements in $N$. 
		\item[-] We use tuples in $\cD_{\Kp/N}$ to express the values of $\nu$. 
		\item[-] We interpret $\exists \varepsilon \in \Lin(G)$ as
			\[
				\exists (\xi_i, \dots, \xi_d, \sigma_1, \dots, \sigma_\indexKpN) \in \cD_{\Kp}^{G},
			\]
			using $(\xi_1, \dots, \xi_d)$ to express $\varepsilon$ on $N$ and $(\sigma_1, \dots, \sigma_\indexKpN)$ 
			to express 
				\[
					\varepsilon(y_1),\dots, \varepsilon(y_\indexKpN).
				\] 
		\item[-] We interpret multiplication in $W_{(p)}$ as addition in $\Q_p/\Z_p$ and equality in $W_{(p)}$ 
		as equality in $\Q_p/\Z_p$ via $\iota$.
	\end{itemize}
    Writing \eqref{eq:predicate_Gamma} with these rules gives an $\Lan$-condition on $\nu_1, \dots, \nu_\indexKpN$; thus, 
    $\cD_{\Kp}(\tuple{\lambda}, \tuple{\xi})$ is definable. 
	\end{proof}
\subsection{Definable sets for $\onecocyp$ and $\onecobop$} 
In this subsection we describe the definable sets used to interpret predicates 
quantifying over $\onecocyp$ and $\onecobop$. 
	\begin{lem}
	\label{lem:schur_def_sets-twist}
	Let $\Omega$ be the surjective map from the set of matrices $\M_{\indexLpN \times \indexKpN}(\Q_p)$ to 
	the set of functions $L_p/N \rightarrow F_{K_p}$,
	defined by 
		\[
			\Omega((z_{ij})) = \big[ y_iN \longmapsto \iota \circ f_i\big],\quad \text{for}\  i\in \lbrace 1, \dots, \indexLpN \rbrace.
		\]
	where for each $i$, $f_i: \Kp/N \to \Q_p / \Z_p$ is the function $y_j N \mapsto z_{ij} + \Z_p$, for $j\in \lbrace 1, \dots, \indexKpN \rbrace$.
	Define $\widetilde{\mathcal{Z}} = \Omega^{-1}(\onecocyp)$ and  $\widetilde{\mathcal{B}} = \Omega^{-1}(\onecobop)$. 
	Then $\widetilde{\mathcal{Z}}$ and 
	$\widetilde{\mathcal{B}}$ are definable in $\man$.

	\end{lem}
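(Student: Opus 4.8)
The plan is to unwind the definitions of $\onecocyp$ and $\onecobop$ into a system of congruences modulo $\Z_p$ on the matrix entries $z_{ij}$, using the structure constants of $G$ relative to the fixed transversal $(y_1,\dots,y_{\indexGN})$. The crucial simplification is that $\iota$ is a group isomorphism $\Q_p/\Z_p\to W_{(p)}$, so it turns multiplication in $W_{(p)}$ into addition modulo $\Z_p$; moreover a function $L_p/N\to\Func(K_p/N,W_{(p)})$ is completely determined by, and may be arbitrarily prescribed through, the values $\mu(y_iN)(y_jN)\in W_{(p)}$ for $i\in\{1,\dots,\indexLpN\}$ and $j\in\{1,\dots,\indexKpN\}$, so the matrix $(z_{ij})$ with $\iota(z_{ij}+\Z_p)=\mu(y_iN)(y_jN)$ is exactly the element $\Omega^{-1}(\mu)$, and in particular $\Omega$ is surjective. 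I would record at the outset that $K_p=K\cap L_p$ is normal in $L_p$ (because $K\trianglelefteq L$), so the index functions stay in range: $\gamma(i,i')\le\indexLpN$ for $i,i'\le\indexLpN$, and $\kappa(i,j)\le\indexKpN$ for $i\le\indexLpN$, $j\le\indexKpN$.

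For $\widetilde{\mathcal{Z}}$, I would evaluate the crossed-homomorphism identity $\mu(y_iy_{i'}N)=\mu(y_iN)\cdot\leftexp{y_i}{\mu(y_{i'}N)}$ at each coset $y_jN$. Writing $y_iy_{i'}=y_{\gamma(i,i')}a_{i,i'}$ with $a_{i,i'}\in N$, and using the module action $(\leftexp{y_i}{f})(y_jN)=f(y_i^{-1}y_jy_iN)=f(y_{\kappa(i,j)}N)$ (since $y_i^{-1}y_jy_i=y_{\kappa(i,j)}d_{ij}$ with $d_{ij}\in N$), this identity becomes, after applying $\iota^{-1}$, the finite system
\[
z_{\gamma(i,i'),j}-z_{ij}-z_{i',\kappa(i,j)}\in\Z_p\qquad\bigl(i,i'\in\{1,\dots,\indexLpN\},\ j\in\{1,\dots,\indexKpN\}\bigr).
\]
Each such condition has the form ``$v(\text{a fixed }\Z\text{-linear combination of the }z_{ij})\ge 0$'', which is an $\Lan$-formula; hence $\widetilde{\mathcal{Z}}$, being their finite conjunction, is definable in $\man$.

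For $\widetilde{\mathcal{B}}$, I would use that $\onecobop$ consists of the functions of the form $g\mapsto\leftexp{g}{\omega}\omega^{-1}$ with $\omega\in\Func(K_p/N,W_{(p)})$. Writing $\omega=\iota\circ\bar\omega$ with $\bar\omega:K_p/N\to\Q_p/\Z_p$ and lifting $\bar\omega(y_jN)=w_j+\Z_p$ to $w_j\in\Q_p$, the equality $\mu=(g\mapsto\leftexp{g}{\omega}\omega^{-1})$, evaluated at $y_jN$ and transported by $\iota^{-1}$, becomes $z_{ij}-w_{\kappa(i,j)}+w_j\in\Z_p$. Therefore
\[
\widetilde{\mathcal{B}}=\bigl\{(z_{ij})\in\M_{\indexLpN\times\indexKpN}(\Q_p)\ \bigm|\ \exists\,(w_1,\dots,w_{\indexKpN})\in\Q_p^{\indexKpN}\ \forall i,j:\ z_{ij}-w_{\kappa(i,j)}+w_j\in\Z_p\bigr\},
\]
an existential quantification over $\Q_p^{\indexKpN}$ of a finite conjunction of valuation conditions, hence again an $\Lan$-formula, and $\widetilde{\mathcal{B}}$ is definable in $\man$.

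I do not expect a genuine obstacle here: the argument is entirely bookkeeping. The main points that make it work are that $\iota$ linearises the coefficient group, that $\Func(K_p/N,W_{(p)})$ and the functions out of $L_p/N$ into it become finite tuples of $\Q_p$-parameters once the transversals are fixed, so that the cocycle and coboundary relations collapse to finitely many congruences, and that these only involve the valuation predicate together with $\Z$-linear arithmetic, which the Denef-Pas language underlying $\Lan$ certainly expresses. The only step requiring a moment's care is verifying that $\gamma$ and $\kappa$ restrict to the appropriate index sets, which is exactly the normality $K_p\trianglelefteq L_p$ noted above.
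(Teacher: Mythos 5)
Your proof is correct and takes essentially the same approach as the paper: push the $1$-cocycle and $1$-coboundary identities through $\iota$ to turn them into finitely many congruences modulo $\Z_p$ among the entries $z_{ij}$, using the structure constants $\gamma,\kappa$ of the fixed transversal, and observe that these are $\Lan$-conditions via the definable interpretation of $N$ in $\man$; the remark that $K_p\trianglelefteq L_p$ keeps $\kappa$ in range is the same bookkeeping the paper relies on.

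One point worth flagging: the cocycle condition you derive,
\[
z_{\gamma(i,i'),\,j}\equiv z_{i\,j}+z_{i',\,\kappa(i,j)}\pmod{\Z_p},
\]
is not literally the condition
\[
z_{\gamma(i,j)\,k}\equiv z_{jk}+z_{\kappa(i,j)\,k}\pmod{\Z_p}
\]
printed in the paper's proof. With the coadjoint action $(gN\cdot f)(xN)=f(g^{-1}xgN)$ and the identity $\mu(gg')=\mu(g)\cdot\leftexp{g}{\mu(g')}$, evaluating at $y_jN$ gives exactly your version, and indeed your version (unlike the printed one) is satisfied by every coboundary $z_{ij}=b_{\kappa(i,j)}-b_j$, via the identity $\kappa(\gamma(i,i'),j)=\kappa(i',\kappa(i,j))$. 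So the display in the published proof appears to contain an indexing slip; there is no gap on your side, and the definability conclusion is unaffected either way since both are finite Boolean combinations of valuation conditions on $\Z$-linear forms in the $z_{ij}$.
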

		\begin{proof}
		We prove that the set $\widetilde{\mathcal{Z}}$ is definable. Let $\mathbf{z} \in\M_{r\times \indexLpN}(\Q_p)$. Then  
		$\Omega(\mathbf{z}) \in \onecocyp$ if and only if the following definable predicate in $\man$ holds:
		 for all $i,j \in \lbrace 1, \dots, \indexLpN \rbrace$ and $k \in \lbrace 1, \dots, \indexKpN \rbrace$, 
					\[
						z_{\gamma(i,j)\, k} =  z_{jk} + z_{\kappa(i,j)\, k} \mod \Z_p.
					\]
		This is obtained by just pulling back the $1$-cocycle identity through $\Omega$. 
		More precisely, by definition, $\Omega(\tuple{z})$ is a $1$-coboundary if, for all 
		$i,j \in \lbrace 1, \dots, \indexLpN \rbrace$,
			\[
				\Omega(\tuple{z})(y_i y_jN) =	     \Omega(\tuple{z})(y_iN)  \leftexp{y_i}{\Omega(\tuple{z})(y_jN)};
			\]
		that is, if $f_{\gamma(i,j)} = f_j + f_{\kappa(i,j)}$. This in turn is equivalent to the condition that, 
		for all $k \in \{ 1, \dots, \indexKpN\}$, 
			\[
				f_{\gamma(i,j)}(y_k)  =  f_j(y_k) + f_{\kappa(i,j)}(y_k),
			\]
		or equivalently, $z_{\gamma(i,j)\, k} =  z_{jk} + z_{\kappa(i,j)\, k} \mod \Z_p$.\par
		We prove that $\widetilde{\mathcal{B}}$ is definable. We need to express the condition for being 
		a $1$-coboundary. To this end, we parametrise a function $K_p/N\rightarrow \Q_p/\Z_p$ 
		by the $\indexKpN$-tuple $(b_1,\dots,b_\indexKpN)\in \Q_p^\indexKpN$
		representing its values on $y_1N,\dots, y_{\indexKpN}N$. Writing the $1$-coboundary condition 
		in terms of $b_1,\dots,b_\indexKpN$ we obtain that $\Omega(\mathbf{z}) \in \onecobop$ 
		is a $1$-coboundary if and only if there is $(b_1, \dots, \allowbreak b_\indexKpN) \in \Q_p^{r}$ 
		such that, for all  $i\in \lbrace 1, \dots, \indexLpN \rbrace$ and $j \in \{ 1, \dots, \indexKpN\}$,
			\[
				z_{ij}    = b_{\kappa(i,j)} - b_j   \mod    \Z_p.
			\]
		This predicate in $\man$ is given by an $\Lan$ condition on the $z_{ij}$'s and the $b_j$'s; thus, the set is definable.
		\end{proof}
\subsection{Definability of the predicate $\cT_{L_p, K_p, \Gamma_p}(\tic{\theta}) = c'$}
We are now ready to give an interpretation of $\twirrparams$ in the structure $\man$. In this subsection we 
will construct a definable set $\cD^{c,c'}$ corresponding to $\twirrparams$ up to a definable equivalence 
relation (which we shall introduce in the next subsection). This 
correspondence will be explicit and we will have a definable function (also introduced in the next 
subsection) giving the degree of the corresponding character for every element in $\cD^{c,c'}$.\par
We start with the following lemma which can be proved using the fact that  
twisting by degree one characters and induction are compatible (see for instance
the proof of \cite[Lemma~8.6(b)]{hrumar2015definable}).
 \begin{lem}
\label{lem:conj_induced_tw}
Let $M$ be a finite index subgroup of $N$, $\chi \in \Lin(M)$ and $\psi \in \Lin(G)$. 
Then, for all $g \in G$,
	\[
		\leftexp{g}{\big(\mathrm{Ind}_M^N \chi\big)} \psi\lvert_{N} 
				= \mathrm{Ind}_{\leftexp{g}{M}}^N(\leftexp{g}{\chi} \psi\lvert_{\leftexp{g}{M}}).
	\]
Moreover if $M'$ is another finite index subgroup of $N$ and $\chi, \chi'$ are degree one characters 
of $M$ and $M'$ respectively, such that $\Ind_M^N \chi$ and $\Ind_{M'}^N \chi'$ are irreducible, then 
$(\Ind_M^N \chi )\psi\lvert_N = \Ind_{M'}^N \chi'$ if and only if there exists $g \in N$ such that 
$(\Res^{\leftexp{g}{M}}_{\leftexp{g}{M} \cap M'} \leftexp{g}{\chi})
\psi\lvert_{\leftexp{g}{M} \cap M'} = \Res^{M'}_{\leftexp{g}{M} \cap M'}  \chi'$.
\end{lem}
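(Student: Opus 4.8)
The plan is to prove this lemma via the standard compatibility between Clifford-theoretic induction and twisting by linear characters, combined with Mackey's restriction-induction theorem. First I would establish the first displayed identity. For any $g\in G$, the conjugate $\leftexp{g}{(\Ind_M^N\chi)}$ is $\Ind_{\leftexp{g}{M}}^N(\leftexp{g}{\chi})$, which is a routine computation with the character formula for induction: evaluating at $x\in N$,
\[
\leftexp{g}{(\Ind_M^N\chi)}(x)=(\Ind_M^N\chi)(g^{-1}xg)=\frac{1}{|M|}\sum_{\substack{n\in N\\ n^{-1}g^{-1}xgn\in M}}\chi(n^{-1}g^{-1}xgn),
\]
and re-indexing the sum by $n\mapsto g n g^{-1}$ identifies this with $\Ind_{\leftexp{g}{M}}^N(\leftexp{g}{\chi})$ evaluated at $x$. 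Then, tensoring with $\psi|_N$, the projection formula $(\Ind_{\leftexp{g}{M}}^N\eta)\cdot\psi|_N=\Ind_{\leftexp{g}{M}}^N(\eta\cdot\psi|_{\leftexp{g}{M}})$ (valid because $\psi|_N$ extends a linear character of $N$ hence its restriction to the subgroup $\leftexp{g}{M}$ makes sense) gives the claimed identity $\leftexp{g}{(\Ind_M^N\chi)}\psi|_N=\Ind_{\leftexp{g}{M}}^N(\leftexp{g}{\chi}\,\psi|_{\leftexp{g}{M}})$. This is the part the paper already flags can be extracted ``as in the proof of \cite[Lemma~8.6(b)]{hrumar2015definable}''.

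For the second assertion, I would argue as follows. Since twisting is a bijection on $\Irr(N)$ and $\Ind_{M'}^N\chi'$ is irreducible, we have $(\Ind_M^N\chi)\psi|_N=\Ind_{M'}^N\chi'$ if and only if the irreducible characters $(\Ind_M^N\chi)\psi|_N$ and $\Ind_{M'}^N\chi'$ of $N$ coincide, equivalently have nonzero inner product. By the first identity, $(\Ind_M^N\chi)\psi|_N=\Ind_M^N(\chi\,\psi|_M)$, so the condition becomes $\langle \Ind_M^N(\chi\,\psi|_M),\Ind_{M'}^N\chi'\rangle_N\neq 0$. Now apply Mackey's intertwining-number formula for the pair of subgroups $M,M'$ of $N$:
\[
\langle \Ind_M^N(\chi\,\psi|_M),\Ind_{M'}^N\chi'\rangle_N=\sum_{M'\backslash N/M}\big\langle \Res^{\leftexp{g}{M}}_{\leftexp{g}{M}\cap M'}(\leftexp{g}{(\chi\,\psi|_M)}),\ \Res^{M'}_{\leftexp{g}{M}\cap M'}\chi'\big\rangle_{\leftexp{g}{M}\cap M'},
\]
where $g$ ranges over a set of double coset representatives. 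Each inner summand is an inner product of two degree-one characters of the finite group $\leftexp{g}{M}\cap M'$, hence is either $0$ or $1$, and it equals $1$ precisely when the two linear characters are equal. Using $\leftexp{g}{(\chi\,\psi|_M)}=\leftexp{g}{\chi}\cdot\psi|_{\leftexp{g}{M}}$ (as $\psi$ is a character of the ambient $G$, conjugation by $g\in N\leq G$ only moves the restriction), the summand for $g$ is $1$ iff $(\Res^{\leftexp{g}{M}}_{\leftexp{g}{M}\cap M'}\leftexp{g}{\chi})\,\psi|_{\leftexp{g}{M}\cap M'}=\Res^{M'}_{\leftexp{g}{M}\cap M'}\chi'$. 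Therefore the total inner product is nonzero iff this equality holds for some $g\in N$, which is exactly the claimed criterion.

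I do not anticipate a genuine obstacle here — the statement is a packaging of well-known facts. The one point requiring mild care is bookkeeping of where the twisting character lives: $\psi\in\Lin(G)$, and throughout we use that its restriction to any subgroup of $G$ (in particular to $\leftexp{g}{M}$, $\leftexp{g}{M}\cap M'$, or $N$) is again a linear character, and that restriction commutes with conjugation by elements of $G$ in the sense $\leftexp{g}{(\psi|_M)}=\psi|_{\leftexp{g}{M}}$ since $\psi$ is $G$-invariant under conjugation (being a homomorphism on $G$). A second minor point is that one should note it suffices to let $g$ range over $N$ rather than over double-coset representatives: if the equality holds for some $g\in N$ then the corresponding Mackey summand is nonzero, and conversely a nonzero summand gives such a $g$; replacing $g$ by $ng$ or $gn'$ with $n\in M'$, $n'\in M$ only conjugates/re-restricts consistently, so the existential quantifier over all of $N$ is equivalent. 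With these remarks in place the proof is a direct citation of Mackey's formula and the projection formula, so I would keep it brief.
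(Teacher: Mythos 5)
Your proof is correct, and it is precisely the argument the paper alludes to without spelling out: the paper remarks that the lemma ``can be proved using the fact that twisting by degree one characters and induction are compatible'' and cites the proof of Lemma~8.6(b) of Hrushovski--Martin--Rideau, which is the same combination of the projection formula for induced characters and Mackey's intertwining-number formula that you use. Your treatment of the bookkeeping (that $\psi$, being a homomorphism on $G$, is conjugation-invariant so $\leftexp{g}{(\psi|_M)}=\psi|_{\leftexp{g}{M}}$, and that the existential over $g\in N$ is equivalent to the existential over double-coset representatives) is exactly the care the argument requires, so no gap remains.
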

     \begin{prop}
	\label{pro:X_definable_tw}
	Let $\cD^{c,c'}$ be the set of pairs 
	$(\tuple{\lambda}, \tuple{\xi}) \in \cD^{c}$ with the property that, for $(H, \chi) = \Psi (\tuple{\lambda}, \tuple{\xi})$, 
	$\chi$ induces to a character $\theta$ of $N$ such that $\tic{\theta} \in \twirrparams$. Then $\cD^{c,c'}$ is a definable 
	subset of $\Q_p^{d(d+r+1)}$
	 in $\man$.
	\end{prop}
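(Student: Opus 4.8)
The plan is to express membership in $\cD^{c,c'}$ as a conjunction of three conditions, each of which we have already shown (or can readily show) to be $\Lan$-definable, and then invoke the fact that definable sets are closed under finite Boolean combinations and the definable interpretation of $\struc_N$ in $\man$. Recall that $\tic{\theta} \in \twirrparams$ unwinds, by definition, to the four requirements: (a) $L = L_{\tic{\theta}}$ and $K = K_{\tic{\theta}}$; (b) $\Gamma_{K,\tic{\theta}} = \Gamma$; (c) $\cC_{K_p}(\tic{\theta}) = c$; and (d) $\cT_{L_p,K_p,\Gamma_p}(\tic{\theta}) = c'$. We already know from \cite[\propDc]{Rationality1} that the condition ``$(\tuple{\lambda},\tuple{\xi}) \in \cD^c$'' is definable, and this is exactly $\Ind_{N\cap H}^N\chi \in \Irr_K(N)$ together with $\cC(H,\chi) = c$; in particular it encodes $K \leq K_{\tic{\theta}}$ and item (c). So starting from $\cD^c$, what remains is to cut out the locus where, in addition, $K_{\tic{\theta}} = K$ exactly (not just $\geq K$), where $L_{\tic{\theta}} = L$, where $\Gamma_{K,\tic{\theta}} = \Gamma$, and where $\cT_{L_p,K_p,\Gamma_p}(\tic{\theta}) = c'$.

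First I would handle the stabiliser conditions $K_{\tic{\theta}} = K$ and $L_{\tic{\theta}} = L$. Since $\theta = \Ind_{N\cap H}^N\chi$, the condition $g \in K_{\tic{\theta}} = \Stab_G(\theta)$ says $\leftexp{g}{\theta} \cong \theta$, and $g \in L_{\tic{\theta}}$ says $\leftexp{g}{\theta} \cong \theta\psi|_N$ for some $\psi \in \Lin(G)$; using Lemma~\ref{lem:conj_induced_tw} these become conditions about restrictions of $\chi$ to intersections $\leftexp{g}{(N\cap H)} \cap (N\cap H)$ after conjugating, which are $\Lan$-conditions once we express elements of $N$ via the interpretation of $\struc_N$, quantify $g$ over the finite transversal $y_1,\dots,y_\indexGN$ together with an element of $N$, and parametrise $\psi$ by the definable set $\cD^G_{K_p}$ of Proposition~\ref{pro:X_definable-lin}. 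Requiring $\varphi_i \in C_K$ to fix $\tic\theta$ (resp.\ $\theta$) for $i \leq \indexKpN$, resp.\ $i \leq \indexLpN$, and requiring that no $y_i$ with $i > \indexKpN$ (resp.\ $i > \indexLpN$) does, pins down $K_{\tic\theta}=K$ and $L_{\tic\theta}=L$ exactly (up to replacing $K, L$ by appropriate conjugates, which is harmless as we range over all $K \in \cS$, $L \in \widetilde\cS$). Next, the condition $\Gamma_{K,\tic{\theta}} = \Gamma$ is dealt with via Proposition~\ref{prop:red_Gamma}, which (since we have assumed $\twirrparams \neq \emptyset$, so a $\tic\theta'$ with $\Gamma_{K,\tic\theta'} = \Gamma$ exists) reduces it to $\Gamma_{K_p,\tic{\theta}} = \Gamma_p$; and ``$\iota\circ\bar\nu \in \Gamma_{K_p,\tic\theta}$'' is definable in the $\tuple{\nu}$-variables jointly with $(\tuple\lambda,\tuple\xi)$ by Lemma~\ref{lem:def_Gamma}. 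Saying $\Gamma_{K_p,\tic\theta} = \Gamma_p$ is then the statement that $\cD_{K_p}(\tuple\lambda,\tuple\xi)$ equals the fixed definable set parametrising $\Gamma_p$ inside $\cD_{K_p/N}$, which is an $\Lan$-condition (double inclusion, each a universally/existentially quantified definable predicate).

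Finally, the condition $\cT_{L_p,K_p,\Gamma_p}(\tic\theta) = c'$ is handled by Proposition~\ref{prop:Linearisation_twist}: fixing a cocycle $\mu \in \onecocyp$ representing $c'$ (a fixed piece of data, independent of $(\tuple\lambda,\tuple\xi)$), this predicate becomes the assertion that there exists $\delta \in \onecobop$ such that for each $k = 1,\dots,\indexLpN$ there exist $i$, $n$, $\psi$, $\nu$ with the implication $\mathbf{A}_{ij}\wedge\mathbf{A}_{kj} \Rightarrow \mathbf{B}_{ijk}$ holding for all $j$ and all $n' \in N$. Here $\delta$ is quantified using the definable set $\widetilde{\cB}$ of Lemma~\ref{lem:schur_def_sets-twist}, $\psi$ using $\cD^G_{K_p}$, $\nu$ using $\cD_{K_p}(\tuple\lambda,\tuple\xi)$ restricted to the image $\Gamma_p$ (equivalently, after the previous step, using $\cD_{K_p/N}$ cut down to $\Gamma_p$), and $n, n'$ range over $N$ via the interpretation of $\struc_N$; the predicates $\mathbf{A}_{ij}$ and $\mathbf{B}_{ijk}$ were written out explicitly and involve only elements of $N$, the fixed automorphisms $\varphi_i$, the fixed constants $t_i, d_{ij}, \kappa(i,j)$, and values of $\chi, \psi, \nu, \mu, \delta$ — all of which are $\Lan$-expressible. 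Conjoining the four definable conditions with membership in $\cD^c$ yields $\cD^{c,c'}$ as a definable subset of $\Q_p^{d(d+r+1)}$, completing the proof. The main obstacle, and where care is needed, is the bookkeeping in the stabiliser conditions: one must phrase ``$g$ stabilises $\tic\theta$'' purely in terms of restrictions of the linear character $\chi$ (via Lemma~\ref{lem:conj_induced_tw} and Lemma~\ref{lem:cond_H}) so that everything lands inside the analytic Denef–Pas language, and one must make sure the ``exactly $K$'' and ``exactly $L$'' clauses are imposed correctly against the fixed transversal rather than merely ``$\supseteq$''.
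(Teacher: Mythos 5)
Your proposal follows essentially the same route as the paper's proof: since $(\tuple{\lambda},\tuple{\xi})\in\cD^c$ already encodes $K=K_{\tic{\theta}}$ (via the condition $\Ind_{N\cap H}^N(\iota\circ\chi)\in\Irr_K(N)$ in \propDc) and $\cC_{K_p}(\tic{\theta})=c$, the paper reduces to the three conditions $\Stab_G(\tic{\theta})=L$, $\Gamma_{K,\tic{\theta}}=\Gamma$, and $\cT_{L_p,K_p,\Gamma_p}(\tic{\theta})=c'$, and establishes definability of each exactly as you do — via Lemma~\ref{lem:conj_induced_tw} and $\cD^G$ for the stabiliser, via Proposition~\ref{prop:red_Gamma} with Lemma~\ref{lem:def_Gamma} and a finite set of coset representatives for the $\Gamma$-condition, and via Proposition~\ref{prop:Linearisation_twist} together with $\widetilde{\cB}$, $\cD^G_{K_p}$ and $\cD_{K_p}(\tuple{\lambda},\tuple{\xi})$ for the $\cT$-condition. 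The only slight imprecision in your writeup is treating ``$K_{\tic{\theta}}=K$ exactly'' as an extra condition to cut out, when it is already built into the definition of $\cD^c$; this does not affect the argument.
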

	\begin{proof}
	Since $(\tuple{\lambda}, \tuple{\xi}) \in \cD^{c}$, we have that $\tic{\theta} \in \twirrparams$ if and only if
		\begin{enumerate}
			\item \label{pro:X_definable_tw_1}
				$\Stab_G( \tic{\theta} ) = L$
			\item \label{pro:X_definable_tw_2}
				$\Gamma_{K, \tic{\theta}} = \Gamma$
			\item \label{pro:X_definable_tw_3}
			$\cT_{L_p, K_p, \Gamma_p}(\tic{\theta}) = c'$.
		\end{enumerate}
	Let $\indexLN = \lvert L : N \rvert$. Up to reordering $(y_1,\dots, y_\indexGN)$, we may assume that 
			\[
			(y_1,\dots,y_\indexKpN, y_{\indexKpN +1}, \dots, y_\indexLN)
		\]
	is a left transversal of $N$ in $L$. We let $\varphi_i: N \to N$ be the the conjugation by $y_i$ for 
	$i = \{1, \dots, \indexLN\}$ and we define
		\[
			C_L = \{ \varphi_i \mid i = 1, \dots, \indexLN \}.
		\]
	By Lemma~\ref{lem:conj_induced_tw}, 
	$\Stab_G(\tic{\theta}) = L $ if and only the following statement holds:
		\begin{multline}
		\label{eq:stab_is_L}
			\forall\, \varphi \in C_G\ : \\
					\Big(\exists \psi\in \Lin(G) \big( (\Ind_{N\cap H}^{N}\chi) \psi\lvert_N = 
						\Ind_{\varphi (N\cap H)}^{N}\chi \circ\varphi^{-1} \big )
							\Longleftrightarrow \varphi \in C_L \Big).
	\end{multline}
	 Fix $\varphi \in C_G$. Lemma~\ref{lem:conj_induced_tw} with 
	$M = N \cap H$, $M' = \varphi(N \cap H)$ and $\chi' = \chi \circ\varphi^{-1}$ implies that 
	$(\Ind_{N\cap H}^{N}\chi)  \psi\lvert_N=\Ind_{\varphi(N\cap H)}^{N}\chi \circ\varphi^{-1}$ if and only if
		\begin{multline*}
			\exists\, g \in N,\ \forall\, h\in N\cap H: 
					 \big(\leftexp{g}{h} \in \varphi(N \cap H)\Longrightarrow 
							  \chi(h) \psi(h)
					= \chi \circ \varphi^{-1}(\leftexp{g}{h})\big).
		\end{multline*}
	We interpret $\exists \psi \in \Lin(G)$ as
			\[
				\exists (\psi_1, \dots, \psi_d) \in \cD^{G},
			\]
	using $(\psi_1, \dots, \psi_d)$ to express $\psi$ on $N$. We interpret multiplication in $W_{(p)}$ 
	as addition in $\Q_p/\Z_p$ and equality in $W_{(p)}$ as equality in $\Q_p / \Z_p$. Substituting in \eqref{eq:stab_is_L} shows that 
	there is an $\Lan$-condition on $(\tuple{\lambda}, \tuple{\xi})$ expressing $\Stab_G (\tic{\theta}) = L$.\par
	Next we show how to express $\Gamma_{K, \tic{\theta}} = \Gamma$ with an $\Lan$-condition on $(\tuple{\lambda}, \tuple{\xi})$. 
	First of all we notice that, by Proposition~\ref{prop:red_Gamma}, 
	$\Gamma_{\Kp, \tic{\theta}} = \Gamma_p$ if and only if $\Gamma_{K, \tic{\theta}} = \Gamma$. Thus it suffices to show that 
	$\Gamma_{\Kp, \tic{\theta}} = \Gamma_p$ gives rise to an $\Lan$-condition on $(\tuple{\lambda}, \tuple{\xi})$. 
	This is done using the definable set $\cD_{\Kp}(\tuple{\lambda}, \tuple{\xi})$ 
	in Lemma~\ref{lem:def_Gamma}. Indeed, $\Gamma_{\Kp, \tic{\theta}} = \Gamma_p$ if and only if, for all $\nu \in \Lin(\Kp/N)$, 
		\begin{equation}
			\label{eq:Gamma_Kp_is_Gamma_p}
			\nu \in \Gamma_{\Kp, \tic{\theta}} \iff  \nu \in \Gamma_p
		\end{equation}
	Let $\mathcal{Q}_{\Kp/N}$ be a set of representatives of the equivalence classes $\bmod$ $\Z_p$ in $\cD_{\Kp/N}$. 
	The set $\mathcal{Q}_{\Kp/N}$ is finite and therefore it is definable in $\man$. Notice that for all $\nu \in \Lin(\Kp/N)$, the 
	set $\mathcal{Q}_{\Kp/N}$ contains a unique tuple $\tuple{\nu} \in \cD_{\Kp/N}$ such that $\iota^{-1} \circ \nu$ is the function
		\[
			\Kp/N \longrightarrow \Q_p/\Z_p, \qquad y_i N \longmapsto \nu_i + \Z_p.
		\] 
	Let $\tuple{\Gamma}_p$ be the subset of $\mathcal{Q}_{\Kp/N}$ consisting of the tuples that correspond to 
	the homomorphisms in $\Gamma_p$.
	This allows us to express \eqref{eq:Gamma_Kp_is_Gamma_p}
	as an $\Lan$-condition on $(\tuple{\lambda}, \tuple{\xi})$, namely
		\[
			\forall \tuple{\nu} \in \mathcal{Q}_{\Kp/N}: \tuple{\nu} \in \cD_{\Kp}(\tuple{\lambda}, \tuple{\xi}) \iff \tuple{\nu} \in \tuple{\Gamma}_p.
		\]
	We prove that \ref{pro:X_definable_tw_3} is given by an $\Lan$-condition (on $(\tuple{\lambda}, \tuple{\xi})$). Fix
	$\mu\in \onecocyp$	such that, the $1$-cocycle on $\Lp/N$ defined by $g \mapsto \mu(g N)\Gammap$
	is in the class $c'$. By Proposition~\ref{prop:Linearisation_twist}, 
	$\cT_{L_p, K_p, \Gamma_p}(\tic{\theta}) = c'$ if and only if
		\begin{multline*}
			\exists \delta\in \onecobop: 
				\bigwedge_{k \in \{ 1, \dots, \indexLpN\}}  \exists n \in N\, \exists \psi \in \Lin(G)\, \exists \nu \in \Gamma_p : \\
			\bigvee_{i \in \{ 1, \dots, \indexKpN\}}
				\Big( \mathbf{A}_{kj}(H, \chi, 1, n') \wedge \mathbf{A}_{ij}(H, \chi, n, n') \Longrightarrow
				\mathbf{B}_{ijk}(H, \chi, \psi, \nu, n, n')\Big).
		\end{multline*}
	Now it suffices to write the last predicate as $\Lan$-condition on $(\tuple{\lambda} , \tuple{\xi})$:
	\begin{itemize}
		\item[-] we use the interpretation of $\struc_N$ in $\man$ to express elements and group operations in $N$. 
		\item[-] We use $\tuple{\xi}$ to express the values of $\chi$, as explained in \cite[\propDc]{Rationality1}.
		\item[-] We interpret the predicate $\exists \delta\in \onecobop$
		as $\exists \tuple{\delta} \in \tic{\cB}$ and we use $\delta_{k\, {\kappa(i,j)}}$ to express the value 
		$\delta(y_k N)(y_{\kappa(i,j)} N)$. 
		\item[-] By \ref{pro:X_definable_tw_2} we replace $\exists \nu \in \Gamma_p$ with 
		$\exists \nu \in \Gamma_{\Kp, \tic{\theta}}$ and we interpret the latter as $\exists \tuple{\nu} \in 
		\cD_{\Kp}(\tuple{\lambda}, \tuple{\xi})$, using $(\nu_1, \dots, \nu_\indexKpN)$ to express the 
		values of $\nu$. 
		\item[-] We interpret $\exists \psi \in \Lin(G)$ as
			\[
				\exists (\tau_1, \dots, \tau_d, \sigma_1, \dots, \sigma_\indexKpN) \in \cD_{\Kp}^{G},
			\]
		using $(\tau_1, \dots, \tau_d)$ to express $\psi$ on $N$ and $(\sigma_1, \dots, \sigma_\indexKpN)$ 
		to express $\psi(y_1),\dots,\allowbreak \psi(y_\indexKpN)$.
		\item[-] We interpret multiplication and equality in $W_{(p)}$ via $\iota$. 
	\end{itemize}
	This concludes the proof because 
	the sets $\tic{\cB}$, $\cD_{\Kp}(\tuple{\lambda}, \tuple{\xi})$, and $\cD_{\Kp}^{G}$ are definable in $\man$.
	\end{proof}
Proposition~\ref{pro:X_definable_tw} shows that $\Psi: (\tuple{\lambda}, \tuple{\xi})\mapsto (H, \chi)$ is a surjection from $\cD^{c,c'}$ to the set of pairs $(H, \chi)\in X_K$ such that $\theta = \Ind_{N\cap H}^N \chi$ satisfies $\tic{\theta}\in \twirrparams$.

\subsection{Finishing the proof of Theorem~\ref{thm:Main-twist}}
We write the partial zeta series as a generating function enumerating the equivalence classes of a family 
of definable equivalence relations. We conclude rationality of the partial twist zeta series
by \cite[\thmRationalSeries]{Rationality1}. Theorem~\ref{thm:Main-twist} then follows from 
Proposition~\ref{prop:partial-Main-twist}.\par
We start by constructing a definable equivalence 
relation on $\cD^{c,c'}$ whose equivalence classes will be in bijection with 
$\twirrparams$. 
Let $(\tuple{\lambda}, \tuple{\xi}), (\tuple{\lambda}', \tuple{\xi}') \in \cD^{c,c'}$ and
let $(H, \chi) = \Psi (\tuple{\lambda}, \tuple{\xi})$ and $(H', \chi') = \Psi (\tuple{\lambda}', \tuple{\xi}')$. We define an 
equivalence relation $\widetilde{\eqrel}$ on $\cD^{c,c'}$ by 
	\[
		((\tuple{\lambda}, \tuple{\xi}), (\tuple{\lambda}', \tuple{\xi}')) \in \widetilde{\eqrel}  \Longleftrightarrow 
	     \exists\, \psi\in \Lin(G),\  \mathrm{Ind}_{N\cap H}^N \chi = \mathrm{Ind}_{N\cap H'}^N (\chi'\psi|_N).
	\]
	\begin{lem}
	The relation $\widetilde{\eqrel}$ is definable in $\man$.
	\end{lem}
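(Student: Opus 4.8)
The plan is to first reduce the condition defining $\widetilde{\eqrel}$ to a first-order statement over $N$ and $\Lin(G)$, using Lemma~\ref{lem:conj_induced_tw}, and then to translate that statement into an $\Lan$-condition on the parametrising tuples by means of the definable sets already constructed in this section. Concretely, write $(H,\chi)=\Psi(\tuple{\lambda},\tuple{\xi})$ and $(H',\chi')=\Psi(\tuple{\lambda}',\tuple{\xi}')$, so that by construction both $\Ind_{N\cap H}^N\chi$ and $\Ind_{N\cap H'}^N\chi'$ are irreducible. Since $\Ind_{N\cap H'}^N(\chi'\psi|_N)=(\Ind_{N\cap H'}^N\chi')\psi|_N$ for every $\psi\in\Lin(G)$, the relation $\Ind_{N\cap H}^N\chi=\Ind_{N\cap H'}^N(\chi'\psi|_N)$ is equivalent to $(\Ind_{N\cap H'}^N\chi')\psi|_N=\Ind_{N\cap H}^N\chi$. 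Applying the ``moreover'' part of Lemma~\ref{lem:conj_induced_tw} with $M=N\cap H'$ and $M'=N\cap H$ (the two degree one characters being $\chi'$ and $\chi$ respectively) then shows that $((\tuple{\lambda},\tuple{\xi}),(\tuple{\lambda}',\tuple{\xi}'))\in\widetilde{\eqrel}$ if and only if there exist $\psi\in\Lin(G)$ and $g\in N$ such that
\[
\chi'(g^{-1}hg)\,\psi(h)=\chi(h)\qquad\text{for all }h\in\leftexp{g}{(N\cap H')}\cap(N\cap H).
\]

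It remains to verify that this last condition is an $\Lan$-condition on the coordinates of $(\tuple{\lambda},\tuple{\xi})$ and $(\tuple{\lambda}',\tuple{\xi}')$. Elements of $N$ and its group operations are handled by the definable interpretation of $\struc_N$ in $\man$ (\cite[\lemInterpret]{Rationality1}), and the element $g\in N$ is quantified over accordingly. The subgroups $N\cap H$ and $N\cap H'$ are generated by the good bases read off from $\tuple{\lambda}$ and $\tuple{\lambda}'$, and membership in them --- as well as in the conjugate $\leftexp{g}{(N\cap H')}$, conjugation being a term --- is definable exactly as in the proof of \propDc\ in \cite{Rationality1}. The values of $\chi$ on $N\cap H$ and of $\chi'$ on $N\cap H'$ are expressed through $\tuple{\xi}$ and $\tuple{\xi}'$ as in \propDc, using that $\chi$ and $\chi'$ are homomorphisms with values in $W_{(p)}$, which we identify with $\Q_p/\Z_p$ via $\iota$, and are therefore determined by their values on the good basis; the value $\chi'(g^{-1}hg)$ is obtained once $g^{-1}hg$ has been checked to lie in $N\cap H'$. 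The quantifier $\exists\psi\in\Lin(G)$ is interpreted as $\exists\tuple{\tau}\in\cD^G$, with $\tuple{\tau}$ encoding $\psi|_N$; this is legitimate because only the restriction $\psi|_N$ occurs in the displayed condition, and $\cD^G$ was shown above to be definable. Finally, multiplication and equality in $W_{(p)}$ are rendered as addition and equality in $\Q_p/\Z_p$ via $\iota$. Assembling these translations turns the displayed condition into an $\Lan$-formula in the coordinates of the two tuples, so $\widetilde{\eqrel}$ is definable in $\man$.

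The only step requiring genuine care is the bookkeeping in the second paragraph: expressing membership in $N\cap H$, $N\cap H'$ and the conjugate $\leftexp{g}{(N\cap H')}$, and evaluating $\chi$ and $\chi'$ on arbitrary elements of these subgroups, all relative to the fixed generators $n_1,\dots,n_d$ of $N$ rather than to the good bases themselves. This is, however, an instance of constructions already carried out in \cite{Rationality1} (notably in establishing \propDc), so no new idea is needed beyond invoking that machinery.
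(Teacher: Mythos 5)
Your proof is correct and follows essentially the same strategy as the paper's: invoke Lemma~\ref{lem:conj_induced_tw} to reduce the definition of $\widetilde{\eqrel}$ to a first-order condition quantifying only over $N$ and $\Lin(G)$, then translate via the interpretation of $\struc_N$ in $\man$, the parametrisation of $\psi|_N$ by $\cD^G$, and the usual $\iota$-bookkeeping for $W_{(p)}$. The only (immaterial) difference is that you apply the ``moreover'' part of the lemma with $M=N\cap H'$ and $M'=N\cap H$, yielding the equivalent condition $\chi'(g^{-1}hg)\psi(h)=\chi(h)$ on $\leftexp{g}{(N\cap H')}\cap(N\cap H)$, whereas the paper phrases it with the roles swapped.
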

		\begin{proof}
		Let $(H, \chi), (H', \chi')$ be as above. By Lemma~\ref{lem:conj_induced_tw}, we have that
		$\Ind_{N\cap H}^N \chi = \allowbreak \Ind_{N\cap H'}^N \chi'(\psi|_N)$ for some $\psi \in \Lin(G)$ if and only if 
			\[
			   \exists\, \psi \in \Lin(G),\    \exists\, g \in N,\ \forall\, h\in N \cap H\ \left(\leftexp{g}{h} \in N \cap H' \Longrightarrow \chi(h) = (\chi'\psi|_N)(\leftexp{g}{h})\right).
			\]
	Using Proposition~\ref{pro:X_definable-lin} to parametrise $\psi|_N$ for $\psi\in \Lin(G)$ by points 
	in $\cD_\Kp^G$ and writing the above formula in the $\Z_p$-coordinates of $N$ we obtain an $\Lan$-formula defining 
	$\widetilde{\eqrel}$. Note that, as done before, we interpret multiplication and equality in $W_{(p)}$ 
	via $\iota^{-1}$.
		\end{proof}
Composing $\Psi$  with the surjective map $X_K\rightarrow \Irr_K(N)$ of \cite[\corSurjCoho]{Rationality1} 
induces a bijection between the set of equivalence classes $\cD^{c,c'}/\widetilde{\eqrel}$ and  $\twirrparams$. We now use 
this bijection to produce a definable family of equivalence relations giving the partial zeta series. 
For $(\tuple{\lambda}, \tuple{\xi}) \in \cD^{c,c'}$,
write $(h_1(\tuple{\lambda}), \dots, h_d(\tuple{\lambda}))$ for the good basis associated 
with $\tuple{\lambda}$ by \cite[\propDc]{Rationality1}.\par 
Let $\N_0$ denote the set of non-negative integers. The function $f:\cD^{c,c'}\rightarrow \N_0$ 
given by
	\[
	 (\tuple{\lambda}, \tuple{\xi}) \longmapsto \sum_{i = 1}^{d} \big(\omega(h_i(\tuple{\lambda})) - 1\big)
	\]
is definable in $\man$, as $\omega$ is interpreted as a definable function in $\man$ by \cite[\lemInterpret]{Rationality1}. 
If $\Psi(\tuple{\lambda}, \tuple{\xi}) = (H, \chi)$, 
then $p^{f(\tuple{\lambda}, \tuple{\xi})}$ is the degree of $\Ind_{N\cap H}^{N} \chi$.\par
%
The graph of $f$ gives a definable family 
$D^{c,c'}$ of subsets of  $\Q_p^{d(d+r+1)}$ and the equivalence relation 
$E^{c,c'} \subseteq D^{c,c'} \times D^{c,c'}$ defined by 
	\[
		((x, n), (x',n')) \in E^{c,c'} \iff (x,x') \in \widetilde{\eqrel}
	\] 
is a definable family of equivalence relations on $D^{c,c'}$. Note that, 
$n = n'$ whenever $(x,x') \in \widetilde{\eqrel}$ and 
so it is not necessary to add this condition to the definition of $E^{c,c'}$.\par
%
Since, for all $n\in \N_0$, the fibre $f^{-1}(n)$ is a union of equivalence classes for $\widetilde{\eqrel}$, 
the set $D^{c,c'}_n/E^{c,c'}_n$ is in bijection with the subset of characters of degree $p^n$ in $\twirrparams$.
It follows that
	\[
		\tpartial{N; L, K, \Gamma}{c, c'} = \sum_{n \in \N_0} \#(D^{c,c'}_n/E^{c,c'}_n) p^{-ns}.
	\]
Applying \cite[\thmRationalSeries]{Rationality1} to the series above
we deduce that $\tpartial{N; L, K, \Gamma}{c, c'}$ is a rational function in $p^{-s}$. This concludes the proof.

\begin{acknowledgement*}
The second author was supported by Project G.0792.18N of the Research Foundation - Flanders (FWO), 
by the Hausdorff Research Institute for Mathematics (Universit\"at Bonn), by the University of Auckland, and by Imperial College London.

The work on this paper was supported by a Durham University Travel Grant and LMS Scheme 4 grant 41678.
\end{acknowledgement*}
\bibliographystyle{alex}
\bibliography{IEEEfull,alex}

\def\cprime{$'$}
\begin{thebibliography}{10}

\bibitem{Aizenbud-Avni-2016}
A.~Aizenbud and N.~Avni, \emph{Representation growth and rational singularities
  of the moduli space of local systems}, Invent. Math. \textbf{204} (2016),
  no.~1, 245--316.

\bibitem{AKOV-Duke}
N.~Avni, B.~Klopsch, U.~Onn, and C.~Voll, \emph{Representation zeta functions
  of compact {$p$}-adic analytic groups and arithmetic groups}, Duke Math. J.
  \textbf{162} (2013), no.~1, 111--197.

\bibitem{AKOV-GAFA}
N.~Avni, B.~Klopsch, U.~Onn, and C.~Voll, \emph{Arithmetic groups, base change,
  and representation growth}, Geom. Funct. Anal. \textbf{26} (2016), no.~1,
  67--135.

\bibitem{AKOV-ProcLMS}
N.~Avni, B.~Klopsch, U.~Onn, and C.~Voll, \emph{Similarity classes of integral
  {$\mathfrak{p}$}-adic matrices and representation zeta functions of groups of
  type {$\mathsf{A}_2$}}, Proc. Lond. Math. Soc. (3) \textbf{112} (2016),
  no.~2, 267--350.

\bibitem{bud2021rational}
N.~Budur, \emph{{Rational Singularities, Quiver Moment Maps, and
  Representations of Surface Groups}}, International Mathematics Research
  Notices \textbf{2021} (2019), no.~15, 11782--11817.

\bibitem{Curtis_Reiner}
C.~W. Curtis and I.~Reiner, \emph{Representation Theory of Finite Groups and
  Associative Algebras}, Pure and Applied Mathematics, Vol. XI, Interscience
  Publishers, New York-London, 1962.

\bibitem{DdSMS}
J.~D. Dixon, M.~P.~F. du~Sautoy, A.~Mann, and D.~Segal, \emph{Analytic
  pro-{$p$} groups}, volume~61 of \emph{Cambridge Studies in Advanced
  Mathematics}, Cambridge University Press, Cambridge, second edition, 1999.

\bibitem{dunvol2017uniform}
D.~H. Dung and C.~Voll, \emph{Uniform analytic properties of representation
  zeta functions of finitely generated nilpotent groups}, Trans. Amer. Math.
  Soc. \textbf{369} (2017), no.~9, 6327--6349.

\bibitem{Hasa-Stasinski}
J.~H\"{a}s\"{a} and A.~Stasinski, \emph{Representation growth of compact linear
  groups}, Trans. Amer. Math. Soc. \textbf{372} (2019), no.~2, 925--980.

\bibitem{hrumar2015definable}
E.~Hrushovski, B.~Martin, and S.~Rideau, \emph{Definable equivalence relations
  and zeta functions of groups (with an appendix by {R}af {C}luckers)}, J. Eur.
  Math. Soc. (JEMS) \textbf{20} (2018), no.~10, 2467--2537.

\bibitem{Isaacs}
I.~M. Isaacs, \emph{Character Theory of Finite Groups}, Pure and Applied
  Mathematics, No. 69, Academic Press, New York, 1976.

\bibitem{Jaikin-zeta}
A.~Jaikin-Zapirain, \emph{Zeta function of representations of compact
  {$p$}-adic analytic groups}, J. Amer. Math. Soc. \textbf{19} (2006), no.~1,
  91--118.

\bibitem{Karpilovsky2}
G.~Karpilovsky, \emph{Group representations. {V}ol. 2}, volume 177 of
  \emph{North-Holland Mathematics Studies}, North-Holland Publishing Co.,
  Amsterdam, 1993.

\bibitem{Karpilovsky3}
G.~Karpilovsky, \emph{Group representations. {V}ol. 3}, volume 180 of
  \emph{North-Holland Mathematics Studies}, North-Holland Publishing Co.,
  Amsterdam, 1994.

\bibitem{Larsen-Lubotzky}
M.~Larsen and A.~Lubotzky, \emph{Representation growth of linear groups}, J.
  Eur. Math. Soc. \textbf{10} (2008), no.~2, 351--390.

\bibitem{Lubotzky-Martin}
A.~Lubotzky and B.~Martin, \emph{Polynomial representation growth and the
  congruence subgroup problem}, Israel J. Math. \textbf{144} (2004), 293--316.

\bibitem{duSautoy-Segal-in-Horizons}
M.~du~Sautoy and D.~Segal, \emph{Zeta functions of groups}, in \emph{New
  horizons in pro-{$p$} groups}, volume 184 of \emph{Progr. Math.}, pp.
  249--286, Birkh\"auser Boston, Boston, MA, 2000.

\bibitem{duSautoy-rationality}
M.~P.~F. du~Sautoy, \emph{Finitely generated groups, {$p$}-adic analytic groups
  and {P}oincar\'e series}, Ann. of Math. (2) \textbf{137} (1993), no.~3,
  639--670.

\bibitem{serre}
J.-P. Serre, \emph{Local Fields}, Springer-Verlag, New York, 1979.

\bibitem{Stasinski-Voll-Tgrps}
A.~Stasinski and C.~Voll, \emph{Representation zeta functions of nilpotent
  groups and generating functions for {W}eyl groups of type {$B$}}, Amer. J.
  Math. \textbf{136} (2014), no.~2, 501--550.

\bibitem{Rationality1}
A.~Stasinski and M.~Zordan, \emph{Rationality of representation zeta functions
  of compact $p$-adic analytic groups}, arXiv:2007.10694 to appear in Amer.\
  J.\ Math.\  (2020).

\bibitem{zor2016adjoint}
M.~Zordan, \emph{Adjoint orbits of matrix groups over finite quotients of
  compact discrete valuation rings and representation zeta functions}, Indiana
  Univ. Math. J. \textbf{67} (2018), 1683--1709.

\bibitem{zor2022univariate}
M.~Zordan, \emph{Univariate and bivariate zeta functions of unipotent group
  schemes of type {G}}, International Journal of Algebra and Computation
  \textbf{32} (2022), no.~04, 653--682.

\end{thebibliography}

\end{document}